\newtheorem{prop}{Proposition}
\newtheorem{theo}[prop]{Theorem}
\newtheorem*{theo*}{Theorem}
\newtheorem{lemm}[prop]{Lemma}
\newtheorem{coro}[prop]{Corollary}
\newtheorem*{claim}{Claim}
\theoremstyle{definition}
\newtheorem{rema}[prop]{Remark}
\newtheorem{defi}[prop]{Definition}
\newcommand{\NN}{\mathbf{N}}
\newcommand{\RR}{\mathbf{R}}
\newcommand{\cA}{\mathcal A}
\newcommand{\cB}{\mathcal B}
\newcommand{\cH}{\mathcal H}
\newcommand{\cR}{\mathcal R}
\newcommand{\cS}{\mathcal S}
\def\fA{\mathfrak{A}}
\def\fB{\mathfrak{B}}
\DeclareMathOperator{\supp}{supp}
\DeclareMathOperator{\Vol}{Vol}
\DeclareMathOperator{\ind}{index}
\DeclareMathOperator{\Ric}{Ric}
\DeclareMathOperator{\Div}{div}
\newcommand{\bangle}[1]{\left\langle #1 \right\rangle}
\newcommand{\eps}{\varepsilon}
\begin{document}

\title{Stable minimal hypersurfaces in $\mathbf{R}^{4}$}
\author{Otis Chodosh}
\address{Department of Mathematics, Stanford University, Building 380, Stanford, CA 94305, USA}
\email{ochodosh@stanford.edu}
\author{Chao Li}
\address{Courant Institute, New York University, 251 Mercer St, New York, NY 10012, USA}
\email{chaoli@nyu.edu}

\begin{abstract}
We prove that a complete, two-sided, stable minimal immersed hypersurface in $\mathbf{R}^{4}$ is flat. 
\end{abstract}

\maketitle

\section{Introduction}
A complete, two-sided, immersed minimal hypersurface $M^{n}\to\RR^{n+1}$ is \emph{stable} if
\begin{equation}\label{eq:stable}
\int_M |A_M|^2 f^2 \leq \int_M |\nabla f|^2
\end{equation}
for any $f \in C^{\infty}_{0}(M)$. We prove here the following result.

\begin{theo}\label{theo:bernstein}
A complete, connected, two-sided, stable minimal immersion $M^{3}\to\RR^{4}$ is a flat $\RR^{3}\subset \RR^{4}$. 
\end{theo}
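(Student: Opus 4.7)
The strategy is to argue by contradiction, assuming $|A_M| \not\equiv 0$. The starting point is that for a minimal immersion $M^3 \to \RR^4$, the Gauss equation gives the intrinsic scalar curvature $R_M = -|A_M|^2$, so the stability inequality (\ref{eq:stable}) is exactly the spectral condition $-\Delta_M + R_M \geq 0$ on $M$. This is weaker than, but closely related to, positivity of the conformal Laplacian in dimension three, and I will promote it to a pointwise non-negative scalar curvature condition on a conformally related $3$-manifold, and then exploit Gromov's $\mu$-bubble machinery.

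\textbf{Step 1 (conformal change to $\tilde R \geq 0$).} Multiplying (\ref{eq:stable}) by $8$ shows that in fact $-8\Delta_M + R_M = -8\Delta_M - |A_M|^2 \geq 0$, which is positivity of the conformal Laplacian on $M^3$. By Fischer--Colbrie's characterization, there is a positive $u > 0$ on $M$ solving $-\Delta_M u = |A_M|^2 u$. Setting $\tilde g = u^{4} g_M$, the conformal transformation law in dimension three gives
\[
\tilde R \;=\; 7\,|A_M|^{2}\,u^{-4} \;\geq\; 0,
\]
with strict positivity wherever $|A_M| > 0$. Hence $(M^3, \tilde g)$ is a (possibly incomplete) $3$-manifold of non-negative, not identically zero, scalar curvature.

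\textbf{Step 2 (volume growth dichotomy and $\mu$-bubbles).} I would then split into cases. If $M$ has Euclidean volume growth $\vol_{g_M}(B_R(p)) \leq C R^3$, then the Schoen--Simon--Yau iteration -- using Simons' identity $\Delta_M |A_M|^2 \geq -2|A_M|^4 + \tfrac{2}{3}|\nabla |A_M||^2$ together with stability tested against $f = |A_M|^{1+\alpha}\phi$ -- forces $|A_M| \equiv 0$, contradiction. Otherwise, $M$ contains arbitrarily large regions of super-Euclidean volume, which I would convert into large regions in $(M,\tilde g)$ on which to run Gromov's \emph{warped $\mu$-bubble} argument: minimize a weighted perimeter functional
\[
\mathcal{A}(\Omega) \;=\; \cH^2_{\tilde g}(\partial\Omega) - \int_\Omega h\,d\vol_{\tilde g}
\]
for a suitably chosen positive weight $h$, producing a stable $\mu$-bubble $\Sigma^2 \subset M$. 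The second variation of $\mathcal A$, combined with $\tilde R \geq 0$ and the Gauss--Bonnet theorem on $\Sigma$, would force $\Sigma$ to be a union of spheres and bound its $\tilde g$-diameter in terms of $h$. Tuning $h$ to produce $\mu$-bubbles of arbitrarily large diameter then contradicts this bound, ruling out super-Euclidean growth.

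\textbf{Main obstacle.} The technical heart is Step 2 in the super-Euclidean case. The conformal factor $u$ is only known qualitatively via its PDE, and could degenerate along the ends, so $(M,\tilde g)$ may be incomplete or have wild asymptotic geometry; translating a statement about $g_M$-volume growth into the existence of a sufficiently large and regular region for a $\mu$-bubble in $\tilde g$ requires careful comparison between $u$, $|A_M|$ and the $g_M$-distance. Simultaneously, the weight $h$ must be chosen to guarantee both existence of a minimizer and strong enough curvature/diameter output from the second variation -- a delicate balance that is the main work of the argument.
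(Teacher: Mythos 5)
Your proposal takes a genuinely different route from the paper, which does not perform a conformal change and does not use $\mu$-bubbles at all. The paper's argument (after reducing to the simply connected, bounded curvature case via a point-picking blow-up) constructs a positive Green's function $u$ on $M$, studies the level-set quantity $F(t)=\int_{u^{-1}(t)}|\nabla u|^2$, derives two competing inequalities relating $F(t)$ to $\cA(t)=\int_{u^{-1}(t)}|A_M|^2$ — one from the stability inequality tested against $|\nabla u|^{1/2}\varphi(u)$ via Stern's rearranged Bochner formula, and one from a delicate extension of the Munteanu--Wang monotonicity formula — and combines them to prove the sharp decay $F(t)=O(t^2)$. An $L^3$ Schoen--Simon--Yau estimate plus the co-area formula on level sets of $u$ then yields $\int_M|A_M|^3=0$.

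Your Step 1 is a correct and standard observation: the Fischer--Colbrie--Schoen positive Jacobi field $u$ with $-\Delta u=|A_M|^2 u$ gives, under $\tilde g=u^4 g_M$, a conformal scalar curvature $\tilde R=u^{-5}(-8\Delta u+R_M u)=7|A_M|^2 u^{-4}\geq 0$. But Step 2 has a fundamental gap that you identify as a ``technical obstacle'' when it is actually a conceptual obstruction. Warped $\mu$-bubble arguments produce width/diameter bounds only under a \emph{strictly positive} lower bound on scalar curvature (e.g.\ $\tilde R\geq\sigma>0$). Here $\tilde R\geq 0$ with equality wherever $|A_M|=0$, and — since the curvature estimate we are trying to prove would force $|A_M|\to 0$ at infinity — the conformal scalar curvature necessarily decays to zero. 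There is no weight $h$ that can extract a diameter bound from $\tilde R\geq 0$ alone: flat $\RR^3$ has $R\equiv 0$ and satisfies every width inequality vacuously. Compounding this, the conformal metric $\tilde g$ will typically be incomplete (a positive Jacobi field on a stable minimal hypersurface with bounded curvature can decay along ends), so the variational problem for the $\mu$-bubble functional is not even well-posed on $(M,\tilde g)$. Finally, the volume-growth dichotomy is not clean: Schoen--Simon--Yau handles $\vol(B_R)\leq CR^3$, and intermediate growth rates $R^{3+\epsilon}$ are covered by neither branch. The paper's Green's-function/monotonicity argument avoids all of these pitfalls by working directly with $g_M$ and extracting quantitative level-set information rather than attempting to geometrize the stability inequality conformally.
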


This resolves a well-known conjecture of Schoen (cf.\ \cite[Conjecture 2.12]{CM:book}). The corresponding result for $M^{2}\to\RR^{3}$ was proven by Fischer-Colbrie--Schoen, do Carmo--Peng, and Pogorelov \cite{fischer-colbrie-schoen,docarmo-peng,pogorelov} in 1979. Theorem \ref{theo:bernstein} (and higher dimensional analogues) has been established under natural cubic volume growth assumptions by Schoen--Simon--Yau \cite{SSY} (see also \cite{Simons,SchoenSimon}). Furthermore, in the special case that $M^{n}\subset \RR^{n+1}$ is a minimal graph (implying \eqref{eq:stable} and volume growth bounds) flatness of $M$ is known as the \emph{Bernstein problem}, see \cite{Flemming,DG:bernstein,Almgren,Simons,BDG}. Several authors have studied Theorem \ref{theo:bernstein} under some extra hypothesis, see e.g., \cite{SY:harmonic.stable.minimal,CaoShenZhu,Berard,ShenZhu,Chen:curvature,NelliSoret,Miyaoka,Palmer:L2,Tanno}. We also note here some recent papers \cite{CFRS,FigalliSerra}  concerning stability in related contexts.

It is well-known (cf.\ \cite[Lecture 3]{White:notes}) that a result along the lines of Theorem \ref{theo:bernstein} yields curvature estimates for minimal hypersurfaces in $\RR^4$.
\begin{theo}\label{theo:curvature}
	There exists $C<\infty$ such that if $M^3\rightarrow \RR^4$ is a two-sided, stable minimal immersion, then
	\[
	|A_M (p)| d_M(p,\partial M) \le C.
	\]
\end{theo}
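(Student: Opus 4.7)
The plan is a standard blow-up/contradiction argument reducing the local curvature bound to the Bernstein-type classification in Theorem \ref{theo:bernstein}; this is essentially the reduction indicated in \cite[Lecture 3]{White:notes}. Assume for contradiction that the estimate fails: there is a sequence $M_i^3 \to \RR^4$ of two-sided stable minimal immersions and points $p_i \in M_i$ with $|A_{M_i}(p_i)|\, d_{M_i}(p_i,\partial M_i) \to \infty$.

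First, I would apply a Choi--Schoen style point-picking lemma to replace $p_i$ by a nearby point $q_i$ together with a radius $\rho_i \le \tfrac12 d_{M_i}(q_i,\partial M_i)$ so that $r_i := |A_{M_i}(q_i)|$ satisfies $r_i \rho_i \to \infty$ and $|A_{M_i}| \le 2 r_i$ throughout the intrinsic ball of radius $\rho_i$ about $q_i$. Rescaling the ambient $\RR^4$ by the factor $r_i$ about the image of $q_i$ produces two-sided stable minimal immersions $\widetilde M_i \to \RR^4$ with $|A_{\widetilde M_i}(q_i)| = 1$, uniformly bounded second fundamental form on intrinsic balls of radius $r_i \rho_i \to \infty$ around $q_i$, and with boundary receding to infinity.

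Next, I would extract a subsequential smooth limit. Using local graphical parametrizations over tangent hyperplanes, justified by the uniform bound on $|A|$ together with the minimal surface system, one obtains uniform $C^{k,\alpha}$ estimates in each coordinate patch, yielding a pointed smooth limit $M_\infty^3 \to \RR^4$. This limit is complete because the curvature-bounded radius around $q_i$ diverges, two-sided because one extracts a convergent subsequence of the globally defined unit normals $\nu_i$, and stable because any $f \in C^\infty_0(M_\infty)$ pulls back to test functions on the $\widetilde M_i$ through the convergence maps, so \eqref{eq:stable} passes to the limit by dominated convergence. At the basepoint $|A_{M_\infty}(q_\infty)| = 1$. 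Applying Theorem \ref{theo:bernstein} now forces $M_\infty$ to be a flat affine $\RR^3 \subset \RR^4$, contradicting $|A_{M_\infty}(q_\infty)| = 1$.

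The main technical step I expect is the smooth compactness in the immersed category: the $\widetilde M_i$ are not assumed embedded, so one must track multiple sheets near the basepoint and construct the limit as a pointed immersion rather than as a subset of $\RR^4$. The uniform bound on $|A|$ ensures local graphical structure on a definite intrinsic scale, which is what makes the extraction of the limit go through; once the limit is in hand, two-sidedness, completeness and stability transfer essentially formally, and then Theorem \ref{theo:bernstein} closes the argument.
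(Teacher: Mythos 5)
Your blow-up and compactness outline matches the paper's point-picking argument, but there is a circularity in the final step that you would have had no way to see blind. You close the argument by applying Theorem~\ref{theo:bernstein} to the blow-up limit $M_\infty$; however, in this paper Theorem~\ref{theo:bernstein} is itself \emph{deduced} from Theorem~\ref{theo:curvature} (a complete stable minimal hypersurface with empty boundary and an interior point $p$ has $d_M(p,\partial M)=\infty$, so $|A_M(p)|\,d_M(p,\partial M)\le C$ forces $A_M\equiv 0$). Citing Theorem~\ref{theo:bernstein} at the last step therefore closes a logical loop rather than the proof.

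The reason the paper runs the implication that way is that the entire Green's-function machinery in Sections~\ref{sec:Greens}--\ref{sec:MW} is developed under the standing hypothesis of \emph{uniformly bounded curvature} (Proposition~\ref{prop:Greens} assumes $|A_M|\le K$), which a general stable immersion as in Theorem~\ref{theo:bernstein} does not a priori satisfy. The blow-up limit $M_\infty$ you construct, on the other hand, does satisfy $|A_{M_\infty}|\le 1$ by the point-picking. So the correct closing move is to pass to the universal cover of $M_\infty$ (stability lifts by \cite[Theorem 1]{fischer-colbrie-schoen}) and feed this simply connected, complete, two-sided, stable, bounded-curvature immersion directly into the machinery: build $u$ via Proposition~\ref{prop:Greens}, get $F(t)=O(t^2)$ from Lemma~\ref{lemm:F.apriori.bds} and Corollary~\ref{coro:F.est.sharp}, and then plug a logarithmic cutoff $\varphi(u)$ into the $L^3$ Schoen--Simon--Yau estimate of Proposition~\ref{prop:SSY.L3} to conclude $\int_{M_\infty}|A_{M_\infty}|^3=0$, contradicting $|A_{M_\infty}(0)|=1$. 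Everything else in your outline --- the Choi--Schoen-style point selection, the rescaling, the pointed immersed compactness via local graphical parametrizations, and the transfer of completeness, two-sidedness, and stability to the limit --- is correct and is essentially the same as the paper's; only the invocation of Theorem~\ref{theo:bernstein} needs to be replaced by the direct bounded-curvature argument.
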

More generally, we recall that a minimal immersion $M^{3}\to (N^{4},g)$ is stable if 
\[
\int_M (|A_M|^2+\Ric_{N}(\nu,\nu)) f^2 \leq \int_M |\nabla f|^2
\]
for all $f \in C^{\infty}_{0}(M\setminus\partial M)$. 
\begin{theo}\label{theo:curvature.Riem}
	Suppose $(N^4,g)$ is a closed Riemannian manifold. There exists $C=C(N,g)$ such that if $M^3\rightarrow N^4$ is a two-sided, stable minimal immersion, then
	\[
	|A_M (p)| \min\{ 1, d_M(p,\partial M)\} \le C.
	\]
\end{theo}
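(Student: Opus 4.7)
The plan is to argue by contradiction via a standard blow-up/compactness argument, reducing to the Euclidean case handled by Theorem \ref{theo:bernstein}. Suppose no such constant $C = C(N,g)$ exists. Then there is a sequence of two-sided stable minimal immersions $F_i : M_i^3 \to (N^4,g)$ and points $p_i \in M_i$ with
\[
|A_{M_i}(p_i)|\min\{1, d_{M_i}(p_i, \partial M_i)\} \to \infty.
\]
A standard point-picking argument allows us to assume, possibly after replacing $p_i$ by a nearby point, that $|A_{M_i}| \le 2|A_{M_i}(p_i)|$ on the intrinsic ball $B_{M_i}(p_i, \tfrac12\min\{1,d_{M_i}(p_i,\partial M_i)\})$, while the divergence condition on $p_i$ is preserved.

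Set $\mu_i = |A_{M_i}(p_i)| \to \infty$ and rescale the ambient metric to $\tilde g_i = \mu_i^2 g$. Since $N$ is closed and hence has bounded geometry, the pointed ambient manifolds $(N, \tilde g_i, F_i(p_i))$ converge in the pointed smooth Cheeger--Gromov sense to Euclidean $(\RR^4, g_{\mathrm{Eucl}}, 0)$. In the rescaled metric the immersions satisfy $|A| \le 2$ on intrinsic balls of radius $\tfrac12 \mu_i \min\{1, d_{M_i}(p_i, \partial M_i)\} \to \infty$, with $|A| = 1$ at the basepoint. Standard compactness for minimal immersions with uniformly bounded second fundamental form (via local graphical representations over tangent planes, elliptic regularity, and a diagonal argument) then produces, after passing to a subsequence, a smooth limit immersion $F_\infty : M_\infty^3 \to \RR^4$ which is complete, two-sided (the rescaled unit normals $\tilde\nu_i$ converge to a global unit normal on $M_\infty$), and satisfies $|A_{M_\infty}|(p_\infty) = 1$.

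Stability of $M_\infty$ follows from stability of the $M_i$: for $\varphi \in C^\infty_0(M_\infty)$, pull back to test functions $\varphi_i$ on $M_i$ so that $\int |\nabla \varphi_i|^2 \to \int |\nabla \varphi|^2$ and $\int |A_{M_i}|^2 \varphi_i^2 \to \int |A_{M_\infty}|^2 \varphi^2$. Under the rescaling $g \mapsto \tilde g_i$, Ricci as a $(0,2)$-tensor is unchanged, so evaluated on $\tilde g_i$-unit normals one has $\mathrm{Ric}_{\tilde g_i}(\tilde\nu_i, \tilde\nu_i) = \mu_i^{-2}\Ric_g(\nu_i,\nu_i) = O(\mu_i^{-2})$ uniformly. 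Thus the Ricci contribution to the rescaled stability inequality vanishes in the limit, and we obtain \eqref{eq:stable} for $M_\infty \to \RR^4$. Restricting to the connected component through $p_\infty$ and applying Theorem \ref{theo:bernstein}, $M_\infty$ must be a flat $\RR^3 \subset \RR^4$, contradicting $|A_{M_\infty}|(p_\infty) = 1$.

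The main obstacle is executing the compactness step rigorously: uniform bounds on $|A|$ give smooth local graphical representations over tangent planes, but one must carefully extract a limit immersion whose domain is complete (possibly with sheets of higher multiplicity) and verify that two-sidedness and stability both survive in the limit. The essential mechanism is that rescaling the ambient metric by $\mu_i^2$ kills the Ricci term at rate $\mu_i^{-2}$, so in the blow-up one is genuinely in flat $\RR^4$ where Theorem \ref{theo:bernstein} is available.
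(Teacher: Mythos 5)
Your proposal is correct and uses the same blow-up/point-picking mechanism as the paper, including the key observation that rescaling $g\mapsto\mu_i^2 g$ makes the Ricci contribution $\Ric_{\tilde g_i}(\tilde\nu_i,\tilde\nu_i)=\mu_i^{-2}\Ric_g(\nu_i,\nu_i)\to 0$, so that the blow-up limit is a complete, two-sided, stable minimal immersion in $\RR^4$ with $|A(0)|=1$. The one organizational difference worth flagging: you invoke Theorem \ref{theo:bernstein} as a black box to kill the limit, whereas the paper proves Theorems \ref{theo:curvature} and \ref{theo:curvature.Riem} first and deduces Theorem \ref{theo:bernstein} afterward; its point-picking argument produces a limit with bounded $|A_M|$, and it then establishes flatness of that limit directly via the Green's function and Stern--Bochner machinery (which requires the bounded-curvature hypothesis for Proposition \ref{prop:Greens}). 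Your ordering is not circular, since Theorem \ref{theo:bernstein} only relies on the Euclidean curvature estimate Theorem \ref{theo:curvature}, not on Theorem \ref{theo:curvature.Riem} — but this dependency chain is worth making explicit when writing it up, and in a self-contained exposition one would anyway be running essentially the paper's argument underneath the appeal to Theorem \ref{theo:bernstein}.
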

We have recently generalized Theorem \ref{theo:curvature.Riem} to hold in (non-compact) ambient $(N^4,g)$ with bounded sectional curvature in a joint work with Stryker \cite[Corollary 2.5]{ChodoshLiStryker2022stableminimal}, which resolves \cite[Conjecture 2.13]{CM:book}. 

Theorems \ref{theo:curvature} and \ref{theo:curvature.Riem} are the four-dimensional analogue of the well-known curvature estimate of Schoen \cite{Schoen:estimates} for minimal surfaces in three-dimensions. Note that by the work of Schoen--Simon--Yau \cite{SSY}, such an estimate was previously known to hold where $C$ depended on an upper bound for volume of $M^3$ in small balls. 

\begin{rema}
There have been several interesting developments since the first version of this paper was posted. The authors have discovered \cite{CL:aniso} a new proof of Theorem \ref{theo:bernstein} that can be localized to obtain (interior) volume estimates (in the spirit of Pogorelov \cite{pogorelov}; cf.\ \cite{CM:elliptic}). This new proof is related to the study of \emph{uniformly positive} scalar curvature, while the current paper is related to the study of \emph{non-negative} scalar curvature. Subsequently, Catino--Mastrolia--Roncoroni have discovered \cite{CMR} a completely different proof of Theorem \ref{theo:bernstein}, related to the study of Bakry--\'Emery--Ricci curvature. Interestingly, the dimension restriction $n+1=4$ enters each proof in a different way. 
\end{rema}

A slight modification of the proof of Theorem \ref{theo:bernstein} yields a structure theorem for finite index minimal hypersurfaces in $\RR^4$, analogous to the well-known results of Gulliver, Fischer--Colbrie, and Osserman \cite{FC,Gulliver,Osserman}. Recall that a complete, two-sided, immersed minimal hypersurface $M^n\to\RR^{n+1}$ has \emph{finite Morse index} if 
\[
\ind(M) : = \sup\left\{\dim V : V\subset C^\infty_0(M), Q(f,f) < 0 \textrm{ for all } f\in V\setminus\{0\}\right\} < \infty,
\]
where $Q (f,f)=  \int_M |\nabla f|^2 - \int_M |A_M|^2 f^2$. 

\begin{theo}\label{theo:index}
A complete, two-sided, minimal immersion $M^3\to\RR^4$ has finite Morse index if and only if it has finite total curvature $\int_M |A_M|^3< \infty$. 
\end{theo}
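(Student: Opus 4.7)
The plan is to establish the two implications separately, with the reverse direction being the substantive one that actually calls on Theorem \ref{theo:bernstein}. For the forward direction, $\int_M |A_M|^3 < \infty \Rightarrow \ind(M) < \infty$: I would apply the Michael--Simon Sobolev inequality on a minimal $M^3\subset\RR^4$ (iterated, e.g., against $f^4$) to obtain $\|f\|_{L^6}^2 \le C\|\nabla f\|_{L^2}^2$ for $f\in C^\infty_c(M)$. Combining with Hölder gives
\[
\int_M |A_M|^2 f^2 \le C\Big(\int_{\supp f} |A_M|^3\Big)^{2/3} \int_M |\nabla f|^2.
\]
Choosing a compact $K\subset M$ with $\int_{M\setminus K}|A_M|^3$ small enough makes $M\setminus K$ stable, and a standard comparison of eigenvalues of the stability operator on the compact piece $K$ with those on $M$ then forces $\ind(M) < \infty$.

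For the converse, if $\ind(M) < \infty$ then $M$ is stable outside some compact set $K_0$, and Theorem \ref{theo:curvature} yields $|A_M(p)| \le C/|p|$ for $|p|$ large. I would then run a blow-down: for any sequence $r_k\to\infty$, the rescaled immersions $r_k^{-1} M$ subconverge smoothly on compact subsets of $\RR^4\setminus\{0\}$ (using the curvature bound plus monotonicity for area) to a two-sided stable minimal cone $C\subset\RR^4$ that is smooth away from the origin. Applying Theorem \ref{theo:bernstein} to the sheets of $C$ (each of which, after passing to the cone structure, is either a plane or the smooth limit of complete two-sided stable minimal immersions into $\RR^4$) forces $C$ to be a finite union of hyperplanes $\RR^3\subset\RR^4$ through the origin.

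Planarity of every tangent cone at infinity implies that $M$ has cubic volume growth and finitely many ends, each end eventually the graph of a smooth function $u$ over some plane $P\cong\RR^3$, with $u$ and $\nabla u$ tending to zero. The minimal graph equation is asymptotic to Laplace's equation on $P$, and a standard harmonic-function expansion yields $u(x) = a|x|^{-1} + O(|x|^{-2})$, hence $|A_M|(x) = O(|x|^{-3})$ on each end. Cubic volume growth on each dyadic annulus then gives $\int_{M\cap(B_{2^{k+1}}\setminus B_{2^k})} |A_M|^3 = O(2^{-6k})$, which is summable. The main obstacle is the blow-down step: the curvature estimate alone gives only $|A_M| = O(1/|p|)$, which is not integrable against the volume growth, so it is crucial to use Theorem \ref{theo:bernstein} to identify the tangent cone as a union of planes before the graphical asymptotic analysis (in the spirit of Schoen's classical treatment of finite-total-curvature minimal surfaces in $\RR^3$) can upgrade this to the sharp $O(1/|p|^3)$ decay needed.
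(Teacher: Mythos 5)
The forward direction ($\int_M|A_M|^3<\infty\Rightarrow\ind(M)<\infty$) is fine: the H\"older--plus--Michael--Simon argument you sketch is the content of Tysk's proof, which the paper simply cites.

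The reverse direction has a genuine gap in the blow-down step, and the gap is precisely the difficulty the paper's method is designed to overcome. You rescale $M_k = r_k^{-1}M$ and assert that the $M_k$ subconverge smoothly on compact subsets of $\RR^4\setminus\{0\}$ to a stable minimal cone, citing ``the curvature bound plus monotonicity for area.'' The curvature estimate from Theorem \ref{theo:curvature} does give uniform bounds on $|A_{M_k}|$ on annuli away from the origin, but smooth (or even varifold) subconvergence additionally requires \emph{local mass bounds}: one needs $\sup_k\cH^3(M_k\cap B_2) < \infty$, i.e. $\sup_k r_k^{-3}\cH^3(M\cap B_{2r_k}) < \infty$, which is exactly a cubic volume-growth bound on $M$. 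The monotonicity formula gives only that the density ratio $\Theta(r)=r^{-3}\cH^3(M\cap B_r)$ is nondecreasing; it does \emph{not} rule out $\Theta(r)\to\infty$, in which case the blow-downs carry unbounded mass (an unbounded number of sheets through the annulus) and have no reasonable limit. Thus your argument proves ``tangent cones are planes $\Rightarrow$ cubic volume growth $\Rightarrow$ graphical ends $\Rightarrow$ finite total curvature,'' but the very first implication needs cubic volume growth (or a mass bound) already in place to extract the tangent cone, which is circular. This is not a technicality: for $n=3$ in $\RR^4$, ruling out superpolynomial volume growth for a stable hypersurface without a priori bounds is the entire content of Theorem \ref{theo:bernstein}, and it is why Corollary \ref{coro:index} is stated as a \emph{consequence} of Theorem \ref{theo:index} rather than an ingredient.

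The paper avoids all of this by working intrinsically. On each end $E$ of $M$ (finitely many, by Proposition \ref{prop:finitely.ends}), one constructs a harmonic function $u$ with finite Dirichlet energy, $u=1$ on $\partial E$ and $u\to0$ at infinity, analogous to the Green's function. Since $E$ is stable outside a compact set, the Stern--Bochner and Munteanu--Wang arguments of Sections \ref{sec:Stern}--\ref{sec:MW} carry over verbatim to give $F(t)=\int_{u^{-1}(t)}|\nabla u|^2 = O(t^2)$. Feeding $\varphi\circ u$ with a log-cutoff into the $L^3$ Schoen--Simon--Yau estimate (Proposition \ref{prop:SSY.L3}) and using the coarea formula then yields $\int_E|A_M|^3<\infty$ directly, with no appeal to blow-downs, volume growth, or tangent-cone regularity. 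The asymptotic picture you describe (planar tangent cones, graphical ends, $|A_M|=O(|x|^{-3})$) is then recovered \emph{afterwards} from Schoen's and Tysk's work, once finite total curvature is known.
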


We remark that Tysk \cite{Tysk} proved the same statement for a complete, two-sided, minimal immersion $M^n\to \RR^{n+1}$ (for $3\leq n\leq 6$) under the assumption that $M$ has Euclidean volume growth. 

Theorem \ref{theo:index} has strong consequences on the structure of $M$ near infinity. We recall the following definition.

\begin{defi}[{\cite[Section 2]{Schoen:symmetry}}]
	Suppose $n\ge 3$, $M^n \to \RR^{n+1}$ is a complete minimal immersion. An end $E$ of $M$ is \emph{regular at infinity} if it is the graph of a function $w$ over a hyperplane $\Pi$ with the asymptotics
	\[w(x)=b+a|x|^{2-n} + \sum_{j=1}^n c_j x_j |x|^{-n}+O\left(|x|^{-n}\right),\]
	for some constants $a,b,c_j$, where $x_1,\cdots,x_n$ are the coordinates in $\Pi$.
\end{defi}

By \cite{Anderson,Schoen:symmetry,ShenZhu,Tysk} (see also \cite[Appendix A]{Li:index4d}), if $M^n \to \RR^{n+1}$ is a minimal immersion with finite total curvature, then each end of $M$ is regular at infinity. Moreover, by \cite{LiWang} a two-sided minimal immersion with finite Morse index has finitely many ends. Combined with Theorem \ref{theo:index}, this yields the following result. 

\begin{coro}\label{coro:index}
	Suppose $M^3\to \RR^4$ is a complete, two-sided, minimal immersion with finite Morse index. Then $M$ has finitely many ends, each of which is regular at infinity. In particular, $M$ has cubic volume growth, i.e.
	\[\sup_{r\to \infty} \frac{\Vol\left(B_r(0)\cap M\right)}{r^3}<\infty.\]
\end{coro}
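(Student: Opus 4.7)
The plan is to assemble Corollary \ref{coro:index} directly from Theorem \ref{theo:index} and the results summarized in the paragraph preceding the statement. Since Theorem \ref{theo:index} identifies finite Morse index with the finite total curvature condition $\int_M |A_M|^3 < \infty$, both the ``finite ends'' input of \cite{LiWang} and the ``regular at infinity'' input of \cite{Anderson, Schoen:symmetry, ShenZhu, Tysk} become simultaneously available.

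First, I would invoke \cite{LiWang}, which asserts that a complete two-sided minimal immersion of finite Morse index into Euclidean space has only finitely many ends. This yields the first conclusion and lets us write $M \setminus K = E_1 \sqcup \cdots \sqcup E_k$ for some compact set $K \subset M$ and finitely many ends $E_i$. Next, I would feed the conclusion $\int_M |A_M|^3 < \infty$ of Theorem \ref{theo:index} into the asymptotic analysis of \cite{Anderson, Schoen:symmetry, ShenZhu, Tysk} (conveniently compiled in \cite[Appendix A]{Li:index4d}) to conclude that each $E_i$ is regular at infinity. That is, outside a larger compact set, $E_i$ is the graph of a function $w_i$ over a hyperplane $\Pi_i \subset \RR^4$ with
\[
w_i(x) = b_i + a_i |x|^{-1} + \sum_{j=1}^3 c_{i,j}\, x_j\, |x|^{-3} + O(|x|^{-3}).
\]

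For the cubic volume growth, I would argue as follows. Differentiating the expansion above gives $|\nabla w_i(x)| = O(|x|^{-2})$, so the Jacobian $\sqrt{1 + |\nabla w_i|^2}$ of the graphical parametrization is uniformly bounded on $E_i$. Hence $\Area(E_i \cap B_r(0))$ is controlled by a constant times the Euclidean volume of a radius $r$ ball in $\Pi_i$, that is, by $C r^3$ for all sufficiently large $r$. Adding the bounded contribution of the compact core $K$ and summing over the finitely many ends then yields $\Vol(M \cap B_r(0)) \leq C' r^3$ for all $r \geq 1$.

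I do not expect a genuine obstacle here. All of the substantive content is absorbed into Theorem \ref{theo:index}, the end-counting result of \cite{LiWang}, and the classical asymptotic analysis of finite total curvature ends; the corollary itself is essentially a short assembly plus an elementary Jacobian estimate on each graphical end.
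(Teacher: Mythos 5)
Your proposal is correct and follows the same route as the paper: apply \cite{LiWang} for finitely many ends, use Theorem \ref{theo:index} to obtain finite total curvature, invoke \cite{Anderson,Schoen:symmetry,ShenZhu,Tysk} for regularity at infinity of each end, and deduce cubic volume growth from the graphical asymptotics. The only difference is that you spell out the elementary Jacobian estimate for the volume bound, which the paper leaves implicit.
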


In fact, by \cite{Li:index4d}, we can bound the volume growth rate linearly in terms of the Morse index
\[\sup_{r\to \infty} \frac{\Vol\left(B_r(0)\cap M\right)}{\frac{4}{3} \pi r^3}\le 6\left(\ind(M)+1\right).\]
These results should be relevant to the study of minimal hypersurfaces with bounded index in a four-manifold (cf.\ \cite{CKM,Song:index}).

\begin{rema}
We also note that Theorems \ref{theo:bernstein} and \ref{theo:curvature} hold if \eqref{eq:stable} is weakened to 
\[
(1-\delta_s) \int_M |A_M|^2 f^2 \leq \int_M |\nabla f|^2
\]
for a constant $\delta_s\in [0,\tfrac 13]$. This notion (called $\delta_s$-stability) was introduced by Colding--Minicozzi \cite{CM:disks2} (cf.\ \cite{CM:elliptic}) as a technical tool to quantify the well-known phenomenon that smooth convergence with multiplicity yields stable minimal hypersurfaces. (Note that Tam-Zhou \cite{TamZhou2009stability} proved that the $3$-dimensional catenoid is $\tfrac 23$-stable.) We discuss this further in Appendix \ref{app:almost.stable}.
\end{rema}

\subsection{Idea of the proof of Theorem \ref{theo:bernstein}}
Consider a non-flat, complete, two-sided stable minimal hypersurface $M^{3}\to\RR^{4}$. It turns out to be no loss of generality to assume that $M$ is simply connected and of bounded curvature. 

It is well-known (see, e.g. \cite{Li:harmonic.lectures}) that $M$ admits a positive Green's function of the Laplacian operator denoted here by $u$. (In other words, $M$ is non-parabolic.) The central quantity considered in the proof of Theorem \ref{theo:bernstein} is
\[
F(t) := \int_{\Sigma_{t}} |\nabla u|^{2}
\]
where $\Sigma_{t} = u^{-1}(t)$. (Quantities of this type have been considered in several contexts including \cite{Ni:mean.value,Colding2012monotonicity,CM:pnas,CM:family.mon,MunteanuWang,AFM:sharp.Ric,AMO:pmt}.) A simple computation shows that $F(t) = 4\pi t^2$ when $M = \RR^{3}$. 

For general $M$, we can view any estimate of $F(t)$ as $t\to0$ as certain kind of control on the growth rate of $M$ near infinity. It is straightforward to show that $F(t)$ satisfies the \emph{a priori} estimate $F(t) = O(t)$ as $t\to 0$ (see Lemma \ref{lemm:F.apriori.bds}). An important observation is that if we can upgrade this to the sharp estimate $F(t) = O(t^{2})$, then we can conclude that $M$ is flat. 

At a heuristic level, if we were to pretend that $u\sim r^{-1}$ and $|\nabla u| \sim r^{-2}$ (this holds on $\RR^{3}$), we would have $F(t) \sim t^{4}|\partial B_{t^{-1}}| = r^{-4}|\partial B_{r}|$. Thus $F(t) = O(t^{2})$ can be thought of as the estimate $|\partial B_{r}| = O(r^{2})$ which would suffice to show $M$ is flat by work of Schoen--Simon--Yau \cite{SSY}. In fact, a closely related argument actually works: we can consider $f = \varphi \circ u$ in the $L^{3}$-version of the Schoen--Simon--Yau estimates (see Proposition \ref{prop:SSY.L3}) and use the co-area formula combined with a log-cutoff near the pole and the end to see that $F(t) = O(t^2)$ implies that $M$ is flat. 

It thus remains to explain how to show $F(t) = O(t^{2})$. 
Our strategy is to find two (competing) estimates relating $F(t)$ and the quantity
\[
\cA(t) = \int_{\Sigma_{t}} |A_{M}|^{2}.
\]
One can view $\cA(t)$ as measuring the bending of $M$ near infinity in a certain sense. 

Our first estimate follows by extending a recent
\footnote{See also their followup work \cite{MW:geometry-scalar} which appeared after the first version of this article was posted.}  
argument of Munteanu--Wang who obtained \cite{MunteanuWang} a sharp monotonicity formula for the quantity $F(t)$ on a $3$-manifold with non-negative scalar curvature by a clever application of Stern's rearrangement of the Bochner formula \cite{Stern} (allowing them to leverage Gauss--Bonnet on the level sets $\Sigma_t$). 

In the present situation, $M$ actually has non-positive scalar curvature $R_{M} = -|A_{M}|^{2}$, so this comes in with a bad sign when applying their method. Moreover, our \emph{a priori} bound $F(t)=O(t)$ causes complications with a crucial step in their argument. However, keeping track of this bad sign and developing a delicate regularization procedure to control this limt appropriately (cf.\ Remark \ref{rema:MW.ric}), we find 
\begin{align*}
F(t)  
& \leq  O(t^{2})  + \frac 14 t \int_0^t   \cA(s)\, ds  + \frac 14 t^{3}   \int_t^1 s^{-2} \cA(s)\, ds
\end{align*}
(see Proposition \ref{prop:MW-monotonicity}). To obtain an estimate for $F(t)$ we thus need to estimate $\cA(t)$ appropriately. We achieve this by considering\footnote{This part of the argument is inspired by the work of Schoen--Yau \cite{SY:harmonic.stable.minimal} who considered $f= |\nabla u|\psi$ and used the standard Bochner formula in \eqref{eq:stable} to prove that $M$ does not admit nonconstant finite energy harmonic functions.} the stability inequality \eqref{eq:stable} with $f = |\nabla u|^{\frac 12} \varphi(u)$. Using Stern's rearrangement of the Bochner formula and the co-area formula, we obtain
\begin{align*}
& \int_{0}^{\infty} \varphi(s)^{2}  \cA(s)\, ds  \leq \frac{8\pi}{3} \int_{0}^{\infty} \varphi(s)^{2} \, ds  +  \frac 43 \int_{0}^{\infty} \varphi'(s)^{2} F(s) \, ds,
\end{align*}
which yields
\begin{align*}
t \int_{0}^{t}   \cA(s)\, ds + t^3 \int_{t}^{1}  s^{-2}  \cA(s)\, ds   \leq O(t^{2})  +  \frac 43 t^3 \int_{t}^{1} s^{-4} F(s) \, ds
\end{align*}
after a judicious choice of the test function (see Proposition \ref{prop:stern.bochner.stable} and Corollary \ref{coro:cutoff}). 

Putting these inequalities together we obtain 
\[
F(t)  \leq  O(t^2)  +  \frac 13 t^3 \int_{t}^{1}  s^{-4} F(s) \, ds.
\]
Because the constant $\frac 13$ is $<1$, it is possible to ``absorb'' the integral into the left-hand side, yielding $F(t) = O(t^{2})$. As explained above, this implies that $M$ is flat. 

We emphasize that both of the competing inequalities described above rely on Stern's Bochner formula, and thus use the three-dimensionality of $M$ in a strong way. In particular, Stern's version of the Bochner formula as used in the stability inequality preserves more of the factor in front of $\cA(s)$ than the ``standard'' Bochner formula would (basically, this is due to the use of Gauss--Bonnet). Extra loss at this step would make the constant $\frac 13$ larger and could cause issues with the ``absorption'' step. 
 
\subsection{Organization of the paper} In Section \ref{sec:Greens} we collect basic properties of the Green's function on a stable minimal hypersurface. In Section \ref{sec:Stern} we combine Stern's Bochner formula with the stability inequality and in Section \ref{sec:MW} we extend the work of Munteanu--Wang and combine this with the previous section to prove that $F(t) = O(t^{2})$. In Section \ref{sec:proofs} we use this to prove Theorems \ref{theo:bernstein}, \ref{theo:curvature}, and \ref{theo:curvature.Riem}. Appendix \ref{app:ends} recalls the one-ended (finite-ended) property of stable (finite index) minimal hypersurfaces, Appendix \ref{app:harnack.sobolev} recalls Yau's Harnack and Michael--Simon Sobolev inequality in this setting, Appendix \ref{app:harmonic} recalls properties of the nodal/critical set of a harmonic function, and Appendix \ref{app:L3} proves a $L^{3}$ version of the Schoen--Simon--Yau estimates. Appendix \ref{app:almost.stable} contains an extension of Theorem \ref{theo:bernstein} to almost stable minimal immersions.

\subsection{Acknowledgements}
We are grateful to Rick Schoen and Christos Mantoulidis for their interest and encouragement, to Alice Chang for a helpful discussion, and to the referees for some suggestions concerning the exposition. O.C. was supported by an NSF grant (DMS-2016403), a Terman Fellowship, and a Sloan Fellowship. C.L. was supported by an NSF grant (DMS-2005287).

\section{The Green's function on a stable minimal hypersurface}\label{sec:Greens}

It is well-known (cf.\ \cite[Theorem 10.1]{Li:harmonic.lectures}) that when $n>2$ a stable minimal hypersurface in $\RR^{n+1}$ is non-parabolic (i.e., admits a positive Green's function). We recall this fact below as well as establishing several useful facts about the behavior of the level sets of such a Green's function. 

For $n\ge 3$, recall that the Euclidean Green's function on $\RR^n$ (at a point $p$) is $c_n |x-p|^{2-n}$, for $c_n^{-1} = (n-2)\omega_{n-1}$ (so $c_3^{-1} = 4\pi$); here, $\omega_{n-1}$ denotes the $(n-1)$-volume of the unit sphere in $\RR^{n}$.

\begin{prop}\label{prop:Greens}
For $n>2$, suppose that $M^n \to \RR^{n+1}$ is a complete, connected, simply connected, two-sided, stable minimal immersion with uniformly bounded curvature $|A_M| \leq K$. Then, for $p\in M$, there exists $u \in C^\infty_\textnormal{loc}(M\setminus\{p\})$ with the following properties:
\begin{enumerate}
\item $u$ is harmonic, $\Delta u = 0$, on $M\setminus\{p\}$,
\item $u>0$ on $M\setminus\{p\}$ and $\inf  u = 0$,
\item $u(x) = c_n(1+o(1))d_M(x,p)^{2-n}$, $|\nabla u|(x) = (n-2)c_n(1+o(1)) d_M(x,p)^{1-n}$, as well as $\bangle{\nabla |\nabla u|,\tfrac{\nabla u}{|\nabla u|}} = (n-1)(n-2)c_n(1+o(1)) d_M(x,p)^{-n}$  as $x\to p$,
\item if $K$ is a compact set containing $p$, then $\int_{M\setminus K} |\nabla u|^2 < \infty$,
\item $u(x)\to0$ as $d_M(x,p)\to \infty$,
\item for all $s\in (0,\infty)$, the set $\Omega_{s} : = \{u \geq s\} \cup \{p\}$ is compact, and
\item if $s \in (0,\infty)$ is a regular value of $u$ then $\partial\Omega_{s} = u^{-1}(s) : = \Sigma_s$ is a closed connected hypersurface in $M$.
\end{enumerate}
\end{prop}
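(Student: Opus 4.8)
The plan is to build $u$ as a limit of Green's functions on an exhaustion of $M$ by compact domains, and then to verify the listed properties one at a time, using the two standing hypotheses — stability (which provides non-parabolicity via the Michael--Simon Sobolev inequality, cf.\ Appendix \ref{app:harnack.sobolev}) and uniformly bounded curvature (which provides uniform local geometry, hence uniform elliptic estimates). Concretely, fix $p\in M$ and an exhaustion $\Omega_1\Subset\Omega_2\Subset\cdots$ with $\bigcup_i\Omega_i=M$ and $p\in\Omega_1$; let $u_i$ solve $\Delta u_i=0$ on $\Omega_i\setminus\{p\}$ with $u_i=0$ on $\partial\Omega_i$ and with the prescribed logarithmic-type singularity (normalized fundamental solution) at $p$. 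By the maximum principle $0<u_1\le u_2\le\cdots$, and non-parabolicity is exactly the statement that this sequence does not diverge; I would cite \cite[Theorem 10.1]{Li:harmonic.lectures} together with the Michael--Simon Sobolev inequality of Appendix \ref{app:harnack.sobolev}, which on a stable minimal hypersurface in $\RR^{n+1}$ with $n>2$ gives the required Euclidean-type Sobolev/Faber--Krahn input. Then $u:=\lim_i u_i$ is finite, and by Harnack (Yau's inequality, Appendix \ref{app:harnack.sobolev}) and interior Schauder estimates the convergence is in $C^\infty_{\mathrm{loc}}(M\setminus\{p\})$, giving (1). Property (2): $u>0$ is immediate from $u\ge u_1>0$; that $\inf u=0$ follows because if $\inf u=\delta>0$ then $u-\delta$ would be a positive superharmonic function, and subtracting it from the fundamental-solution construction would contradict minimality of the Green's function (equivalently, one runs the standard argument that $\inf u$ over $M\setminus\Omega_s$ tends to $0$).

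Property (3) is local near $p$ and is where bounded curvature enters decisively: in geodesic normal coordinates at $p$ the metric is $C^\infty$-close to Euclidean on a fixed ball $B_{r_0}(p)$ with $r_0$ depending only on $K$, so $u$ differs from $\frac1{4\pi}|x|^{-1}$ by a bounded harmonic function, and elliptic estimates upgrade this to the stated expansions for $u$, $|\nabla u|$, and the radial derivative of $|\nabla u|$; I would extract these from the scaling-invariant Schauder estimate applied on dyadic annuli shrinking to $p$. Property (4), finiteness of the Dirichlet energy away from a compact set, is the quantitative form of non-parabolicity: integrate $\Delta u=0$ against $u$ on $\Omega_s\setminus K$ (for $s$ a regular value), use the divergence theorem and the flux normalization $\int_{\Sigma_s}|\nabla u|=\int_{\Sigma_{s'}}|\nabla u|$ (independent of the regular value, by harmonicity), and pass to the limit; the boundary term on $\Sigma_s$ is controlled by $s$ times the flux, and letting the outer level $s\to0$ shows the energy is finite. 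Property (5), $u(x)\to 0$ as $d_M(x,p)\to\infty$, follows by combining (4) with a mean-value / Sobolev bound on the uniformly-geometric balls: finite energy forces the $L^2$ average of $u$ over unit balls centered far out to vanish at infinity, and Yau's Harnack inequality then forces the sup over such balls to vanish.

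Finally, (6) and (7) are about the level/super-level sets. For (6), note $\Omega_s=\{u\ge s\}\cup\{p\}$ is closed in $M$; it is bounded because (5) says $u<s$ outside some metric ball $B_R(p)$, and by completeness $\overline{B_R(p)}$ is compact, so $\Omega_s\subset\overline{B_R(p)}\cup\{p\}$ is compact. For (7), if $s$ is a regular value then $\Sigma_s=u^{-1}(s)$ is a smooth closed embedded hypersurface in $M$ (compact by (6)); connectedness of $\Sigma_s$ — and here the hypothesis that $M$ is simply connected and that $M$ has only one end (Appendix \ref{app:ends}) is used — follows because $M\setminus\Omega_s$ is connected (it is the complement of a compact set in a one-ended $M$) and $\Omega_s\setminus\{p\}$ is connected (by the maximum principle, $\{u>s\}$ cannot have a component not containing $p$, as $u$ would attain an interior max there), so a separation of $\Sigma_s=\partial\Omega_s$ into two pieces would produce, via simple connectivity of $M$, a corresponding splitting that contradicts connectedness of either side; alternatively one applies Sard plus the fact that a compact regular level set of a proper Morse-type exhaustion with connected sub- and super-level sets is connected. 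I expect property (3) — pinning down the precise $(1+o(1))$ asymptotics including the third-order radial-derivative statement — and the connectedness claim in (7) to be the two places requiring genuine care; the former because it needs scale-invariant elliptic estimates on shrinking annuli together with the curvature bound to replace the usual "metric is $C^2$ near a point," and the latter because it is really a topological statement dressed up analytically, leaning on the one-endedness result proved in Appendix \ref{app:ends} and on simple connectivity.
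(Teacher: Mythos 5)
Your overall strategy (Dirichlet Green's functions $u_i$ on an exhaustion, monotone limit, then verify the properties) is the paper's, and your route to non-parabolicity via the Michael--Simon Sobolev inequality is a legitimate variant (the paper instead treats the flat case separately and rules out $\mu_i=\max_{\partial\Omega_1}u_i\to\infty$ by plugging harmonic cut-offs into the stability inequality). But there is a genuine gap in your ordering of (4)--(6). Your proof of (4) applies the divergence theorem on $\Omega_s\setminus K$ and uses the flux identity $\int_{\Sigma_s}|\nabla u|=\int_{\Sigma_{s'}}|\nabla u|$; both steps presuppose that the superlevel sets $\{u\ge s\}$ (and the regions between level sets) are compact, which is exactly property (6). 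You deduce (6) from (5) and (5) from (4), so the argument is circular as written. The same issue recurs in your (5): ``finite energy forces the $L^2$ average of $u$ over far unit balls to vanish'' requires applying the Sobolev inequality to a compactly supported function, and $\varphi u$ is not compactly supported before (5)/(6) are known, while controlling the cut-off error term $u^2|\nabla\varphi|^2$ near infinity needs a bound on $u$ you do not yet have. The paper's device, which repairs both steps, is to run these estimates on compact approximations: the harmonic functions $w_i$ on $\Omega_i\setminus\Omega_1$ with $w_i=0$ on $\partial\Omega_i$ and $w_i=u$ on $\partial\Omega_1$, for which energy comparison gives the uniform Dirichlet bound (hence (4) in the limit), and for which $\varphi w_i$ is compactly supported Lipschitz, so Michael--Simon applies and yields $\int_{M\setminus\Omega_i}u^{2n/(n-2)}\to 0$; the paper then proves (5) by a pointed Cheeger--Gromov limit (using the curvature bound) plus the strong maximum principle. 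Your sketch can be fixed along these lines, but as stated the key analytic step fails.

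Two smaller points. In (7), your ``alternative'' fact --- that a compact regular level set is connected whenever the sub- and super-level sets are connected --- is false without simple connectivity (height function on a vertical torus), so it cannot carry the argument; the correct route is the paper's: bounded components of $M\setminus\Omega_s$ are excluded by the minimum principle, one-endedness (Cao--Shen--Zhu, Proposition \ref{prop:CSZ}) gives a single unbounded component, and if $\Sigma_s$ had two components one concatenates a path inside $\Omega_s$ with one outside to get a loop with nonzero algebraic intersection with one component of $\Sigma_s$, contradicting $\pi_1(M)=0$; your main version gestures at this but should be phrased via intersection numbers rather than ``a splitting contradicting connectedness,'' and your parenthetical ``complement of a compact set in a one-ended $M$ is connected'' also needs the maximum-principle step to rule out bounded components. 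Finally, a cosmetic slip: for $n>2$ the prescribed singularity of $u_i$ at $p$ is of fundamental-solution type $d_M(x,p)^{2-n}$, not logarithmic.
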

\begin{proof}
If $M$ is a flat $\RR^n$, then we can take $u$ to be the standard Green's function on $\RR^n$. As such, we assume below that $M$ is not flat. 

We construct a sequence of approximations to $u$ following \cite[\S 2]{Li:harmonic.lectures}. Choose an exhaustion $\Omega_1 \subset \Omega_2 \subset \dots \subset M$ with $\Omega_i$ a pre-compact open set, $p\in \Omega_i$, and $\partial\Omega_i$ smooth. There exists $u_i \in C^\infty_\textrm{loc}(\bar \Omega_i \setminus\{p\})$ so that 
\begin{enumerate}
\item [(1')] $\Delta u_i = 0$ on $\Omega_i\setminus \{p\}$,
\item [(2')] $u_i>0$ on $\Omega_i$ and $u_i = 0$ on $\partial \Omega_i$, and
\item [(3')]  $u_i(x) = c_n (1+o(1))d_M(x,p)^{2-n}$, $|\nabla u_i|(x) = (n-2)c_n(1+o(1)) d_M(x,p)^{1-n}$, as well as $\bangle{\nabla |\nabla u_i|,\tfrac{\nabla u_i}{|\nabla u_i|}} = (n-1)(n-2)c_n(1+o(1)) d_M(x,p)^{-n}$ as $x\to p$.
\end{enumerate}
This is simply the statement of existence of a Green's function on a compact manifold with boundary. 

Note that $ u_i(x) \leq u_j(x)$ for $i<j$ and $x \in\Omega_i\setminus\{p\}$. Indeed, by (2'), for any $\delta>0$, it holds that $ u_i(x) \leq (1+\delta)u_j(x)$ for $x \in \partial B_\eps(p)$ for all $\eps>0$ sufficiently small (depending on $\delta$). Since $0= u_i(x) < u_j(x)$ for $x \in \partial \Omega_i$, we thus find $u_i \leq (1+\delta)u_j $ on $\Omega_i \setminus B_\eps(p)$. Sending $\delta,\eps\to0$, the inequality $u_i(x) \leq u_j(x) $ follows. 

Set 
\[
\mu_i : = \max_{x \in \partial \Omega_1} u_i(x). 
\]
The maximum principle implies that
\begin{equation}\label{eq:greens.upper.lower.construct}
 u_1 \leq  u_i \leq u_1 + \mu_i 
\end{equation}
on $\Omega_1$. 
We have seen that $\{\mu_i\}_{i\in\NN}$ is increasing. 
\begin{claim}[{cf.\ \cite[Theorem 2.3]{Li:harmonic.lectures}}] The sequence $\{\mu_i\}_{i\in\NN}$ is bounded above.
\end{claim}
\begin{proof}[Proof of the claim]
Assume that $\mu_i\to\infty$. The Harnack inequality (cf. Proposition \ref{prop:diff.harnack}) and interior estimates implies that up to passing to a subsequence, $\mu_i^{-1} u_i$ converges in $C^\infty_\textrm{loc}(M\setminus\{p\})$ to some non-negative harmonic function $u \in C^\infty_\textrm{loc}(M\setminus\{p\})$. 

Because we have assumed that  $\mu_i\to\infty$, then we find that $0\leq u \leq 1$ on $\Omega_1$. On the other hand, the maximum principle (and Dirichlet boundary conditions for $u_i$) implies that
\[
\max_{\Omega_i \setminus \Omega_1} u_i = \mu_i, 
\]
so $u \leq 1$ on $M \setminus \Omega_1$. The maximum principle then implies that $u\equiv 1$ on $M\setminus\{p\}$.  

Choose $f_i \in C^{0,1}(M) \cap C^\infty(\bar \Omega_i\setminus \Omega_1)$ so that
\[\begin{cases}
\Delta f_i = 0 & \textrm{in } \Omega_i \setminus\Omega_1\\
f_i = 1 & \textrm{in } \Omega_1\\
f_i = 0 & \textrm{in } M\setminus \Omega_i. 
\end{cases}
\]
The maximum principle implies that $\mu_i^{-1}u_i \leq f$ on $\Omega_i\setminus \Omega_1$, so we see that $f_i\to 1$ in $C^0_\textrm{loc}(M) \cap C^\infty_\textrm{loc}(M\setminus \Omega_1)$. Taking $f_i$ in the stability inequality \eqref{eq:stable} we find (cf.\ \cite[Theorem 10.1]{Li:harmonic.lectures})
\[
\int_M |A_M|^2 f_i^2 \leq \int_{\Omega_i\setminus\Omega_1} |\nabla f_i|^2 = - \int_{\partial\Omega_1} D_\nu f_i
\]
for $\nu$ the outwards pointing unit normal to $\Omega_1$. Letting $i\to\infty$, since $f_i$ converges to $1$, we find that
\[
\int_M |A_M|^2  = 0,
\]
a contradiction since we assumed that $M$ was not flat. 
\end{proof}
Write $\mu_i\to\mu_\infty$. The claim, the Harnack inequality (cf. Proposition \ref{prop:diff.harnack}), and interior estimates allow us to pass to a subsequence so that $u_i$ converges to a harmonic function $u$ in $C^\infty_\textrm{loc}(M\setminus\{p\})$. Note that $u > 0$ by the maximum principle. 

By \eqref{eq:greens.upper.lower.construct} we have 
\[
u_1 \leq u \leq u_1 + \mu_\infty. 
\]
After shifting $u$ so that $\inf  u = 0$, we see that we have established (1)-(3). (The derivative estimates in (3) follow because $u-u_1$ is bounded near $p$ and thus extends across $p$.) 

We now establish property (4). Consider the solution to the following problem
\begin{equation}\label{eq:comparison.dirichlet.wi}
\begin{cases}
\Delta w_i = 0 & \textrm{in } \Omega_i\setminus \Omega_1\\
w_i = 0 & \textrm{on }\partial\Omega_i\\
w_i = u  & \textrm{on }\partial\Omega_1  \\
\end{cases}
\end{equation}
By the maximum principle (and since $u_i$ is increasing to $u$), $u_i \leq w_i \leq u$ on $\Omega_i \setminus \Omega_1$. Thus, we find that $w_i$ converges to $u$ in $C^\infty_\textrm{loc}(M\setminus \Omega_1)$. Furthermore, we have that
\[
\int_{\Omega_i} |\nabla w_i|^2 \leq \int_{\Omega_1} |\nabla w_1|^2 
\]
for $i\in\NN$. Passing this inequality to the limit, we have proven (4). 

We now consider property (5) (cf.\ \cite[Remark 2.4(d)]{Ni:mean.value}). We claim that for any (intrinsically) diverging sequence $\{x_i\}_{i\in\NN}\subset M$ it holds that $u(x_i) \to 0$.  Choose $\varphi \in C^\infty(M)$ so that $\varphi \equiv 0$ in $\Omega_2$ and $\varphi \equiv 1$ on $M \setminus \Omega_3$. Consider the function $\varphi w_i$ where $w_i$ is as in \eqref{eq:comparison.dirichlet.wi}. Note that $\varphi w_i \in C^{0,1}_c(M)$. By the Michael--Simon--Sobolev inequality (cf.\ Proposition \ref{prop:MSS}) we have
\[
\left(\int_M (\varphi w_i)^{\frac{2n}{n-2}} \right)^{\frac{n-2}{n}} \leq C \int_M \varphi^2 |\nabla w_i|^2 + w_i^2 |\nabla \varphi|^2.
\]
Because $w_i$ has uniformly bounded Dirichlet energy and $\nabla \varphi$ is compactly supported we can let $i\to\infty$ (recalling that $w_i$ converges to $u$ in $C^\infty_\textrm{loc}(M\setminus\Omega_1)$ to find
\[
\left(\int_M (\varphi u)^{\frac{2n}{n-2}} \right)^{\frac{n-2}{n}} < \infty. 
\]
In particular,
\begin{equation}\label{eq:sob.applied.harm.fct.infty}
\lim_{i\to\infty} \left(\int_{M\setminus \Omega_i} u^{\frac{2n}{n-2}} \right)^{\frac{n-2}{n}} = 0. 
\end{equation}
We now assume (for contradiction) that 
\[
\lim_{i\to\infty} \sup_{M\setminus\Omega_i} u = a \in (0,\infty). 
\]
If this holds, we can find a (intrinsically) diverging sequence $\{x_i\}_{i\in\NN}\subset M$ with $u(x_i) \to a$ as $i\to\infty$. Passing to a subsequence we can assume that 
\[
d_M(x_i,\Omega_i) \to \infty
\]
as $i\to\infty$. Because $M$ has uniformly bounded curvature, Schauder estimates yield bounds on all derivatives of curvature. Thus, we can pass $(M,g_M,x_i)$ to the limit $(\tilde M,\tilde g,\tilde x)$ in the pointed Cheeger--Gromov sense (where $g_M$ is the induced metric on $M$). Note that by the Harnack inequality (cf.\ Proposition \ref{prop:diff.harnack}) and interior estimates we can also pass the harmonic function $u$ to a pointed limit $\tilde u$ (passing to a further subsequence). Note that $\tilde u$ is harmonic, $\tilde u(\tilde x) = a$ and $\tilde u \leq a$. Thus, the maximum principle implies that $\tilde u \equiv a$. On the other hand, \eqref{eq:sob.applied.harm.fct.infty} implies that $\tilde u \equiv 0$. This is a contradiction, completing the proof of property (5). 

Properties (3) and (5) imply property (6).

We now consider property (7). For $s$ a regular value, $\Omega_s = \{u \geq s\}$ is a compact set with smooth boundary $\Sigma_s = u^{-1}(s)$ a closed (regular) hypersurface.
\begin{claim}
For $s \in (s_0,\infty)$ a regular value of $u$, $M\setminus \Omega_s$ has exactly one component and it is unbounded. 
\end{claim}
\begin{proof}[Proof of the claim]
Assume that $M\setminus \Omega_s$ has a bounded component $\Gamma$. Note that $0 < u < s$ on $\Gamma$ and $u = s$ on $\partial\Gamma$. Thus, $u$ attains its minimum on the interior of $\Gamma$. This is a contradiction since $u$ is not constant. This shows that $M\setminus \Omega_s$ is disconnected then $M$ has at least two ends. This would contradict the one-ended result of Cao--Shen--Zhu (cf.\ Proposition \ref{prop:CSZ}). 
\end{proof}

To finish the proof of (7), if $\Sigma_s$ were disconnected, then by the claim we can construct a loop with non-trivial algebraic intersection with one of the components of $\Sigma_s$ (concatenate a path connecting two of the components of $\Sigma_s$ inside of $\Omega_s$ with one outside of $\Omega_s$). This contradicts the assumed simple connectivity of $M$. 
\end{proof}

\subsection{Integration over level sets of the Green's function} Given $M^n\to\RR^{n+1}$  a complete, non-compact, connected, simply connected  two-sided stable minimal immersion with uniformly bounded curvature. For $p\in M$ fixed, consider the Greens' function $u \in C^\infty_\textnormal{loc}(M\setminus\{p\})$ constructed in Proposition \ref{prop:Greens}. Set $\cR$ the regular values of $u$ and $\cS$ the singular values of $u$. Recall that we have defined $\Sigma_s : = u^{-1}(s)$. Below we will use this notation even when $s\in \cS$ is a singular value. 
\begin{lemm}\label{lemm:reg.open.dense}
The set of regular values $\cR$ is open and dense in $(0,\infty)$. 
\end{lemm}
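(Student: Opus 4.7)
The plan is to split the statement into two standard parts. For openness I would show that the set $\cS$ of singular values is closed in $(0,\infty)$ by a straightforward compactness argument relying on property (6) of Proposition \ref{prop:Greens}. For density I would invoke Sard's theorem after checking that $u$ surjects onto $(0,\infty)$.

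In more detail: fix $s_0 \in \cR$ and suppose for contradiction that there is a sequence $s_n \to s_0$ with $s_n \in \cS$. By definition, pick $x_n \in M \setminus \{p\}$ with $u(x_n) = s_n$ and $\nabla u(x_n) = 0$. For all $n$ large we have $s_n > s_0/2$, hence $x_n \in \Omega_{s_0/2}$, which is compact by Proposition \ref{prop:Greens}(6). Moreover, the gradient asymptotics in Proposition \ref{prop:Greens}(3) show that $|\nabla u|$ blows up near $p$, so the $x_n$ stay in a compact set away from $p$ where $u$ is smooth. Passing to a subsequence, $x_n \to x_\infty$, and continuity yields $u(x_\infty) = s_0$ with $\nabla u(x_\infty) = 0$, contradicting $s_0 \in \cR$. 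This establishes openness.

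For density, the key observation is that $u \colon M \setminus \{p\} \to (0,\infty)$ is surjective. Indeed, by Proposition \ref{prop:Greens}(3) $u$ attains arbitrarily large values near $p$, by (5) $u$ decays to $0$ at infinity, and $M \setminus \{p\}$ is connected (it is simply connected by hypothesis and the removal of a single point from a manifold of dimension $\geq 3$ does not disconnect it), so the intermediate value theorem gives $u(M \setminus \{p\}) = (0,\infty)$. Applying Sard's theorem to the smooth function $u$, the set of critical values has Lebesgue measure zero, so $\cS$ has measure zero inside $(0,\infty)$, hence $\cR$ is dense.

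I do not expect a genuine obstacle here; both halves are soft. The only small point worth being careful about is not accidentally counting $0$ or $\infty$ as singular values: by restricting to the compact set $\Omega_{s_0/2}$ in the openness argument we avoid the behavior at infinity, and property (3) handles the puncture at $p$. The density step is essentially immediate from Sard once surjectivity is noted.
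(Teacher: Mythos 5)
Your argument is correct and is essentially the paper's: the paper compresses the openness step into the observation that $u\colon M\setminus\{p\}\to(0,\infty)$ is proper (which follows from compactness of $\Omega_s$ together with the near-pole asymptotics), and your sequence argument with $\Omega_{s_0/2}$ and the blow-up of $|\nabla u|$ near $p$ is exactly a direct unpacking of that properness, while density is by Sard in both. Your surjectivity remark is harmless but unnecessary, since Sard already gives that the critical values form a Lebesgue-null subset of $(0,\infty)$, so $\cR$ is dense whether or not every value is attained.
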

\begin{proof}
Proposition \ref{prop:Greens} implies that $u : M\setminus\{p\} \to (0,\infty)$ is proper. This easily is seen to imply openness of $\cR$. Density follows from Sard's theorem as usual. 
\end{proof}

For any $s \in (0,\infty)$ we set
\[
\Sigma_s^* = \{x\in M : u(x) = s, |\nabla u|(x) > 0\}. 
\]
Note that $\Sigma_s^*$ is a smooth (possibly incomplete) hypersurface in $M$. By Proposition \ref{prop:crit.harm}, $\dim_\textrm{Haus}(\Sigma_s\setminus \Sigma_s^*) \leq n-2$. In particular, we see that $\cH^{n-1}(\Sigma_s^*) < \infty$ and for any function $f \in C^\infty_\textrm{loc}(M)$, 
\[
\int f d\cH^{n-1}\lfloor{\Sigma_s} = \int_{\Sigma_s^*} f 
\]
where the right hand side is taken with respect to the induced Riemannian volume form on $\Sigma_s^*$. 

\begin{lemm}\label{lemm:int.level.set.cont}
For $f \in C^0_\textnormal{loc}(M\setminus \{p\})$, the function $s \mapsto \int_{\Sigma_s^*} f$ is continuous. 
\end{lemm}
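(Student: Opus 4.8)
The plan is to treat continuity as a local statement in $s$: fix $s_0\in(0,\infty)$, pick $0<a<s_0<b$, and work on $\mathcal{K}:=u^{-1}([a,b])$, which by Proposition \ref{prop:Greens} is a compact subset of $M\setminus\{p\}$ containing every $\Sigma_s$ with $s\in[a,b]$. On $\mathcal{K}$ both $f$ and $|\nabla u|$ are bounded. Writing $G(s):=\int_{\Sigma_s^*}f$, I would exhibit $G|_{[a,b]}$ as a uniform limit of continuous functions, which suffices.

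The approximants are obtained by cutting off near the critical set $\mathcal{C}:=\{|\nabla u|=0\}$. Fix a nondecreasing $\chi\in C^\infty([0,\infty))$ with $\chi\equiv0$ on $[0,1]$ and $\chi\equiv1$ on $[2,\infty)$, and set $\psi_\rho:=\chi(|\nabla u|/\rho)$ for $\rho>0$. Since $|\nabla u|$ vanishes only on $\mathcal{C}$, near which $\psi_\rho\equiv0$, the function $\psi_\rho$ is smooth on $M\setminus\{p\}$, is supported in $\{|\nabla u|\ge\rho\}$, equals $1$ on $\{|\nabla u|\ge2\rho\}$, and increases to $\mathbf{1}_{\{|\nabla u|>0\}}$ as $\rho\downarrow0$. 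I would then put $G_\rho(s):=\int_{\Sigma_s^*}f\psi_\rho$ and prove two statements. First, each $G_\rho$ is continuous: on the open set $\{|\nabla u|>\rho/2\}$ the function $u$ is a submersion, so the flow of $\nabla u/|\nabla u|^2$ (along which $u$ increases at unit speed) identifies, for $|\tau|$ small, the piece of $\Sigma_{s_0+\tau}$ meeting $\supp\psi_\rho$ with a fixed subset of $\Sigma_{s_0}\cap\{|\nabla u|>\rho/2\}\subset\Sigma_{s_0}^*$, which has finite $\cH^{n-1}$-measure by Proposition \ref{prop:crit.harm}; changing variables and applying dominated convergence (the Jacobian tends to $1$ and the integrand is bounded) gives continuity of $G_\rho$ at $s_0$.

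Second --- and this is the crux --- $G_\rho\to G$ uniformly on $[a,b]$. One has the elementary bound
\[
|G(s)-G_\rho(s)|=\Big|\int_{\Sigma_s^*}f(1-\psi_\rho)\Big|\le\|f\|_{L^\infty(\mathcal{K})}\,\cH^{n-1}\!\big(\Sigma_s^*\cap\{|\nabla u|<2\rho\}\big),
\]
so it is enough to show $\sup_{s\in[a,b]}\cH^{n-1}(\Sigma_s^*\cap\{|\nabla u|<2\rho\})\to0$ as $\rho\to0$. Since $\mathcal{C}\cap\mathcal{K}$ is compact with $\dim_\textrm{Haus}\le n-2$ (a property of critical sets of harmonic functions, cf.\ Appendix \ref{app:harmonic}), it has vanishing $\cH^{n-1}$-measure, so given $\eps>0$ it can be covered by finitely many balls $B_{r_k}(q_k)$ with $\sum_k r_k^{n-1}<\eps$. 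A compactness argument then shows $\{|\nabla u|<2\rho\}\cap\mathcal{K}\subset\bigcup_k B_{2r_k}(q_k)$ once $\rho$ is small (otherwise points $x_m$ with $|\nabla u(x_m)|\to0$ would stay outside $\bigcup_k B_{2r_k}(q_k)$, yet subconverge into $\mathcal{C}\cap\mathcal{K}$). Using a scale-invariant bound $\cH^{n-1}(\Sigma_s^*\cap B_r(q))\le Cr^{n-1}$ valid for $q\in\mathcal{K}$, $r$ small, with $C$ independent of $s$, one concludes $\cH^{n-1}(\Sigma_s^*\cap\{|\nabla u|<2\rho\})\le 2^{n-1}C\eps$ uniformly in $s\in[a,b]$, and lets $\eps\to0$. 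Combining the two statements, $G$ is a locally uniform limit of continuous functions, hence continuous at $s_0$.

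The main obstacle is this last ingredient: the uniform-in-$s$ density estimate $\cH^{n-1}(\Sigma_s^*\cap B_r(q))\le Cr^{n-1}$. It is a statement about the nodal sets of the harmonic functions $u-s$, and its uniformity in $s$ rests on the fact that the $\cH^{n-1}$-measure of such a nodal set in a ball is controlled by the local doubling (frequency) index of $u$ alone; I would quote this from the harmonic-function facts collected in Appendix \ref{app:harmonic}, where it should appear alongside the dimension bound on $\mathcal{C}$. Everything else --- the localization to $\mathcal{K}$, the construction of $\psi_\rho$, the submersion/flow argument for $G_\rho$, and the covering estimate --- is routine once this bound is available.
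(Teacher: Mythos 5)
Your argument is correct and follows essentially the same route as the paper: the paper covers $\Sigma_s\setminus\Sigma_s^*$ by balls of small total $(n-1)$-content (possible since $\dim_\textnormal{Haus}\le n-2$), notes that the contribution of $\Sigma_{s+\delta}$ inside these balls is $O(\eps)$ uniformly in $\delta$ by the level-set volume bound of Proposition \ref{prop:crit.harm} (which is indeed stated uniformly in the level $s$), and observes that the integral over the complementary region, where $|\nabla u|>0$, varies continuously. Your cutoff $\psi_\rho$, flow argument, and uniform-convergence framing are a more elaborate packaging of the same two ingredients, so the proof goes through as you outline it.
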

\begin{proof}
Cover $\Sigma_s\setminus\Sigma_s^*$ by balls $B_{r_i}(x_i)$ with $\sum_i r_i^{n-2+\eta} < \eps$ and $r_i \leq 1$. It is clear that 
\[
\delta \mapsto \int_{\Sigma_{s+\delta} \setminus \cup_i B_{r_i}(x_i)} f
\]
is continuous at $\delta = 0$. Furthermore, by Proposition \ref{prop:crit.harm}, we have 
\[
\int_{\Sigma_{s+\delta} \cap \cup_i B_{r_i}(x_i)} f \leq C \sum_i \cH^{n-1}(B_{r_i}(x_i)) \leq C \sum_i r_i^{n-1} \leq C \eps,
\]
where the constant $C$ is independent of $\delta$ small. Putting these facts together, the assertion follows. 
\end{proof}
We now consider the continuous functions
\begin{align}
F(s) := \int_{\Sigma_s^*} |\nabla u|^2 \label{eq:defi.F}\\
\cA(s) : = \int_{\Sigma_s^*} |A_M|^2 \label{eq:defi.A},
\end{align}
defined for $s \in (0,\infty)$. 

\begin{lemm}\label{lemm:F.abs.cont}
The function $F(s)$ is locally Lipschitz on $(0,\infty)$.
\end{lemm}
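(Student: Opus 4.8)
The plan is to reduce the statement to a divergence‑theorem identity between two level sets combined with the co‑area formula. Fix a compact interval $[a_0,b_0]\subset(0,\infty)$. Since $u$ is proper (Proposition \ref{prop:Greens}) with $u\to\infty$ near $p$, the set $W:=\{a_0\le u\le b_0\}$ is a compact subset of $M\setminus\{p\}$, so $u|_W$ is smooth and $C_W:=\sup_W|\Hess u|<\infty$. Moreover $s\mapsto\cH^{n-1}(\Sigma_s^*)=\int_{\Sigma_s^*}1$ is continuous by Lemma \ref{lemm:int.level.set.cont} (applied to $f\equiv 1$), hence $L:=\sup_{s\in[a_0,b_0]}\cH^{n-1}(\Sigma_s^*)<\infty$. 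I will show that $F$ is Lipschitz on $[a_0,b_0]$ with constant $C_W L$; since $[a_0,b_0]$ is arbitrary this gives local Lipschitz continuity.

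First I would prove the bound for regular values $a<b$ in $[a_0,b_0]$. For such $a,b$ the open set $U:=\{a<u<b\}$ has compact closure in $M\setminus\{p\}$ and smooth boundary $\Sigma_a\sqcup\Sigma_b$. Consider the vector field $V:=|\nabla u|\,\nabla u$; it is $C^1$ on $M\setminus\{p\}$, the only delicate point being at critical points of $u$, where $V=|\nabla u|^2\cdot\tfrac{\nabla u}{|\nabla u|}$ vanishes to second order and so is differentiable there with vanishing derivative. Where $\nabla u\ne 0$ one computes, using $\Delta u=0$,
\[
\Div V=\langle\nabla|\nabla u|,\nabla u\rangle=\frac{\Hess u(\nabla u,\nabla u)}{|\nabla u|},
\]
so that $|\Div V|\le|\Hess u|\,|\nabla u|\le C_W|\nabla u|$ on $U$, and $\Div V=0$ where $\nabla u=0$. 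Since $b$ (resp. $a$) is a regular value, the outer unit normal of $U$ along $\Sigma_b$ (resp. $\Sigma_a$) is $\nabla u/|\nabla u|$ (resp. $-\nabla u/|\nabla u|$), and the divergence theorem yields
\[
F(b)-F(a)=\int_{\Sigma_b}|\nabla u|^2-\int_{\Sigma_a}|\nabla u|^2=\int_U\Div V .
\]
Therefore $|F(b)-F(a)|\le C_W\int_U|\nabla u|$, and by the co-area formula, using $\cH^{n-1}(\Sigma_s\setminus\Sigma_s^*)=0$ (as $\dim_\textrm{Haus}(\Sigma_s\setminus\Sigma_s^*)\le n-2$ by Proposition \ref{prop:crit.harm}),
\[
\int_U|\nabla u|=\int_a^b\cH^{n-1}(\Sigma_s^*)\,ds\le L\,(b-a).
\]

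Finally, the regular values are dense in $(0,\infty)$ (Lemma \ref{lemm:reg.open.dense}) and $F$ is continuous (Lemma \ref{lemm:int.level.set.cont} with $f=|\nabla u|^2$), so approximating arbitrary $a<b$ in $[a_0,b_0]$ by regular values shows $|F(b)-F(a)|\le C_W L\,(b-a)$ for all such $a,b$. This completes the proof. The only steps needing genuine care are the justification of the divergence theorem for $V=|\nabla u|\,\nabla u$ — i.e. that $V$ is $C^1$ across the critical set of $u$ with $\Div V$ extending by $0$ there — and the routine replacement of $\Sigma_s$ by $\Sigma_s^*$ in the co-area formula, which is precisely where Proposition \ref{prop:crit.harm} is invoked; neither presents a serious obstacle.
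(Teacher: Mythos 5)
Your proof is correct and is essentially the paper's argument: the divergence theorem applied to $|\nabla u|\,\nabla u$ between two regular level sets, the Kato-type bound $|\Div(|\nabla u|\nabla u)|\le |\Hess u|\,|\nabla u|$, the co-area formula together with the level-set area bound coming from Proposition \ref{prop:crit.harm}, and finally density of regular values (Lemma \ref{lemm:reg.open.dense}) plus continuity of $F$ (Lemma \ref{lemm:int.level.set.cont}) to handle singular values. The only deviation is that you apply the divergence theorem directly to $V=|\nabla u|\nabla u$ where the paper regularizes with $(|\nabla u|^2+\delta)^{1/2}\nabla u$ and lets $\delta\to 0$; this is legitimate because $V$ is locally Lipschitz (by Kato, $|DV|\le 2|\nabla u|\,|D^2u|$ off the critical set, and $V$ vanishes quadratically at critical points), but note that your stated justification --- differentiability at critical points with vanishing derivative --- gives slightly less than the $C^1$ regularity you assert, so you should either add that $DV$ is continuous (it tends to $0$ along the critical set by the same Kato bound) or invoke the divergence theorem for Lipschitz vector fields.
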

\begin{proof}
For a fixed compact subset $K\subset (0,\infty)$, consider $s<t$ regular values of $u$ with $s,t\in K$. Below we will allow the Lipschitz constant to depend on $K$.

Consider the region $\Omega_{s,t} = u^{-1}([s,t])$. Observe that $\Omega_{s,t}$ is a compact region with smooth boundary $\Sigma_s\cup\Sigma_t$. Write $\eta$ for the  outwards pointing unit normal to $\Omega_{s,t}$ and note that $\eta = \tfrac{\nabla u}{|\nabla u|}$ along $\Sigma_s$ and $\eta = -\tfrac{\nabla u}{|\nabla u|}$. We have
\begin{align*}
F(s)-F(t) & = \int_{\partial\Omega_{s,t}} \bangle{|\nabla u|\nabla u,\eta} \\
& = \lim_{\delta\to 0}\int_{\partial\Omega_{s,t}}  \bangle{(|\nabla u|^2 + \delta)^{\frac 12} \nabla u,\eta}\\
& = \lim_{\delta\to 0}\int_{\Omega_{s,t}}  \Div\left( (|\nabla u|^2 + \delta)^{\frac 12} \nabla u \right)\\
& = \lim_{\delta\to 0}\int_{\Omega_{s,t}}  -\tfrac 12  (|\nabla u|^2 + \delta)^{-\frac 12} \bangle{\nabla |\nabla  u|^2,\nabla u}.
\end{align*}
Note that 
\[
(|\nabla u|^2 + \delta)^{-\frac 12} \bangle{\nabla |\nabla  u|^2,\nabla u} \leq |\nabla |\nabla  u|^2| \leq 2 |\nabla u| |D^2u|,
\]
using the Kato inequality. From this, we find 
\[
|F(s) - F(t)| \leq \int_{\Omega_{s,t}} |\nabla u| |D^2 u| \leq C \int_s^t \cH^{n-1}(\Sigma_\tau) d\tau,
\]
using the co-area formula (where $C=C(M,g,u,K)$). Using Proposition \ref{prop:crit.harm}, to bound the volume of $\Sigma_\tau$, the assertion follows (the Lipchitz estimate for $s,t$ possibly singular follows from the above estimate and continuity of $F(t)$ proven in Lemma \ref{lemm:int.level.set.cont}). 
\end{proof}

\section{Stern's Bochner formula and stability} \label{sec:Stern}
Recently, Stern has discovered \cite{Stern} that one can combine the Bochner formula with the Schoen--Yau rearrangement \cite{SY:3d-torus} of the Gauss equation to relate the scalar curvature of a three-manifold to the behavior of a harmonic function on that manifold. In this section we consider this idea in the context of the stability operator for a stable minimal $3$-dimensional hypersurface (cf.\ \cite{SY:harmonic.stable.minimal}). 
 
We consider $M^3\to\RR^{4}$  a complete, connected, simply connected, two-sided, stable minimal immersion with uniformly bounded curvature. For $p\in M$ fixed, consider the Greens' function $u \in C^\infty_\textnormal{loc}(M\setminus\{p\})$ constructed in Proposition \ref{prop:Greens}. Recall the definition of $F(s),\cA(s)$ in \eqref{eq:defi.F} and \eqref{eq:defi.A}. By Lemma \ref{lemm:int.level.set.cont}, $F(s),\cA(s)$ are continuous on $(0,\infty)$. 

\begin{prop}\label{prop:stern.bochner.stable}
For any function $\varphi \in C^{0,1}_c((0,\infty))$ it holds 
\begin{align*}
& \int_{0}^{\infty} \varphi(s)^{2}  \cA(s)\, ds  \leq \frac{8\pi}{3} \int_{0}^{\infty} \varphi(s)^{2} \, ds  +  \frac 43 \int_{0}^{\infty} \varphi'(s)^{2} F(s) \, ds . 
\end{align*}
\end{prop}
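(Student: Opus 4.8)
The idea is to feed $f=|\nabla u|^{1/2}\varphi(u)$ into the stability inequality \eqref{eq:stable}, rewrite the Dirichlet term via Stern's rearrangement of the Bochner formula along the level sets $\Sigma_s$ of $u$, and close the estimate with Gauss--Bonnet on $\Sigma_s$. Since $\varphi$ is compactly supported in $(0,\infty)$, Proposition \ref{prop:Greens} shows $f$ is supported in a set of the form $\{a\le u\le b\}$, which is a compact subset of $M\setminus\{p\}$; thus there is nothing to check at the pole or at infinity, and the only delicate point is the critical set $\{\nabla u=0\}$, on which $|\nabla u|^{1/2}$ is not Lipschitz and the pointwise Bochner identity below is invalid. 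I would handle this by running the whole argument with the smooth, compactly supported regularization $f_\eps=(|\nabla u|^2+\eps)^{1/4}\varphi(u)$ and Stern's $\eps$-regularized identity for $\sqrt{|\nabla u|^2+\eps}$, and only at the end let $\eps\to0$; by Proposition \ref{prop:crit.harm} the set $\{\nabla u=0\}$ has Hausdorff dimension $\le n-2=1$, which is exactly what makes this limit and the coarea identities below harmless.

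\textbf{Expanding the stability inequality.} Set $w=|\nabla u|$. A direct computation gives, on $\{w>0\}$,
\[
|\nabla f|^2=\tfrac14 w^{-1}\varphi(u)^2|\nabla w|^2+\varphi(u)\varphi'(u)\bangle{\nabla w,\nabla u}+w^3\varphi'(u)^2,\qquad f^2=w\,\varphi(u)^2.
\]
By the coarea formula $\int_M|A_M|^2f^2=\int_0^\infty\varphi(s)^2\cA(s)\,ds$ and $\int_M w^3\varphi'(u)^2=\int_0^\infty\varphi'(s)^2F(s)\,ds$, while the cross term integrates by parts (the support being compact and disjoint from $p$) to $\tfrac12\int_M\bangle{\nabla(\varphi(u)^2),\nabla w}=-\tfrac12\int_M\varphi(u)^2\,\Delta w$.

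\textbf{Stern's Bochner identity and the combination.} On the regular set, splitting $D^2u$ into tangential and normal parts along $\Sigma_s$ gives $|D^2u|^2=w^2|\secondfund_{\Sigma_s}|^2+|\nabla^{\Sigma_s}w|^2+|\nabla w|^2$ and $\bangle{\nabla w,\nu}^2=w^2H_{\Sigma_s}^2$ for $\nu=\nabla u/w$; combining the Bochner formula $\tfrac12\Delta w^2=|D^2u|^2+\Ric_M(\nabla u,\nabla u)$ with the traced Gauss equation of $\Sigma_s\subset M^3$ and the Gauss equation $R_M=-|A_M|^2$ of $M^3\subset\RR^4$ yields the pointwise identity
\[
\Delta w=\tfrac12 w|\secondfund_{\Sigma_s}|^2+w^{-1}|\nabla^{\Sigma_s}w|^2-\tfrac12 w|A_M|^2-wK_{\Sigma_s}+\tfrac12 wH_{\Sigma_s}^2 .
\]
Substituting this for $\Delta w$ in the integrated-by-parts cross term, the term $\tfrac14\int_M|A_M|^2w\varphi(u)^2=\tfrac14\int_0^\infty\varphi^2\cA\,ds$ can be moved to the left, leaving a factor $\tfrac34$ there. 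On the right, $|\nabla w|^2=|\nabla^{\Sigma_s}w|^2+w^2H_{\Sigma_s}^2$ makes the gradient terms collapse to $-\tfrac14\int_M w^{-1}|\nabla^{\Sigma_s}w|^2\varphi(u)^2\le0$, which we drop together with the nonnegative $|\secondfund_{\Sigma_s}|^2$ term; and by the coarea formula plus Gauss--Bonnet $\int_{\Sigma_s}K_{\Sigma_s}=2\pi\chi(\Sigma_s)\le4\pi$ (for a.e.\ $s$, using that $\Sigma_s$ is a closed connected orientable surface by Proposition \ref{prop:Greens}(7)) the curvature term is $\le 2\pi\int_0^\infty\varphi^2\,ds$. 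This gives $\tfrac34\int_0^\infty\varphi^2\cA\,ds\le2\pi\int_0^\infty\varphi^2\,ds+\int_0^\infty\varphi'^2F\,ds$, which is the claim after multiplying by $\tfrac43$.

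\textbf{Main obstacle.} The work is entirely in the rigor of the last two steps at $\eps>0$: running Stern's $\eps$-regularized Bochner identity for $\sqrt{|\nabla u|^2+\eps}$, checking that the $\eps$-error terms have a favorable sign or vanish as $\eps\to0$, and justifying the coarea manipulations and the integration by parts across $\{\nabla u=0\}$ (via $\dim_{\textrm{Haus}}\{\nabla u=0\}\le1$ and the continuity in Lemmas \ref{lemm:int.level.set.cont}--\ref{lemm:F.abs.cont}), as well as confirming $\Sigma_s$ is connected for a.e.\ $s$ so that $\chi(\Sigma_s)\le2$. It is important not to lose the constant: obtaining exactly $\tfrac43$ (rather than a larger number, which would break the later absorption step) depends on using the level-set decomposition of $|D^2u|^2$ in place of the cruder Kato inequality, so that the full factor in front of $|A_M|^2$ survives.
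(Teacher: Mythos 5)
Your proposal is essentially the paper's own argument: the same test function $|\nabla u|^{1/2}\varphi(u)$, regularized as $(|\nabla u|^2+\eps)^{1/4}\varphi(u)$, in the stability inequality, the same Stern rearrangement of the Bochner term along level sets, Gauss--Bonnet with connectedness of $\Sigma_s$, and the same constants $\tfrac{8\pi}{3}$ and $\tfrac43$; your formal computation on the regular set is correct. The one step you defer (``checking that the $\eps$-error terms have a favorable sign or vanish'') is exactly where the paper does its work, so it is worth recording the device: the paper never integrates by parts against $\Delta|\nabla u|$ across the critical set, but stays with the smooth $e_\delta=(|\nabla u|^2+\delta)^{1/4}$ and first shows, using the improved Kato inequality $\tfrac38|\nabla|\nabla u|^2|^2\le|\nabla u|^2|D^2u|^2$ together with $\Ric_M\ge -|A_M|^2$ from the Gauss equation, that the full integrand $|A_M|^2e_\delta^2+\tfrac12 e_\delta^{-2}\bigl(|D^2u|^2-\tfrac38 e_\delta^{-4}|\nabla|\nabla u|^2|^2+\Ric_M(\nabla u,\nabla u)\bigr)$ is pointwise nonnegative, so that Fatou's lemma can be applied as $\delta\to0$; note that this nonnegativity must be secured \emph{before} inserting your level-set identity for $\Delta w$, since that identity is only valid where $\nabla u\neq0$. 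The singular values are then handled not just through $\dim_{\textrm{Haus}}\{\nabla u=0\}\le 1$ but by restricting the coarea/Stern/Gauss--Bonnet manipulations to $u^{-1}(\fA)$, where $\fA$ is the complement of a small open set $\fB$ of values containing all singular values; the discarded portion of the right-hand side is $\int_{u^{-1}(\fB)}\varphi'(u)^2|\nabla u|^3$, which is uniformly small because the integrand is bounded, and one concludes by letting $|\fB\cap\supp\varphi|\to0$. With that mechanism substituted for your deferred step, your argument coincides with the paper's proof.
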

\begin{proof}
Consider the following regularization of the square root of the gradient of $u$
\[
e_\delta = (|\nabla u|^2 + \delta)^{\frac 14}. 
\]
Note that $e_{\delta}\in C^{\infty}_{\textrm{loc}}(M\setminus\{p\})$ and
\[
\nabla e_{\delta} = \tfrac 1 4 e_{\delta}^{-3} \nabla |\nabla u|^{2},
\]
as well as
\[
\Delta e_{\delta} = \tfrac 1 4 e_{\delta}^{-3} \Delta |\nabla u|^{2} - \tfrac{3}{16} e_{\delta}^{-7} |\nabla |\nabla u|^{2}|^{2}.
\]
Applying the Bochner formula, we find 
\begin{align*}
\Delta e_{\delta} & = \tfrac 1 2 e_{\delta}^{-3} (|D^{2}u|^{2} + \Ric_{M}(\nabla u,\nabla u))- \tfrac{3}{16} e_{\delta}^{-7} |\nabla |\nabla u|^{2}|^{2}\\
& = \tfrac 1 2 e_{\delta}^{-3} (|D^{2}u|^{2} - \tfrac{3}{8} e_{\delta}^{-4} |\nabla |\nabla u|^{2}|^{2} + \Ric_{M}(\nabla u,\nabla u)). 
\end{align*}
We now consider $\psi \in C^\infty_0(M\setminus\{p\})$ and then take $f=e_\delta  \psi$ in the stability inequality \eqref{eq:stable} yielding
\begin{align*}
& \int_M |A_M|^2 e_\delta^2 \psi^2  \leq \int_M |\nabla e_\delta|^2 f^2 + \tfrac 12 \bangle{\nabla e_\delta^2 ,\nabla \psi^2} + e_\delta^2 |\nabla \psi|^2\\
& = \int_M |\nabla e_\delta|^2 f^2 - \tfrac 12 \psi^2 \Delta e_\delta^2  + e_\delta^2 |\nabla \psi|^2\\ 
& = \int_M  - e_\delta \psi^2 \Delta e_\delta  + e_\delta^2 |\nabla \psi|^2\\
& = \int_M - \tfrac 1 2 e_{\delta}^{-2} (|D^{2}u|^{2} - \tfrac{3}{8} e_{\delta}^{-4} |\nabla |\nabla u|^{2}|^{2} + \Ric_{M}(\nabla u,\nabla u))\psi^{2}   +  e_\delta^2 |\nabla \psi|^2.
\end{align*}
Rearranging this yields
\begin{align*}
& \int_{M} (|A_M|^2 e_\delta^2 +  \tfrac 1 2 e_{\delta}^{-2} (|D^{2}u|^{2} - \tfrac{3}{8} e_{\delta}^{-4} |\nabla |\nabla u|^{2}|^{2} + \Ric_{M}(\nabla u,\nabla u))) \psi^2  \leq \int_{M} e_\delta^2 |\nabla \psi|^2.
\end{align*}
Note that $\Ric_{M}\geq -|A_{M}|^{2}$ by the Gauss equations. Furthermore, the improved Kato inequality yields
\[
\tfrac 38 |\nabla |\nabla u|^{2}|^{2} \leq |\nabla u|^{2}|D^{2}u|^{2} \leq e_{\delta}^{4} |D^{2}u|^{2}.
\]
From this, we find that 
\[
|A_M|^2 e_\delta^2 +  \tfrac 1 2 e_{\delta}^{-2} (|D^{2}u|^{2} - \tfrac{3}{8} e_{\delta}^{-4} |\nabla |\nabla u|^{2}|^{2} + \Ric_{M}(\nabla u,\nabla u)) \geq 0.
\]
Let $\fB$ be an open subset of $(0,\infty)$ containing all singular values of $u$, and $\fA=(0,\infty)\setminus \fB$. We thus find
\begin{align*}
& \int_{\fA} (|A_M|^2 e_\delta^2 +  \tfrac 1 2 e_{\delta}^{-2} (|D^{2}u|^{2} - \tfrac{3}{8} e_{\delta}^{-4} |\nabla |\nabla u|^{2}|^{2} + \Ric_{M}(\nabla u,\nabla u))) \psi^2\\
& \leq \int_{\fA} e_\delta^2 |\nabla \psi|^2 + \int_{\fB} e_\delta^2 |\nabla \psi|^2
\end{align*}
We can send $\delta \to 0$ and apply Fatou's lemma to find 
\begin{align*}
& \int_{u^{-1}(\fA)} (|A_M|^2 |\nabla u| +  \tfrac 1 2 |\nabla u|^{-1} (|D^{2}u|^{2} - \tfrac{3}{2}  |\nabla |\nabla u||^{2} + \Ric_{M}(\nabla u,\nabla u))) \psi^2\\
& \leq \int_{u^{-1}(\fA)} |\nabla u| |\nabla \psi|^2 + \int_{u^{-1}(\fB)} |\nabla u| |\nabla \psi|^2
\end{align*}
By the co-area formula, we find 
\begin{align*}
& \int_{\fA} \left( \int_{\Sigma_{s}} (|A_M|^2  +  \tfrac 1 2 |\nabla u|^{-2} (|D^{2}u|^{2} - \tfrac{3}{2}  |\nabla |\nabla u||^{2} + \Ric_{M}(\nabla u,\nabla u))) \psi^2\right) ds\\
& \leq \int_{\fA} \left( \int_{\Sigma_{s}} |\nabla \psi|^2 \right) ds + \int_{u^{-1}(\fB)} |\nabla u| |\nabla \psi|^2.
\end{align*}
For $\varphi \in C_{c}^{\infty}((0,\infty))$, take $\psi = \varphi(u)$ to yield
\begin{align*}
& \int_{\fA} \varphi(s)^{2} \left( \int_{\Sigma_{s}} |A_M|^2  +  \tfrac 1 2 |\nabla u|^{-2} (|D^{2}u|^{2} - \tfrac{3}{2}  |\nabla |\nabla u||^{2} + \Ric_{M}(\nabla u,\nabla u)) \right) ds\\
& \leq \int_{\fA} \varphi'(s)^{2} \left( \int_{\Sigma_{s}} |\nabla u|^2\right) ds+ \int_{u^{-1}(\fB)} \varphi'(u)^{2} |\nabla u|^{3} 
\end{align*}

If $s \in \fA$, note that $\nu = \frac{\nabla u}{|\nabla u|}$ is a unit normal to $\Sigma_{s}$. We now follow the ideas used in \cite[Theorem 1]{Stern}. The traced Gauss equations yield
\[
2 \Ric_M(\nu,\nu) = R_M - 2K_{\Sigma_s} - |A_{\Sigma_s}|^2 + H_{\Sigma_s}^2. 
\]
Similarly, using the Gauss equations for $M^3\to \RR^4$ we have $R_M = - |A_M|^2$. The scalar second fundamental form of $\Sigma_s$ satisfies
\[
A_{\Sigma_s} = |\nabla u|^{-1} D^2 u|_{\Sigma_s}
\]
so
\[
|\nabla u|^2 |A_{\Sigma_s}|^2 = |D^2u|^2 - 2 |\nabla |\nabla u||^2 + D^2 u(\nu,\nu)^2
\]
and (because $u$ is harmonic) 
\[
|\nabla u|^2 H_{\Sigma_s}^2 = D^2 u(\nu,\nu)^2.
\]
Thus, 
\[
\Ric_M(\nabla u,\nabla u) = - \tfrac 12 |\nabla u|^{2} |A_{M}|^{2} - |\nabla u|^{2} K_{\Sigma_s} - \tfrac 12 |D^2u|^2 +  |\nabla |\nabla u||^2. 
\]
along $\Sigma_{s}$. Thus,
\begin{align*}
& \int_{\fA} \varphi(s)^{2} \left( \int_{\Sigma_{s}} \tfrac 34 |A_M|^2  +    \tfrac 1 4 |\nabla u|^{-2}( |D^{2}u|^{2} -   |\nabla |\nabla u||^{2})  \right) ds  \\
& \leq \int_{\fA} \varphi(s)^{2} \left( \int_{\Sigma_{s}} \tfrac 12 K_{\Sigma_{s}} \right) ds+   \int_{\fA} \varphi'(s)^{2} \left( \int_{\Sigma_{s}} |\nabla u|^2 \right) ds  + \int_{u^{-1}(\fB)} \varphi'(u)^{2} |\nabla u|^{3} 
\end{align*}
By (6) in Proposition \ref{prop:Greens}, for $s\in \cR$, $\Sigma_{s}$ is connected, so $
\int_{\Sigma_{s}} K_{\Sigma_{s}} \leq 4\pi$ (by Gauss--Bonnet). Using this, the Kato inequality, and the definition of $F(s),\cA(s)$, we find
\begin{align*}
& \int_{\fA}   \varphi(s)^{2}  \cA(s)\, ds \leq \frac{8\pi}{3} \int_{\fA} \varphi(s)^{2} \, ds  +  \frac 43 \int_{\fA} \varphi'(s)^{2} F(s) \, ds +\frac 43  \int_{u^{-1}(\fB)} \varphi'(u)^{2} |\nabla u|^{3} 
\end{align*}
Since $\varphi'(u)^{2} |\nabla u|^{3}$ is uniformly bounded, we can send $|\fB \cap \supp \varphi| \to 0$ and conclude 
\begin{align*}
& \int_{0}^{\infty} \varphi(s)^{2}  \cA(s) \, ds \leq \frac{8\pi}{3} \int_{0}^{\infty} \varphi(s)^{2}   \, ds +  \frac 43 \int_{0}^{\infty} \varphi'(s)^{2} F(s)  \, ds. 
\end{align*}
A standard approximation argument for $\varphi$ completes the proof. 
\end{proof}

\begin{rema}
By using the improved Kato inequality it is easy to generalize the previous argument to prove that
\[
\int_0^\infty \varphi(s)^2 \left( \int_{\Sigma_s} \tfrac 34 |A_M|^2 + \tfrac{1}{12} |A_{\Sigma_s} |^2 -K_{\Sigma_s} \right) ds \leq \int_0^\infty \varphi'(s)^2 F(s) \, ds,
\]
where $A_{\Sigma_s}$ is the second fundamental form of $\Sigma_s$ in $M$. However, we will not need this expression in the sequel. 
\end{rema}

We thus see that the behavior of $F(s)$ near $0$ and $\infty$ determines the potential test functions $\varphi$ that can be used in the inequality from Proposition \ref{prop:stern.bochner.stable}. The behavior of $F(t)$ as $t\to\infty$ is independent of the geometry of $M$ ($F(t)$ behaves like the Green's function on $\RR^3$). However, the behavior as $t\to0$ is of crucial importance to our argument. We begin with an \emph{a priori} bound that will be improved in the sequel. 

\begin{lemm}\label{lemm:F.apriori.bds}
It holds that $F(t) = O(t)$ as $t\to 0$ and $F(t) = (1+o(1))4\pi t^2$ as $t\to\infty$. 
\end{lemm}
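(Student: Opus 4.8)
The plan is to handle the two asymptotic regimes separately, both via the co-area formula applied to appropriate flux-type integrals of $|\nabla u|$. For the behavior as $t \to \infty$, I would use the precise asymptotics near the pole $p$ from property (3) of Proposition \ref{prop:Greens}: since $u(x) = \frac{1}{4\pi}(1+o(1)) d_M(x,p)^{-1}$ and $|\nabla u|(x) = \frac{1}{4\pi}(1+o(1)) d_M(x,p)^{-2}$, the level set $\Sigma_t$ for large $t$ is a near-geodesic-sphere of radius $r_t \approx \frac{1}{4\pi t}$, and on it $|\nabla u|^2 \approx (4\pi)^{-2} r_t^{-4} = (4\pi t)^2 \cdot t^2 / (4\pi)^2$… more cleanly, $|\nabla u| \approx 4\pi t^2$ on $\Sigma_t$. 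Since $M$ has bounded geometry, $\cH^{2}(\Sigma_t) = (1+o(1)) 4\pi r_t^2 = (1+o(1)) 4\pi (4\pi t)^{-2} = (1+o(1)) (4\pi t^2)^{-1} \cdot \frac{1}{4\pi}\cdot (4\pi)^2$; multiplying, $F(t) = \int_{\Sigma_t^*}|\nabla u|^2 = (1+o(1)) (4\pi t^2)^2 \cdot \frac{1}{4\pi t^2} \cdot \frac{1}{4\pi} \cdot \ldots$. Rather than chase constants heuristically, the rigorous route is to note $F$ is the flux $\int_{\Sigma_t}|\nabla u|$ weighted appropriately is not conserved, but $\int_{\Sigma_t}\langle \nabla u, \nu\rangle = \int_{\Sigma_t}|\nabla u|$ is the constant flux (normalized to $1$ by the choice of Green's function normalization), so by Cauchy–Schwarz and the asymptotics one gets $F(t) = (1+o(1)) 4\pi t^2$; I would instead directly substitute the $C^1$-asymptotics of property (3) into the definition of $F$ and compute.

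For the behavior as $t \to 0$, the key input is property (4) of Proposition \ref{prop:Greens}: $\int_{M\setminus K}|\nabla u|^2 < \infty$ for $K$ a compact set containing $p$. Fix such a $K$ contained in some $\Omega_{s_0}$. For $t < s_0$ small, apply the co-area formula to write $\int_{\{u \le t\}} |\nabla u|^3 = \int_0^t F(s)\, ds$; but this does not immediately bound $F(t)$ pointwise. Instead I would use monotonicity/convexity-type reasoning: the function $t \mapsto \int_{\{u\le t\}}|\nabla u|^2 \,\cdot$ nothing — better, observe that the flux $\Phi(t) := \int_{\Sigma_t^*}|\nabla u| = 1$ is constant (this follows from $\Delta u = 0$ and the divergence theorem on $u^{-1}([t,t'])$, using properness from property (6) and the fact that the Hausdorff-codimension-$2$ critical set contributes nothing, cf.\ the manipulations in Lemma \ref{lemm:F.abs.cont}). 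Then by Cauchy–Schwarz, $1 = \left(\int_{\Sigma_t^*}|\nabla u|\right)^2 \le \cH^{2}(\Sigma_t^*) \cdot F(t)$, so $F(t) \ge \cH^2(\Sigma_t^*)^{-1}$ — this is a lower bound, the wrong direction. For the upper bound $F(t) = O(t)$ I would instead argue: by the co-area formula and property (4), $\int_0^{s_0}\left(\int_{\Sigma_s^*}|\nabla u|^2\right) ds = \int_{\{u < s_0\}\setminus\{p\}}|\nabla u|^3$, which may be infinite, so truncate. The cleanest approach: integrate the differential inequality. Since $F$ is locally Lipschitz (Lemma \ref{lemm:F.abs.cont}) and one can compute $F'(t) = \int_{\Sigma_t}\left(\ldots\right)$; using the first-variation/Bochner structure one shows $F'(t) \ge c\, F(t)/t$ or similar, which upon integration from $t$ to $s_0$ and using $F(s_0) < \infty$ gives $F(t) = O(t)$.

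\textbf{Main obstacle.} The hard part will be establishing the $t \to 0$ bound $F(t) = O(t)$ rigorously, because near the "end" of $M$ (the $u \to 0$ regime) we have essentially no geometric control — only the finite-energy statement $\int_{M\setminus K}|\nabla u|^2 < \infty$ — and the critical set of $u$ can a priori cluster there. I expect the actual argument to extract $F(t) = O(t)$ from a Caccioppoli/flux estimate: combine the constancy of the flux $\int_{\Sigma_t}|\nabla u|$ with an integrated form of the identity in Lemma \ref{lemm:F.abs.cont} (which bounds $|F(s) - F(t)|$ by $\int_s^t \cH^{2}(\Sigma_\tau)\,d\tau$ up to $|\nabla u||D^2u|$ terms), plus the finite-energy bound to control $\int \cH^2(\Sigma_\tau)\,d\tau$ on average, then upgrade an average bound to a pointwise one using that $F$ cannot oscillate too wildly. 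Getting the sharp linear rate (as opposed to, say, $O(t^{1-\eps})$) will require care, and this is precisely the \emph{a priori} bound the introduction flags as "caus[ing] complications."
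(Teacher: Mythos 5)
Your treatment of the $t\to\infty$ regime is essentially the paper's: the expansion $F(t)=(1+o(1))4\pi t^2$ does follow directly by inserting the asymptotics of property (3) of Proposition \ref{prop:Greens} (together with the constancy of the flux $\int_{\Sigma_t}|\nabla u|$, which you correctly note follows from the divergence theorem and Lemma \ref{lemm:int.level.set.cont}). That half is fine.

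The $t\to 0$ bound, however, is where your proposal has a genuine gap, and the missing ingredient is much simpler than what you are reaching for. You have one half of the argument (flux constancy, $\int_{\Sigma_t}|\nabla u|\equiv\mathrm{const}$), but the second half is not a differential inequality for $F$ nor an averaging/upgrade scheme: it is the pointwise gradient estimate of Yau (Proposition \ref{prop:diff.harnack}). Your claim that near the end ``we have essentially no geometric control'' is not accurate in this setting: Proposition \ref{prop:Greens} is stated under the hypothesis $|A_M|\le K$, so by the Gauss equations $\Ric_M$ is uniformly bounded below, and applying the gradient estimate at unit scale at any point of $\Sigma_t$ with $t$ bounded away from $\infty$ (so the point is a definite distance from the pole $p$) gives $|\nabla u|\le C\,u = C\,t$ on $\Sigma_t$. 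Then $F(t)=\int_{\Sigma_t}|\nabla u|^2\le Ct\int_{\Sigma_t}|\nabla u|=O(t)$, with no need to touch the critical set, the area of $\Sigma_t$, or property (4). By contrast, the routes you sketch do not close: the inequality $F'(t)\ge c\,F(t)/t$ is asserted, not proved, and establishing anything of that monotonicity type is exactly the hard content of the paper's Section on the Munteanu--Wang formula, not an \emph{a priori} input; and the proposed ``upgrade an average bound to a pointwise one'' via the Lipschitz estimate of Lemma \ref{lemm:F.abs.cont} is too vague to yield the sharp linear rate (the Lipschitz constant there is local and gives no decay as $t\to0$). So as written the $O(t)$ bound --- which is precisely what the rest of the paper consumes --- is not established.
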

\begin{proof}
The bound as $t\to\infty$ follows in a straightforward manner from the asymptotics in (3) in Proposition \ref{prop:Greens}. 

For the other assertion, by integrating $\Delta u =0$ over $\{t\leq u \leq \tau\}$ (where $t,\tau\in \cR$) we find
\[
\int_{\Sigma_t} |\nabla u| = \int_{\Sigma_\tau} |\nabla u|. 
\]
By Lemma \ref{lemm:int.level.set.cont}, $t\mapsto \int_{\Sigma_t} |\nabla u| $ is constant. Because $M$ has uniformly bounded Ricci curvature, we can apply the Harnack inequality (Proposition \ref{prop:diff.harnack}) to obtain
\[
F(t) = \int_{\Sigma_t} |\nabla u|^2 \leq C t \int_{\Sigma_t} |\nabla u|. 
\]
(for $t$ bounded away from $\infty$). This proves the assertion. 
\end{proof}

\begin{coro}\label{coro:cutoff}
We have
\begin{align*}
& \limsup_{\ell\searrow 0} \int_{\ell}^{t}   \cA(s)\, ds + t^2 \int_{t}^{1}  s^{-2}  \cA(s)\, ds   \leq O(t)  +  \frac 43 \int_{t}^{1} t^2 s^{-4} F(s) \, ds
\end{align*}
as $t\to 0$. 
\end{coro}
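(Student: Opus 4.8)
The plan is to deduce Corollary~\ref{coro:cutoff} from Proposition~\ref{prop:stern.bochner.stable} by choosing the test function $\varphi$ so that $\varphi(s)^2$ is comparable to the indicator of $[\ell,t]$ plus $t^2 s^{-2}$ on $[t,1]$, and then passing to the limit $\ell \searrow 0$ using the \emph{a priori} bound $F(s)=O(s)$ from Lemma~\ref{lemm:F.apriori.bds}. Concretely, for $0<\ell<t<1$ I would take $\varphi_{\ell,t}$ to be the continuous, piecewise-smooth function that is: a log-cutoff ramp from $0$ to $1$ on $[\ell^2,\ell]$ (or any interval shrinking to the pole), identically $1$ on $[\ell,t]$, equal to $t s^{-1}$ on $[t,1]$, and then cut off to $0$ on $[1,2]$ by a fixed cutoff. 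Then $\varphi_{\ell,t}(s)^2 = 1$ on $[\ell,t]$ and $= t^2 s^{-2}$ on $[t,1]$, so the left-hand side of Proposition~\ref{prop:stern.bochner.stable} dominates
\[
\int_\ell^t \cA(s)\,ds + t^2 \int_t^1 s^{-2}\cA(s)\,ds.
\]

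Next I would estimate the three contributions to the right-hand side $\frac{8\pi}{3}\int \varphi^2 + \frac 43 \int (\varphi')^2 F$. The $\int_0^\infty \varphi^2\,ds$ term is bounded by $\int_0^1 1\,ds + t^2\int_t^1 s^{-2}\,ds + O(1) = O(1)$, which is absorbed into the $O(t)$? — no: it is $O(1)$, not $O(t)$; I need to be careful here. Looking back at the statement, the bound is $O(t)$, so the $\frac{8\pi}{3}\int\varphi^2$ term must actually be handled differently, or the ``$O(t)$'' in the corollary is really ``$O(1)$'' as $t\to 0$ — rereading, the corollary asserts $\le O(t)$, so in fact one should only keep $\varphi^2$ supported where it matters; the contribution $\frac{8\pi}{3}\int_\ell^t \varphi^2 = \frac{8\pi}{3}(t-\ell) = O(t)$ and $\frac{8\pi}{3}t^2\int_t^1 s^{-2}ds \le \frac{8\pi}{3} t^2 \cdot t^{-1} = O(t)$, while the log-cutoff and the $[1,2]$ cutoff contribute $o(1)$ and $O(t^2)$ respectively as $\ell\to 0$; so after sending $\ell\to 0$ the $\varphi^2$ terms give $O(t)$. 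For the gradient term, $\varphi' = 0$ on $[\ell,t]$, $\varphi'(s)^2 = t^2 s^{-4}$ on $[t,1]$ contributing $\frac 43 \int_t^1 t^2 s^{-4} F(s)\,ds$ (the desired right-hand term), the $[1,2]$ cutoff contributes $O(1)\cdot t^2 \cdot \sup_{[1,2]}F = O(t^2)$, and the log-cutoff ramp near the pole contributes $\frac 43\int_{\ell^2}^\ell (\log(1/\ell))^{-2} s^{-2} F(s)\,ds \le C(\log(1/\ell))^{-2}\int_{\ell^2}^\ell s^{-2}\cdot Cs\,ds = C(\log(1/\ell))^{-2}\log(1/\ell) \to 0$ using $F(s)=O(s)$. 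This is exactly where the \emph{a priori} bound $F=O(s)$ (rather than $F=O(s^2)$) is precisely enough to kill the log-cutoff term — this is the ``crucial step'' alluded to in the introduction.

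The remaining point is the passage to the limit on the left: $\int_\ell^t \cA(s)\,ds$ is increasing in $\ell \searrow 0$ and the right-hand side is bounded uniformly in $\ell$, so $\limsup_{\ell\searrow 0}\int_\ell^t\cA(s)\,ds$ exists and the inequality passes to the limit term by term. I should also note that Proposition~\ref{prop:stern.bochner.stable} is stated for $\varphi\in C^{0,1}_c((0,\infty))$, which my $\varphi_{\ell,t}$ is (it is Lipschitz and compactly supported in $(0,\infty)$ for each fixed $\ell>0$), so no additional approximation is needed before taking $\ell\to 0$. The main obstacle is purely bookkeeping: organizing the four pieces of $\varphi_{\ell,t}$ so that every error term is either $O(t)$, $O(t^2)$, or $o_{\ell\to 0}(1)$, and in particular verifying that the log-cutoff near the pole vanishes in the limit — which, as noted, uses $F(s)=O(s)$ in an essential way and would fail for a slower-decaying log-cutoff or a weaker \emph{a priori} bound.
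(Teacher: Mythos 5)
Your proposal is correct and essentially identical to the paper's argument: the paper plugs the same piecewise test function (a logarithmic ramp near the pole, $1$ on $[\ell,t]$, $ts^{-1}$ on $[t,1]$, and a linear cutoff $t(2-s)$ on $[1,2]$) into Proposition \ref{prop:stern.bochner.stable}, uses $F(s)=O(s)$ from Lemma \ref{lemm:F.apriori.bds} to make the ramp's gradient contribution vanish, and verifies the remaining terms are $O(t)$ uniformly in $\ell$ before taking $\ell\searrow 0$. The only cosmetic difference is that the paper runs the ramp over $[\eps\ell,\ell]$ and sends $\eps\to 0$ first, whereas you tie it to $\ell$ via $[\ell^2,\ell]$ and take a single limit, which is equally valid.
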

\begin{proof}
For $\eps \in (0,1)$ and $\ell < t$, we consider the following function
\[
\varphi_\eps(s) = 
\begin{cases}
0 & s \in (0,\eps\ell) \\
1-\tfrac{\log s - \log\ell}{\log \eps} & s \in [\eps\ell,\ell)\\
1 & s \in [\ell,t)\\
ts^{-1}  & s \in [t,1)\\
t(2-s) & s \in [1,2)\\
0 & s \in (2,\infty). 
\end{cases}
\]
We have that
\begin{align*}
& \int_0^1 \varphi_\eps(s)^2 \, ds = O(t).
\end{align*} 
Using $F(t) = O(t)$ as $t\to 0$ from Lemma \ref{lemm:F.apriori.bds}, we can further estimate 
\[
\int_0^{\ell} \varphi_\eps'(s)^2 F(s) \, ds = O(|\log\eps|^{-2})\int_{\eps\ell}^\ell s^{-1} \, ds= O(|\log \eps|^{-1}).
\]
As such, taking $\varphi_\eps$ in Proposition \ref{prop:stern.bochner.stable} and sending $\eps\to 0$ we find 
\begin{align*}
& \int_{\ell}^{t}   \cA(s)\, ds + t^2 \int_{t}^{1}  s^{-2}  \cA(s)\, ds   \leq O(t)  +  \frac 43 \int_{t}^{1} t^2 s^{-4} F(s) \, ds,
\end{align*}
where the $O(t)$ is bounded independently of $\ell>0$. This completes the proof. 
\end{proof}

\section{An extension of Munteanu--Wang's monotonicity formula}\label{sec:MW}

Munteanu--Wang have recently established \cite{MunteanuWang} a sharp monotonicity formula for the quantity $F(s)$ defined via a Green's function on a non-parabolic three-manifold with non-negative scalar curvature. In this section we refine this estimate in two ways to apply in the present situation. First of all, the minimal hypersurface does not have non-negative scalar curvature so we must keep track of the resulting defect term. Second, we have to regularize their argument to account for our relatively weak \emph{a priori} bounds for $F(s)$ as $s\to 0$.

We continue to consider $M^3\to\RR^{4}$  a complete, connected, simply connected,  two-sided stable minimal immersion with uniformly bounded curvature. For $p\in M$ fixed, consider the Greens' function $u \in C^\infty_\textnormal{loc}(M\setminus\{p\})$ constructed in Proposition \ref{prop:Greens}. Recall the definition of $F(s),\cA(s)$ in \eqref{eq:defi.F} and \eqref{eq:defi.A}. By Lemma \ref{lemm:int.level.set.cont}, $F(s),\cA(s)$ are continuous on $(0,\infty)$. Furthermore, by Lemma \ref{lemm:F.abs.cont}, $F(s)$ is locally absolutely continuous. Finally, if $\cR$ is the open and dense (by Lemma \ref{lemm:reg.open.dense}) set of regular values of $u$, we see that $F(s)$ is differentiable at all $s \in \cR$.

We consider $\alpha \mapsto \lambda(\alpha)$ where
\begin{equation}\label{eq:lambda}
\lambda(\alpha) : = \alpha + 1 - \sqrt{(\alpha+1)(1-\tfrac 13 \alpha)}.
\end{equation}
Note that $\lambda(\alpha) \in \RR$ for $\alpha \in [-1,3]$. 

\begin{lemm}\label{lemm:alpha}
There is $\alpha_0 \in (1,2)$ so that if $\alpha \in (\alpha_0,2)$ then $\alpha - \tfrac 32 \lambda(\alpha) + 1 > 0$. 
\end{lemm}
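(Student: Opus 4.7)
The plan is a direct algebraic computation. Set $h(\alpha) := \alpha - \tfrac{3}{2}\lambda(\alpha) + 1$ and substitute the definition of $\lambda$ from \eqref{eq:lambda}. After cancelling, one obtains
\[
h(\alpha) \;=\; \tfrac{3}{2}\sqrt{(\alpha+1)\bigl(1-\tfrac{1}{3}\alpha\bigr)} \;-\; \tfrac{1}{2}(\alpha+1).
\]

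For $\alpha \in (-1,3)$ both terms are real and the first is non-negative while the second is positive, so $h(\alpha)>0$ is equivalent to
\[
3\sqrt{(\alpha+1)\bigl(1-\tfrac{1}{3}\alpha\bigr)} \;>\; \alpha+1.
\]
Both sides are positive on $\alpha \in (-1,3)$, so I would square and factor out $\alpha+1$, obtaining the equivalent inequality
\[
(\alpha+1)\bigl[9\bigl(1-\tfrac{1}{3}\alpha\bigr) - (\alpha+1)\bigr] \;=\; 4(\alpha+1)(2-\alpha) \;>\; 0,
\]
which holds precisely when $\alpha \in (-1,2)$. In particular, $h(\alpha) > 0$ on the entire interval $(-1,2)$, and $h(2)=0$, so \emph{any} choice $\alpha_0 \in (1,2)$ yields the conclusion. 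For definiteness one may take, e.g., $\alpha_0 = \tfrac{3}{2}$.

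There is no real obstacle here: the statement reduces to a polynomial inequality after one squaring, and the root $\alpha=2$ of $h$ is exactly the threshold, matching the boundary of the interval $(\alpha_0,2)$ in the statement. I would present the calculation in two or three display lines and remark that the lemma is sharp in the sense that $h(2)=0$, which presumably explains why the proof strategy in the paper needs $\alpha$ strictly less than $2$ (so that the "absorption" constant $\tfrac{3}{2}\lambda(\alpha)/(\alpha+1)$ remains $<1$).
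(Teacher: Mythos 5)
Your computation is correct and complete. After substituting the definition of $\lambda$, the expression $h(\alpha)=\alpha-\tfrac32\lambda(\alpha)+1$ simplifies to $\tfrac32\sqrt{(\alpha+1)(1-\tfrac13\alpha)}-\tfrac12(\alpha+1)$, and squaring (legitimate since both sides are positive on $(-1,3)$) reduces the claim to $4(\alpha+1)(2-\alpha)>0$, so indeed $h>0$ on all of $(-1,2)$ with $h(2)=0$.

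Your route differs from the paper's: the paper merely observes that $h(2)=0$ and Taylor expands, obtaining $h(\alpha)=-(\alpha-2)+O((\alpha-2)^2)$, which shows $h>0$ for $\alpha$ close enough to (but less than) $2$ without ever pinning down the threshold $\alpha_0$. Your approach buys an explicit and sharper statement — any $\alpha_0\in(1,2)$ works, since $h>0$ on the whole interval $(-1,2)$ — at the modest cost of a squaring argument. The paper's Taylor expansion is shorter and is all that is needed, since later in the proof of Proposition \ref{prop:MW-monotonicity} one sends $\alpha\nearrow2$ anyway, so only the local behavior near $\alpha=2$ matters. Either proof is perfectly acceptable here; the asymptotic sharpness you point out (the coefficient $-1$ at $\alpha=2$, equivalently the factor $(2-\alpha)$ in your polynomial) is exactly what makes the subsequent limit $\alpha\nearrow2$ delicate.
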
 
\begin{proof}
Taylor expanding around $\alpha =2$ yields
\[
\alpha - \tfrac 32 \lambda(\alpha) + 1 = - (\alpha-2) + O((\alpha-2)^2). 
\]
This proves the assertion. 
\end{proof}

\begin{prop}[{cf.\ \cite[Theorem 3.1]{MunteanuWang}}] \label{prop:MW-monotonicity}
There is $C>0$ so that for $t \in (0,1)$, it holds that 
\begin{align*}
F(t)  
& \leq  C t^{3}   +  4\pi t^2  + \frac 14 t \liminf_{\ell\searrow 0} \int_\ell^t   \cA(s)\, ds  + \frac 14 t^{3}   \int_t^1 s^{-2} \cA(s)\, ds.
\end{align*}
\end{prop}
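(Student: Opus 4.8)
\medskip

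The plan is to adapt the Munteanu--Wang argument \cite[Theorem 3.1]{MunteanuWang}, which produces a monotonicity formula for $F$ on non-parabolic three-manifolds with $R \geq 0$ by applying Stern's rearrangement of the Bochner formula to the Green's function $u$ and integrating against a suitable power of $u$. The essential identity there is: on a level set $\Sigma_s = u^{-1}(s)$ (for $s$ regular), Stern's computation combined with Gauss--Bonnet on the connected surface $\Sigma_s$ yields a differential inequality for $F$ with a favorable sign coming from the Euler characteristic term $\int_{\Sigma_s} K_{\Sigma_s} \leq 4\pi$ (using connectedness from (7) of Proposition \ref{prop:Greens}). In our setting the extra ingredient is the Gauss equation $R_M = -|A_M|^2 \leq 0$, which enters with the ``wrong'' sign and produces a defect term controlled by $\cA(s)$; this is exactly the term we want on the right-hand side. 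Concretely, I would run the Stern--Bochner integration with the test weight $u^\beta$ (or more precisely a cutoff version thereof) to derive, for regular $t$, a second-order ODE-type inequality of the schematic form $F'(t) \cdot (\text{stuff}) \geq \frac{2}{t} F(t) - (\text{const}) - (\text{defect involving } \cA)$, then integrate from $t$ to $1$.

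\medskip

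The key steps, in order, would be: (i) Recall the pointwise Stern--Bochner inequality for $u$: writing things in terms of $|D^2 u|^2$, $|\nabla|\nabla u||^2$, and $\Ric_M(\nabla u, \nabla u)$, and using the improved Kato inequality $\tfrac{3}{2}|\nabla|\nabla u||^2 \leq |D^2 u|^2$ on regular level sets (valid since $\Delta u = 0$ in dimension $3$), together with the traced Gauss equation $2\Ric_M(\nu,\nu) = R_M - 2K_{\Sigma_s} - |A_{\Sigma_s}|^2 + H_{\Sigma_s}^2$ and $R_M = -|A_M|^2$, just as in the proof of Proposition \ref{prop:stern.bochner.stable}. (ii) Integrate the resulting inequality against the weight $|\nabla u|^{-1}\varphi(u)$ or similar and use the co-area formula to convert to an inequality among $F(s)$, $\cA(s)$, and $\int_{\Sigma_s} K_{\Sigma_s}$. (iii) Apply Gauss--Bonnet ($\int_{\Sigma_s} K_{\Sigma_s} \leq 4\pi$, connected) and choose the power of $u$ so that the $4\pi$ term integrates to the $4\pi t^2$ on the right-hand side, while the $t^3$ term absorbs the contribution near $\infty$ (where $F(s) \sim 4\pi s^2$ by Lemma \ref{lemm:F.apriori.bds}). (iv) The defect term from $-|A_M|^2$ splits, after the co-area formula and the appropriate integration against the weight, into the two pieces $\tfrac14 t \int_\ell^t \cA(s)\,ds$ and $\tfrac14 t^3 \int_t^1 s^{-2}\cA(s)\,ds$. (v) Handle the lower limit $\ell \searrow 0$: because we only have the weak \emph{a priori} bound $F(s) = O(s)$ from Lemma \ref{lemm:F.apriori.bds} (rather than $O(s^2)$), the boundary terms at $s = \ell$ in the integration by parts do not obviously vanish, so one must insert a logarithmic cutoff near $0$ (as in Corollary \ref{coro:cutoff}) and take a limit; this is why the statement has $\liminf_{\ell\searrow 0}$ rather than an honest integral from $0$. (vi) Also insert a cutoff near the pole $p$ and near the singular set $u^{-1}(\cS)$, using $\dim_{\textrm{Haus}}(\Sigma_s\setminus\Sigma_s^*)\leq n-2$ and Proposition \ref{prop:crit.harm} to show these contributions vanish in the limit, exactly as in Proposition \ref{prop:stern.bochner.stable}.

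\medskip

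I expect the main obstacle to be step (v): the regularization near $t = 0$. In Munteanu--Wang's original argument the sharp bound $F(t) = O(t^2)$ is available \emph{a priori} from $R \geq 0$, so the boundary terms in their integration by parts are controlled for free; here we are trying to \emph{prove} that bound, so we only have $F(t) = O(t)$ to work with, and the delicate point (cf.\ Remark \ref{rema:MW.ric}) is that the relevant boundary term in the rearranged Bochner integration behaves like $\ell^{-1} F(\ell) = O(1)$, which does \emph{not} tend to zero. The fix is to run the argument with a log-cutoff $\varphi_\eps$ supported away from $0$ (like the one in Corollary \ref{coro:cutoff}), check that the error introduced by the cutoff is $O(|\log\eps|^{-1})$ using $F = O(t)$, send $\eps \to 0$ first to get an inequality with a fixed lower limit $\ell$, and only then take $\liminf_{\ell\searrow 0}$, picking up the $\liminf$ in the statement. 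A secondary technical point is keeping careful track of all the numerical constants (the $\tfrac14$'s and the $\tfrac43$'s) through the Kato inequality and the choice of power of $u$, since—as the authors emphasize in the introduction—the precise value of these constants is what makes the later ``absorption'' step work; but this is bookkeeping rather than a genuine difficulty.
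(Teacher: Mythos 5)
Your outline follows the paper's overall route (Stern--Bochner applied to the Green's function, co-area, Gauss--Bonnet on the connected level sets, keeping the $-|A_M|^2$ defect, and Fubini to split the $\cA$ contribution into the two stated pieces), and you correctly isolate the crux: with only the a priori bound $F(s)=O(s)$, the boundary term at the lower limit, which for the natural weight $u^{-2}$ behaves like $\ell^{-1}F(\ell)=O(1)$, does not vanish. But your proposed fix---inserting a logarithmic cutoff $\varphi_\eps$ as in Corollary \ref{coro:cutoff}, sending $\eps\to0$ first, and then taking $\liminf_{\ell\searrow0}$---does not work in this setting. The log-cutoff saves Corollary \ref{coro:cutoff} only because the dangerous term there is \emph{quadratic} in the cutoff: $\int\varphi_\eps'(s)^2F(s)\,ds\le C|\log\eps|^{-2}\int_{\eps\ell}^{\ell}s^{-1}\,ds=O(|\log\eps|^{-1})$. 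The monotonicity statement, by contrast, comes from a first-order differential inequality for $F$; if you multiply it by $\varphi_\eps$ and integrate by parts, the cutoff enters \emph{linearly}, and the corresponding error is $\int_{\eps\ell}^{\ell}\varphi_\eps'(s)\,s^{-1}F(s)\,ds\le C|\log\eps|^{-1}\int_{\eps\ell}^{\ell}s^{-1}\,ds=O(1)$, which does not tend to zero. With your scheme the bad boundary contribution survives and you only recover $F(t)=O(t)$, not the claimed estimate.

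The paper's actual mechanism is different: the weight is $u^{-\alpha}$ with $\alpha\in(\alpha_0,2)$ strictly below $2$. After Green's identity (with the regularization $j_\delta=(|\nabla u|^2+\delta)^{1/2}$ and the singular-value set handled as in Proposition \ref{prop:stern.bochner.stable}), the improved Kato inequality together with Cauchy--Schwarz and H\"older applied to \eqref{eq:F.prime} produces the quadratic term $-\tfrac34 s^{-\alpha}F'(s)^2/F(s)$; this is linearized via $2s^{-1}F'F\le\lambda^{-1}F'^2+\lambda s^{-2}F^2$ with $\lambda=\lambda(\alpha)$ chosen so that $\tfrac34\lambda^2-\tfrac32\lambda(\alpha+1)+\alpha(\alpha+1)=0$, yielding a differential inequality for $\bigl(t^{\alpha-\frac32\lambda}F(t)\bigr)'$. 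Lemma \ref{lemm:alpha} guarantees $\alpha-\tfrac32\lambda(\alpha)+1>0$, so the boundary term $\ell^{\alpha-\frac32\lambda}F(\ell)=O(\ell^{\alpha-\frac32\lambda+1})$ vanishes using only $F=O(\ell)$; one then integrates, applies Fubini, takes $\liminf_{\ell\searrow0}$, and finally sends $\alpha\nearrow2$ to recover the sharp constants $\tfrac14$. Your sketch leaves both the reduction of the $|D^2u|^2$ term to $F'(s)^2/F(s)$ and the subsequent completion of the square implicit; these are not mere bookkeeping, since they are exactly where the constants that make the later absorption argument work are generated, and they are inseparable from the $\alpha<2$ perturbation that replaces your cutoff.
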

\begin{proof}
We begin by considering $t\in \cR$. Set $\nu = \frac{\nabla u}{|\nabla u|}$ (note that $\nu$ is \emph{inwards pointing} for the set $\Omega_s = \{u\geq s\}$). The family $t\mapsto \Sigma_t$ has normal velocity $|\nabla u|^{-1}\nu$. Furthermore, the mean curvature of $\Sigma_s$ satisfies
\[
H = - |\nabla u|^{-1}\bangle{\nabla |\nabla u|,\nu}. 
\]
With this convention, we have
\begin{align}
F'(t) & = \int_{\Sigma_t} |\nabla u|^{-1} \bangle{\nabla |\nabla u|^2,\nu} + |\nabla u|^{-1} H |\nabla u|^2  = \int_{\Sigma_t}  \bangle{\nabla |\nabla u|,\nu} \label{eq:F.prime}
\end{align}
Fix $\alpha \in (\alpha_0,2)$ as in Lemma \ref{lemm:alpha} and write $\lambda = \lambda(\alpha)$ (defined in \eqref{eq:lambda}). Note that 
\begin{align*}
t^{-\alpha} F'(t) + \alpha t^{-\alpha-1}F(t) & = \int_{\Sigma_t} u^{-\alpha} \bangle{\nabla |\nabla u|,\nu} + \alpha u^{-\alpha-1} |\nabla u|^2\\
& = \int_{\Sigma_t} u^{-\alpha} \bangle{\nabla |\nabla u|,\nu} - |\nabla u| \bangle{\nabla u^{-\alpha},\nu}.
\end{align*}
Set
\[
j_\delta = (|\nabla u|^2 + \delta)^{\frac 12} 
\]
so that
\begin{align*}
t^{-\alpha} F'(t) + \alpha t^{-\alpha-1}F(t) & = \lim_{\delta\searrow 0}  \int_{\Sigma_t} u^{-\alpha} \bangle{\nabla j_\delta,\nu} - j_\delta \bangle{\nabla u^{-\alpha},\nu}.
\end{align*}

Now, if $0<t<\tau\leq 1$, $t,\tau \in \cR$, Green's second identity on $\Omega_{t,\tau} : = \{t \leq u \leq \tau\}$ thus yields
\begin{align*}
& \int_{\Sigma_t} u^{-\alpha} \bangle{\nabla j_\delta,\nu} - j_\delta \bangle{\nabla u^{-\alpha},\nu} - \int_{\Sigma_\tau} u^{-\alpha} \bangle{\nabla j_\delta,\nu} - j_\delta \bangle{\nabla u^{-\alpha},\nu} \\
& = - \int_{\Omega_{t,\tau}} u^{-\alpha} \Delta j_\delta - j_\delta \Delta u^{-\alpha} .
\end{align*}
(Recall that $\nu$ is inwards pointing for $\Omega_s$.) Because $u$ is harmonic 
\[
\Delta u^{-\alpha} = \alpha(\alpha+1) u^{-\alpha-2}|\nabla u|^2.
\] 
Furthermore, the Bochner formula yields 
\[
 \Delta j_\delta = j_\delta^{-1} (|D^2 u|^2 - \tfrac 1 4 j_\delta^{-2} |\nabla |\nabla u|^2|^2) + j_\delta^{-1} \Ric_M(\nabla u,\nabla u). 
\]
Thus, we find 
\begin{align*}
& \int_{\Sigma_t} u^{-\alpha} \bangle{\nabla j_\delta,\nu} - j_\delta \bangle{\nabla u^{-\alpha},\nu} - \int_{\Sigma_\tau} u^{-\alpha} \bangle{\nabla j_\delta,\nu} - j_\delta \bangle{\nabla u^{-\alpha},\nu} \\
& = - \int_{\Omega_{t,\tau}} u^{-\alpha} j_\delta^{-1} (|D^2 u|^2 - \tfrac 1 4 j_\delta^{-2} |\nabla |\nabla u|^2|^2) + u^{-\alpha} j_\delta^{-1} \Ric_M(\nabla u,\nabla u)  \\
& \qquad +  \int_{\Omega_{t,\tau}}  \alpha(\alpha+1) u^{-\alpha-2} j_\delta |\nabla u|^2.
\end{align*}
Note that the Kato inequality yields
\[
|D^2 u|^2 - \tfrac 1 4 j_\delta^{-2} |\nabla |\nabla u|^2|^2 \geq 0. 
\]
Let $\fB$ be an open subset of $(0,\infty)$ containing all singular values of $u$, and $\fA=(0,\infty)\setminus \fB$. We thus find
\begin{align*}
& \int_{\Sigma_t} u^{-\alpha} \bangle{\nabla j_\delta,\nu} - j_\delta \bangle{\nabla u^{-\alpha},\nu} - \int_{\Sigma_\tau} u^{-\alpha} \bangle{\nabla j_\delta,\nu} - j_\delta \bangle{\nabla u^{-\alpha},\nu} \\
& \leq - \int_{(t,\tau)\cap \fA} \left( \int_{\Sigma_s} u^{-\alpha} |\nabla u|^{-1} j_\delta^{-1} (|D^2 u|^2 - \tfrac 1 4 j_\delta^{-2} |\nabla |\nabla u|^2|^2)  \right) ds \\
& \qquad -   \int_{(t,\tau)\cap \fA} \left( \int_{\Sigma_s}  u^{-\alpha} |\nabla u|^{-1} j_\delta^{-1} \Ric_M(\nabla u,\nabla u)   \right) ds \\
& \qquad +   \int_{(t,\tau)\cap \fA} \left( \int_{\Sigma_s}   \alpha(\alpha+1) u^{-\alpha-2} j_\delta |\nabla u| \right) ds \\
& \qquad + \int_{\Omega_{t,\tau} \cap u^{-1}(\fB)} \alpha(\alpha+1) u^{-\alpha-2} j_\delta |\nabla u|^2 -  u^{-\alpha} j_\delta^{-1} \Ric_M(\nabla u,\nabla u)  
\end{align*}
Because $\alpha(\alpha+1) u^{-\alpha-2} j_\delta |\nabla u|^2 -  u^{-\alpha} j_\delta^{-1} \Ric_M(\nabla u,\nabla u)$ is uniformly bounded in $L^\infty(\Omega_{t,\tau})$ as $\delta\to 0$, we can send $\delta\to0$ and then $|\cB| \to 0$ to find 
\begin{align*}
& (t^{-\alpha} F'(t) + \alpha t^{-\alpha-1}F(t)) - (\tau^{-\alpha} F'(\tau) + \alpha \tau^{-\alpha-1}F(\tau)) \\
& \leq - \int_t^\tau \left( \int_{\Sigma_s} u^{-\alpha} |\nabla u|^{-2}   (|D^2 u|^2 -  |\nabla |\nabla u||^2)  \right) ds \\
& \qquad -   \int_t^\tau \left( \int_{\Sigma_s}  u^{-\alpha} |\nabla u|^{-2}  \Ric_M(\nabla u,\nabla u)   \right) ds \\
& \qquad +   \int_t^\tau \left( \int_{\Sigma_s}   \alpha(\alpha+1) u^{-\alpha-2}   |\nabla u|^2 \right) ds . 
\end{align*}
Using Stern's rearrangement \cite{Stern} of the Bochner terms as in the proof of Proposition \ref{prop:stern.bochner.stable} we can write (along $\Sigma_s$, $s \in \cR$)
\[
|\nabla u|^{-2} \Ric_M(\nabla u,\nabla u) = -\tfrac 12 |A_M|^2 - K_{\Sigma_s} + |\nabla u|^{-2} |\nabla |\nabla u||^2 - \tfrac 12|\nabla u|^{-2} |D^2 u|^2
\]
so (also using that $u$ is constant along its level sets)
\begin{align*}
& \left(t^{-\alpha} F'(t) + \alpha t^{-\alpha-1}F(t)\right) -\left (\tau^{-\alpha} F'(\tau) + \alpha \tau^{-\alpha-1}F(\tau)\right) \\
& \leq \frac 12  \int_t^\tau s^{-\alpha} \left( \int_{\Sigma_s}   |A_M|^2\right)+ \int_t^\tau s^{-\alpha} \left( \int_{\Sigma_s}    K_{\Sigma_s}   \right) ds \\
& \qquad - \frac 12 \int_t^\tau s^{-\alpha} \left( \int_{\Sigma_s}  |\nabla u|^{-2}   |D^2 u|^2   \right) ds  +   \int_t^\tau \alpha(\alpha+1) s^{-\alpha-2} F(s) ds.
\end{align*}
By Gauss--Bonnet and Proposition \ref{prop:Greens},
\[
\int_t^\tau s^{-\alpha} \left( \int_{\Sigma_s}    K_{\Sigma_s}   \right) ds \leq \frac{1}{\alpha-1} 4\pi (t^{1-\alpha} - \tau^{1-\alpha}).
\]
Furthermore, using the improved Kato inequalty, as well as the Cauchy--Schwarz and H\"older inequalities on \eqref{eq:F.prime}, we find 
\begin{align*}
\int_{\Sigma_s}  |\nabla u|^{-2}   |D^2 u|^2 & \geq  \int_{\Sigma_s} \tfrac 32 |\nabla u|^{-2} |\nabla |\nabla u||^2 \\
& \geq  \int_{\Sigma_s} \tfrac 32 |\nabla u|^{-2} \bangle{\nabla |\nabla u|,\nu}^2\\
&  \geq \tfrac 32 F(s)^{-1} F'(s)^2
\end{align*}
for $s \in \cR$. Putting this together we find 
\begin{align*}
& \left(t^{-\alpha} F'(t) + \alpha t^{-\alpha-1}F(t)\right) - \left(\tau^{-\alpha} F'(\tau) + \alpha \tau^{-\alpha-1}F(\tau)\right) \\
& \leq \frac 12  \int_t^\tau s^{-\alpha} \cA(s) \, ds+ \frac{1}{\alpha-1} 4\pi (t^{1-\alpha} - \tau^{1-\alpha})\\
& \qquad +  \int_t^\tau  ( -\tfrac 34 s^{-\alpha}  F(s)^{-1} F'(s)^2 + \alpha(\alpha+1) s^{-\alpha-2} F(s))ds
\end{align*}
Cauchy--Schwarz yields
\[
2s^{-1} F'(s) F(s) \leq \lambda^{-1} F'(s)^2 + \lambda s^{-2} F(s)^2,
\]
(where $\lambda=\lambda(\alpha)$ as defined in \eqref{eq:lambda}). Rearranging, we find
\[
-\tfrac 34s^{-\alpha} F(s)^{-1} F'(s)^2\leq - \tfrac 3 2  \lambda \tau^{-\alpha-1} F'(s) + \tfrac 34 \lambda^2 s^{-\alpha-2} F(s). 
\]
Using this above we find 
\begin{align*}
& \left(t^{-\alpha} F'(t) + \alpha t^{-\alpha-1}F(t)\right) - \left(\tau^{-\alpha} F'(\tau) + \alpha \tau^{-\alpha-1}F(\tau)\right) \\
& \leq \frac 12  \int_t^\tau s^{-\alpha} \cA(s)\, ds + \frac{1}{\alpha-1} 4\pi (t^{1-\alpha} - \tau^{1-\alpha}) \\
& \qquad +  \int_t^\tau  (- \tfrac 3 2  \lambda s^{-\alpha-1} F'(s) + (\tfrac 34 \lambda^2 + \alpha(\alpha+1) ) s^{-\alpha-2} F(s)) \, ds \\
& = \frac 12  \int_t^\tau s^{-\alpha} \cA(s)\, ds + \frac{1}{\alpha-1} 4\pi (t^{1-\alpha} - \tau^{1-\alpha}) \\
& \qquad +  \int_t^\tau    (\tfrac 34 \lambda^2 - \tfrac 3 2\lambda  (\alpha+1)  + \alpha(\alpha+1) ) s^{-\alpha-2} F(s) \, ds \\
& \qquad - \tfrac 32 \lambda \tau^{-\alpha-1}F(\tau) + \tfrac 32 \lambda t^{-\alpha-1} F(t),
\end{align*}
where we integrated by parts in the last step. Observe that 
\[
\tfrac 34 \lambda^2 - \tfrac 3 2\lambda  (\alpha+1)  + \alpha(\alpha+1) = 0
\]
by \eqref{eq:lambda}, so we can rearrange this to read
\begin{align*}
& \left(t^{-\alpha} F'(t) + (\alpha-\tfrac 32 \lambda) t^{-\alpha-1}F(t)\right) - \left(\tau^{-\alpha} F'(\tau) + (\alpha-\tfrac 32 \lambda)  \tau^{-\alpha-1}F(\tau)\right) \\
& \leq \frac 12  \int_t^\tau s^{-\alpha} \cA(s)\, ds + \frac{1}{\alpha-1} 4\pi (t^{1-\alpha} - \tau^{1-\alpha})\\
& \leq \frac 12  \int_t^\tau s^{-\alpha} \cA(s)\, ds + \frac{1}{\alpha-1} 4\pi t^{1-\alpha}  .
\end{align*}
We thus find 
\begin{align*}
\left(t^{\alpha-\tfrac 32 \lambda} F(t)\right)'
& \leq C t^{2\alpha-\tfrac 32 \lambda} + \frac{1}{\alpha-1} 4\pi  t^{\alpha-\tfrac 32 \lambda + 1}  +  \frac 12 t^{2\alpha-\tfrac 32 \lambda} \int_t^\tau s^{-\alpha} \cA(s)\, ds \end{align*}
where $C=C(\tau)$ is bounded uniformly for $\alpha \in (\alpha_0,2)$. By Lemma \ref{lemm:F.abs.cont} we can integrate this on $(\ell,t)$ for $\ell,t \in \cR$, $\ell<t <\tau$, yielding 
\begin{align*}
t^{\alpha-\tfrac 32 \lambda} F(t)  
& \leq \ell^{\alpha-\tfrac 32 \lambda} F(\ell) + \frac{C}{2\alpha- \tfrac 32\lambda+1} t^{2\alpha-\tfrac 32 \lambda+1} \\
& \qquad  + \frac{1}{(\alpha-1)(\alpha-\tfrac 32 \lambda + 2)} 4\pi t^{\alpha-\tfrac 32 \lambda + 2}  \\
& \qquad +  \frac 12 \int_\ell^t  \int_\sigma^\tau \sigma^{2\alpha-\tfrac 32 \lambda} s^{-\alpha} \cA(s)\, ds d\sigma 
\end{align*}
(where we have dropped several negative terms evaluated at $\ell$, cf.\ Lemma \ref{lemm:alpha}). We apply Fubini's theorem to write 
\begin{align*}
& \int_\ell^t  \int_\sigma^\tau \sigma^{2\alpha-\tfrac 32 \lambda} s^{-\alpha} \cA(s)\, ds d\sigma \\
& = \int_\ell^t \int_\ell^s  \sigma^{2\alpha-\tfrac 32 \lambda} s^{-\alpha} \cA(s)\, d\sigma ds + \int_t^\tau \int_\ell^t  \sigma^{2\alpha-\tfrac 32 \lambda} s^{-\alpha} \cA(s)\, d\sigma ds \\
& \leq \frac{1}{2\alpha-\frac 32\lambda+1} \int_\ell^t  s^{\alpha-\tfrac 32 \lambda+1} \cA(s)\, ds +\frac{1}{2\alpha-\frac 32\lambda+1} t^{2\alpha-\tfrac 32 \lambda + 1}  \int_t^\tau   s^{-\alpha} \cA(s)\, ds. 
\end{align*}
By Lemma \ref{lemm:alpha}, $s^{\alpha - \tfrac 32 \lambda + 1} \leq 1$  for $s \in (0,1)$. Because $t < 1$ we can thus estimate
\begin{align*}
& \int_\ell^t  \int_\sigma^\tau \sigma^{2\alpha-\tfrac 32 \lambda} s^{-\alpha} \cA(s)\, ds d\sigma \\
& \leq \frac{1}{2\alpha-\frac 32\lambda+1} \int_\ell^t    \cA(s)\, ds +\frac{1}{2\alpha-\frac 32\lambda+1} t^{2\alpha-\tfrac 32 \lambda + 1}  \int_t^\tau   s^{-\alpha} \cA(s)\, ds. 
\end{align*}
On the other hand, by Lemmas \ref{lemm:F.apriori.bds} and \ref{lemm:alpha} we have that 
\[
\ell^{\alpha-\tfrac 32 \lambda} F(\ell) = O(\ell^{\alpha-\tfrac 32 \lambda+1}) = o(1)
\]
as $\ell\to0$. Thus, we can pass to the limit as $\ell \searrow 0$ to obtain
\begin{align*}
t^{\alpha-\tfrac 32 \lambda} F(t)  
& \leq  \frac{C}{2\alpha- \tfrac 32\lambda+1} t^{2\alpha-\tfrac 32 \lambda+1}   + \frac{1}{(\alpha-1)(\alpha-\tfrac 32 \lambda + 2)} 4\pi t^{\alpha-\tfrac 32 \lambda + 2}  \\
& \qquad +  \frac{1}{2(2\alpha-\frac 32\lambda+1)} \liminf_{\ell\searrow 0} \int_\ell^t    \cA(s)\, ds \\
& \qquad +\frac{1}{2(2\alpha-\frac 32\lambda+1)} t^{2\alpha-\tfrac 32 \lambda + 1}   \int_t^\tau   s^{-\alpha} \cA(s)\, ds.
\end{align*}
Because $\alpha \mapsto \lambda(\alpha)$ is continuous at $\alpha=2$ and $\lambda(2) = 2$ we can then send $\alpha \nearrow 2$ to find 
\begin{align*}
t^{-1} F(t)  
& \leq  C t^{2}   +  4\pi t  + \frac 14  \liminf_{\ell\searrow 0} \int_\ell^t   \cA(s)\, ds  + \frac 14 t^{2}   \int_t^\tau   s^{-2} \cA(s)\, ds
\end{align*}
Because $\tau \leq 1$, this yields the assertion. 
\end{proof}
\begin{rema}\label{rema:MW.ric}
In the first version of this article, we suggested that the method of proof of Proposition \ref{prop:MW-monotonicity} should be capable of weakening the hypothesis $\liminf_{x\to\infty}\Ric \geq 0$ in \cite[Corollary 3.2]{MunteanuWang} to $\liminf_{x\to\infty}\Ric > - \infty$. This has recently been carried out in a more general context in \cite{CCLT}.
\end{rema}

Finally, we are able to obtain the following sharp decay estimate. 
\begin{coro}\label{coro:F.est.sharp}
We have $F(t) = O(t^2)$ as $t\to 0$.
\end{coro}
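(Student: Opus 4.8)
The plan is to fuse the two competing estimates already established—Proposition~\ref{prop:MW-monotonicity} and Corollary~\ref{coro:cutoff}—into a single self-referential integral inequality for $F$ alone, and then to run an absorption (Gronwall-type) argument that succeeds precisely because the resulting constant is $\tfrac13<1$. For the first step I would, in Proposition~\ref{prop:MW-monotonicity}, bound $\liminf_{\ell\searrow 0}\int_\ell^t\cA(s)\,ds$ by $\limsup_{\ell\searrow 0}\int_\ell^t\cA(s)\,ds$, so the right-hand side becomes at most $Ct^3 + 4\pi t^2 + \tfrac14 t\big(\limsup_{\ell\searrow 0}\int_\ell^t\cA(s)\,ds + t^2\int_t^1 s^{-2}\cA(s)\,ds\big)$; Corollary~\ref{coro:cutoff} then bounds the parenthesized quantity by $O(t) + \tfrac43 t^2\int_t^1 s^{-4}F(s)\,ds$. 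Since $\tfrac14\cdot\tfrac43=\tfrac13$ and $t<1$, collecting the leftover powers of $t$ into the error term yields, for a suitable constant $C_0$ and all $t\in(0,t_0)$,
\[
F(t)\ \le\ C_0\,t^2\ +\ \tfrac13\,t^3\int_t^1 s^{-4}F(s)\,ds .
\]

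Next I would absorb the integral. Set $G(t)=\int_t^1 s^{-4}F(s)\,ds$, which is finite for every $t>0$ (as $F$ is continuous, hence locally bounded, by Lemma~\ref{lemm:int.level.set.cont}) and $C^1$ with $G'(t)=-t^{-4}F(t)$. The inequality above reads $-t^4G'(t)\le C_0 t^2+\tfrac13 t^3 G(t)$, i.e.\ $G'(t)+\tfrac13 t^{-1}G(t)\ge -C_0 t^{-2}$ on $(0,t_0)$. Multiplying by the integrating factor $t^{1/3}$ gives $\big(t^{1/3}G(t)\big)'\ge -C_0\,t^{-5/3}$, and integrating from $t$ to $t_0$ (using $G(t_0)<\infty$ and $\int_t^{t_0}s^{-5/3}\,ds\le\tfrac32 t^{-2/3}$) produces $G(t)\le C_1 t^{-1/3}+\tfrac{3C_0}{2}\,t^{-1}$ on $(0,t_0)$, with $C_1=t_0^{1/3}G(t_0)$. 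Substituting this back,
\[
F(t)\ \le\ C_0\,t^2+\tfrac13\,t^3\Big(C_1 t^{-1/3}+\tfrac{3C_0}{2}\,t^{-1}\Big)\ =\ \tfrac{3C_0}{2}\,t^2+\tfrac{C_1}{3}\,t^{8/3},
\]
and since $t^{8/3}=o(t^2)$ this gives $F(t)=O(t^2)$ as $t\to 0$, which is the assertion.

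The arithmetic of the combination and the elementary ODE manipulation are routine; the one point that genuinely matters—indeed the crux of the whole paper—is that the constant produced in the combined inequality is \emph{exactly} $\tfrac13$, in particular strictly below $1$. This is what makes the integrating factor $t^{1/3}$ carry a positive exponent, so that $\int_t^{t_0}s^{-5/3}\,ds$ only contributes at order $t^{-2/3}$ and the absorption closes rather than diverging; a constant $\ge 1$ would break the last step entirely. I would therefore double-check the bookkeeping of constants in Propositions~\ref{prop:stern.bochner.stable} and~\ref{prop:MW-monotonicity}—in particular the factors $\tfrac43$ and $\tfrac14$ and the places where the improved Kato inequality and Gauss--Bonnet on the connected level sets $\Sigma_s$ are invoked—since any additional loss there would inflate this constant past $1$.
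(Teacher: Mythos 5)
Your derivation of the key inequality $F(t)\le C_0 t^2+\tfrac13 t^3\int_t^1 s^{-4}F(s)\,ds$ is exactly the paper's first step (the combination of Proposition \ref{prop:MW-monotonicity} with Corollary \ref{coro:cutoff}, including the correct handling of $\liminf\le\limsup$ in $\ell$), but your absorption step is genuinely different and also correct. The paper argues by contradiction: assuming $\limsup_{t\to0}t^{-2}F(t)=\infty$, it picks points $t_j$ maximizing $s^{-2}F(s)$ on $[t_j,1]$, bounds $s^{-2}F(s)\le t_j^{-2}F(t_j)$ inside the integral, and absorbs $\tfrac13(1-t_j)F(t_j)$ into the left-hand side to force $F(t_j)=O(t_j^2)$, a contradiction. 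You instead treat the inequality as a differential inequality for $G(t)=\int_t^1 s^{-4}F(s)\,ds$ (which is finite and $C^1$ with $G'(t)=-t^{-4}F(t)$, using the continuity of $F$ from Lemma \ref{lemm:int.level.set.cont}), apply the integrating factor $t^{1/3}$, integrate on $(t,t_0)$, and substitute back; this is a standard Gronwall-type closure and the computations check out, yielding the explicit bound $F(t)\le\tfrac{3C_0}{2}t^2+\tfrac{C_1}{3}t^{8/3}$. Your version is quantitative (it produces an explicit constant rather than a proof by contradiction) at the cost of invoking the fundamental theorem of calculus for $G$, while the paper's maximum-point argument is softer and uses only continuity of $F$; both hinge identically on the coefficient $\tfrac13$ being strictly less than $1$ (in your formulation, a coefficient $c\ge1$ would make the integrating factor exponent nonpositive and the resulting bound no better than $O(t^{3-c})$), and your closing remark correctly identifies where that constant comes from.
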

\begin{proof}
Combining Proposition \ref{prop:MW-monotonicity} with Corollary \ref{coro:cutoff} we obtain
\begin{equation}\label{eq:bootstrap.bd.F}
F(t)  \leq  O(t^2)  +  \frac 13 t^3 \int_{t}^{1}  s^{-4} F(s) \, ds
\end{equation}
as $t\to 0$. Assume for contradiction that $\limsup_{t\searrow 0} F(t) t^{-2} = \infty$. If this held, then we could choose $\{t_j\}_{j\in\NN} \subset (0,1)$ so that
\[
F(t_j) t_j^{-2} = \max_{[t_j,1]} F(s) s^{-2} \to \infty. 
\]
Using \eqref{eq:bootstrap.bd.F} at $t=t_j$ we find 
\begin{align*}
F(t_j) & \leq  O(t_j^2)  +  \frac 13 t_j^3 \int_{t_j}^{1}  s^{-2} (s^{-2} F(s)) \, ds\\
& \leq  O(t_j^2)  +  \frac 13 t_j  F(t_j) \int_{t_j}^{1}  s^{-2} \, ds\\
& =  O(t_j^2)  +  \frac 13 F(t_j)  (1-t_j). 
\end{align*}
Rearranging this we find
\[
(1-\tfrac 13 (1-o(1))) F(t_j) \leq O(t_j^2). 
\]
This is a contradiction, completing the proof. 
\end{proof} 

\section{Proof of Theorems \ref{theo:bernstein}, \ref{theo:curvature}, and \ref{theo:curvature.Riem}}\label{sec:proofs}

Theorem \ref{theo:bernstein} follows from Theorem \ref{theo:curvature} and the fact that there are no compact minimal surfaces in $\RR^{n+1}$. To prove Theorems \ref{theo:curvature} and \ref{theo:curvature.Riem}, we briefly recall a standard point-picking argument (cf.\ \cite[Lecture 3]{White:notes}) as follows: if Theorem \ref{theo:curvature} (or Theorem \ref{theo:curvature.Riem}) was not true, then we could find a sequence of two-sided, stable minimal immersed hypersurfaces $M_i$ in $\RR^4$ (or in $(N^4,g)$) and $p_i\in M_i$ such that
\[|A_{M_{i}}(p_i)| d_{M_{i}}(p_i,\partial M_i)=R_i\to \infty.\]
Here the distance is intrinsic on $M_i$. By considering an appropriate subset of $M_i$ we can assume that $M_i$ is compact and smooth up to its boundary. This allows us to assume that $p_i$ maximizes $|A_i(x)| d_M(x,\partial M_i)$. By translating and rescaling (we still denote the surfaces by $M_i$), we can ensure that $p_i=0$ and $|A_i(0)|=1$. Then, for any $r<R_i$ and $x \in M_{i}$ with $d_{M_{i}}(0,x)\le r$ we find
\[|A_{M_{i}}(x)|\le \frac{R_i}{d_{M_{i}}(x,\partial M_i)}\le \frac{R_i}{R_i-r},\]
and thus for each $r>0$, 
\[\sup_{d_{M_i}(x,0)\le r} |A_{M_{i}}(x)|\le \frac{R_i}{R_i-r}\to 1.\]
Therefore $M_i$ subsequentially converges smoothly to a complete, two-sided, stable minimal immersion $M^3 \to \RR^4$ with $|A_M(0)|=1$ and $|A_M(x)|\le 1$ for all $x\in M$ (this last condition was not assumed \emph{a priori} in the statement of Theorem \ref{theo:bernstein}).

To show that such an immersion does not exist, we first pass to the universal cover to arrange that assume that $M$ is simply connected (two-sided stability passes to any covering space by \cite[Theorem 1]{fischer-colbrie-schoen}). We can construct the Green's function $u \in C^{\infty}_{\textrm{loc}}(M\setminus\{p\})$ as in Proposition \ref{prop:Greens} and conclude that
\[
F(t) = \int_{\Sigma_{t}} |\nabla u|^{2}
\]
satisfies $F(t) \leq Ct^{2}$ for all $t \in (0,\infty)$ by Lemma \ref{lemm:F.apriori.bds} and Corollary \ref{coro:F.est.sharp}. Consider $f=\varphi \circ u$ for $\varphi \in C^{0,1}_{c}((0,\infty))$ in Proposition \ref{prop:SSY.L3} and apply the co-area formula to write
\begin{align*}
\int_{M} |A_{M}|^{3} \varphi(u)^{3} & \leq C  \int_{M} |\nabla (\varphi \circ u)|^{3}\\
& = C \int_{M} \varphi'(u)^{3} |\nabla u|^{3}\\
& = C \int_{0}^{\infty} \varphi'(s)^{3} \left( \int_{\Sigma_{s}} |\nabla u|^{2} \right) ds\\
& = C \int_{0}^{\infty} \varphi'(s)^{3} F(s) \, ds \\
& \leq C \int_{0}^{\infty} \varphi'(s)^{3} s^{2} \, ds.
\end{align*}
For $\rho \gg0$ choose
\[
\varphi(t) = \begin{cases}
0 & t \in [0,\rho^{-2})\\
2 + \tfrac{\log t}{\log \rho }  & t\in [\rho^{-2},\rho^{-1})\\
1 & t \in [\rho^{-1},\rho)\\
2-\tfrac{\log t}{\log \rho} & t\in [\rho,\rho^2)\\
0 & t \in [\rho^2,\infty). 
\end{cases}
\]
We find
\begin{align*}
& \int_{\{\rho^{-1}\leq u \leq \rho\}} |A_{M}|^3 \\
& \leq C \int_{\rho^{-2}}^{\rho^{-1}} \frac{s^2}{s^3 |\log \rho|^3} \, ds + C \int_{\rho}^{\rho^{2}} \frac{s^2}{s^3 |\log \rho|^3} \, ds = O(|\log \rho|^{-2}) .
\end{align*}
Letting $\rho\to\infty$, we find that $A_{M}\equiv 0$. This completes the proof.

\section{Proof of Theorem \ref{theo:index}} \label{sec:index}

By \cite[\S 3]{Tysk}, a minimal immersion $M^3\to\RR^4$ with finite total curvature $\int_M |A_M|^3 < \infty$ has finite index. Thus, it suffices to prove that finite index implies finite total curvature. Consider a complete, two-sided, minimal immersion $M^3\to\RR^4$ with finite index. By Proposition \ref{prop:finitely.ends}, $M$ has $k < \infty$ ends. Note that $M$ is oriented and has bounded curvature.

\begin{lemm}\label{lemm:components.ends.exhaustion}
Consider an exhaustion $\Omega_{1}\subset \Omega_{2}\subset \dots \subset M$ by pre-compact regions with smooth boundary so that each component of $M\setminus \Omega_{i}$ is unbounded. Then for $i$ sufficiently large, $\partial\Omega_{i}$ has $k$ components. 
\end{lemm}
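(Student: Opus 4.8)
The plan is to show that the number of boundary components of $\partial\Omega_i$ stabilizes to exactly $k = $ (number of ends of $M$) once $i$ is large. First I would recall that since $M$ has finitely many ends, there is a compact set $K_0 \subset M$ such that $M \setminus K_0$ has exactly $k$ unbounded connected components $E_1, \dots, E_k$ (the ends), and each $E_j$ is connected and unbounded. By enlarging, I may assume $K_0 \subset \Omega_{i_0}$ for some $i_0$; for $i \ge i_0$ we then have $\Omega_{i_0} \subset \Omega_i$, so $M \setminus \Omega_i \subset M \setminus K_0$, hence each connected component of $M \setminus \Omega_i$ lies inside exactly one $E_j$.

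Next I would argue that for $i$ large each $E_j$ contains exactly one component of $M \setminus \Omega_i$. Since $\Omega_i$ is precompact, $E_j \setminus \Omega_i$ is nonempty (in fact unbounded) for every $j$, so it has at least one unbounded component; by hypothesis \emph{every} component of $M \setminus \Omega_i$ is unbounded, so $E_j \setminus \Omega_i$ has at least one component and all its components are unbounded. To get \emph{at most} one, I would use that $E_j$ is connected and $\Omega_i$ exhausts: if $E_j \setminus \Omega_i$ had two distinct unbounded components for arbitrarily large $i$, one could (using connectedness of $E_j$ and precompactness of each $\Omega_i$) produce, for every $i$, a path in $E_j \cap \Omega_i^c$... more carefully: a standard fact is that for a connected, locally connected, $\sigma$-compact space $E_j$ with a compact exhaustion, the number of unbounded components of the complement of a compact set is eventually constant and equals the number of ends of $E_j$, which is $1$ since $E_j$ is a single end of $M$. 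So for $i$ large, $E_j \setminus \Omega_i$ is connected, giving exactly $k$ components of $M \setminus \Omega_i$ in total.

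Finally I would pass from components of $M\setminus\Omega_i$ to components of $\partial\Omega_i$. Since $\Omega_i$ is a precompact region with smooth boundary, $\partial\Omega_i$ is a closed hypersurface and each of its connected components bounds; the components of $M \setminus \Omega_i$ are in bijection with the components of $\partial\Omega_i$ that they are adjacent to, provided no component of $M\setminus \Omega_i$ is bounded (which is exactly the standing hypothesis) and provided $M$ itself is connected, so that no component of $\partial\Omega_i$ can fail to touch the unbounded part. Concretely: the closure of each component $U$ of $M\setminus\Omega_i$ meets $\partial\Omega_i$ in a nonempty union of components of $\partial\Omega_i$; conversely each component of $\partial\Omega_i$ is in the closure of at least one component of $M\setminus\Omega_i$ (as $\Omega_i$ is precompact). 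By connectedness of $M$ and an Euler-characteristic / Mayer--Vietoris-free connectedness argument one checks this correspondence is a bijection once $i$ is large, so $\partial\Omega_i$ has exactly $k$ components.

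The main obstacle I anticipate is the bookkeeping in the middle step — rigorously ruling out that an end $E_j$ could split $\Omega_i^c$ into several unbounded pieces for all $i$ — which really is the statement that an "end" (in the sense of a chosen unbounded complementary component of some fixed compact set) has exactly one end in the Freudenthal sense; I would handle this by invoking the standard theory of ends of manifolds (the inverse system $\pi_0(M \setminus \Omega_i)$ is eventually constant) rather than reproving it, and the rest is routine point-set topology using precompactness of the $\Omega_i$ and connectedness of $M$.
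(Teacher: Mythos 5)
Your first two steps (eventually $M\setminus\Omega_i$ has exactly $k$ unbounded components, one in each end) are fine and match the paper's starting point, but the final step --- that the components of $M\setminus\Omega_i$ are in bijection with the components of $\partial\Omega_i$ by ``routine point-set topology'' using only connectedness of $M$ and unboundedness of the complementary components --- is a genuine gap, and in fact it is false at that level of generality. A single unbounded complementary component can be adjacent to several components of $\partial\Omega_i$, and this can persist for all $i$ along an exhaustion if $M$ has infinite topology at infinity. For instance, take $M$ to be a one-ended $3$-manifold containing infinitely many pairwise non-isotopic non-separating $2$-spheres escaping to infinity (e.g.\ an infinite connected sum of $\RR^3$ with copies of $S^2\times S^1$ attached along balls going to infinity): choosing $\Omega_i$ to be a large compact piece from which one removes a thin ``trench'' running out to infinity together with a tubular neighborhood of one of the far-away essential spheres, one gets a nested exhaustion with $M\setminus\Omega_i$ connected and unbounded for every $i$, yet $\partial\Omega_i$ always has at least two components, while $k=1$. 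So the count of ends alone does not control the number of boundary components; some hypothesis on the topology of $M$ must enter, and your proposal never uses finite index (or minimality/stability) beyond the count of ends.

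This is precisely where the paper's argument does its work: if $\partial\Omega_i$ had at least two components inside a fixed end for infinitely many $i$, one extracts surfaces $\Sigma_i\subset\partial\Omega_i$ forming an infinite linearly independent family in $H_2(M)$; Poincar\'e duality then makes $H^1_c(M;\RR)$ infinite-dimensional, and by Carron's result \cite{Carron} together with the Michael--Simon Sobolev inequality (Proposition \ref{prop:MSS}) this forces the space of $L^2$-harmonic $1$-forms to be infinite-dimensional, contradicting Proposition \ref{prop:finitely.ends}, which is where the finite Morse index hypothesis is used. In short, the missing idea in your proposal is the homological/analytic input ruling out infinitely many independent $2$-cycles near infinity; also note that your auxiliary claim that each component of $\partial\Omega_i$ ``bounds'' already presupposes $H_2(M)=0$, which is exactly what cannot be assumed and must instead be controlled via the $L^2$-Betti number argument.
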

\begin{proof}
Discarding finitely many terms, we can assume $M\setminus \Omega_{i}$ has $k$ components for all $i$. If the assertion fails, we can pass to a subsequence so that $\Omega_{i+1}\setminus\Omega_{i}$ has $k$ components and $\partial\Omega_{i}$ has at least two components in a fixed end $E$. This yields a sequence of surfaces $\Sigma_{i}\subset \partial\Omega_{i}$ that form a linearly independent set in $H_{2}(M)$. By Poincar\'e duality, this implies that $H^{1}_{c}(M;\RR)$ is infinite. By \cite[Proposition 2.11]{Carron} and Proposition \ref{prop:MSS}, this further implies that the first $L^{2}$-Betti number is infinite, contradicting Proposition \ref{prop:finitely.ends}. 
\end{proof}

Consider an end $E \subset M$. We can assume that $E$ has smooth boundary. Using the arguments in Proposition \ref{prop:Greens}, we can construct a harmonic function $u \in C^{\infty}(E)$ with finite Dirichlet energy so that $u=1$ on $\partial E$ and $u\to 0$ at infinity. By Lemma \ref{lemm:components.ends.exhaustion}, we can choose $\tau_{0}\in (0,1)$ a regular value of $u$, so that for any other regular value $t\in(0,\tau_{0})$, $\Sigma_{t} : = u^{-1}(t)$ is connected and $\{u>\tau_{0}\}$ is stable. 

The proof of Corollary \ref{coro:F.est.sharp} carries over to this situation to show that
\[
F(t) = \int_{\Sigma_{t}} |\nabla u|^{2}
\]
satisfies $F(t) = O(t^{2})$. We proceed essentially as in the proof of Theorems \ref{theo:bernstein} and \ref{theo:curvature}, and plug the test function $\varphi(u)$ into Proposition \ref{prop:SSY.L3}, where
\[
\varphi(t) = \begin{cases}
0 & t \in [0,\rho^{-2})\\
2 + \tfrac{\log t}{\log \rho }  & t\in [\rho^{-2},\rho^{-1})\\
1 & t \in [\rho^{-1},\tfrac{\tau_0}{2})\\
2-\tfrac{2t}{\tau_0} & t\in [\tfrac{\tau_0}{2},\tau_0].
\end{cases}
\]
Sending $\rho\to \infty$, we conclude that
\[
\int_{E} |A_{M}|^{3} < \infty. 
\]
Since there are finitely many ends (Proposition \ref{prop:finitely.ends}), this yields
\[
\int_{M} |A_{M}|^{3} < \infty,
\]
as desired.

\appendix

\section{Ends of stable (finite index) minimal hypersurfaces}\label{app:ends}

\begin{prop}[\cite{CaoShenZhu}]\label{prop:CSZ}
For $n>2$, if $M^n\to\RR^{n+1}$ is a complete, connected, two-sided, stable minimal immersion then $M$ has only one end. 
\end{prop}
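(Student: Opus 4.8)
The plan is to argue by contradiction in the spirit of Cao--Shen--Zhu: assuming $M$ has at least two ends, I would combine the structure theory of harmonic functions on manifolds with several ends with the Schoen--Yau Bochner argument against stability, producing a non-constant bounded harmonic function of finite Dirichlet energy and then ruling it out.

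\emph{Step 1: producing a good harmonic function.} Since $M$ is minimal, the Michael--Simon--Sobolev inequality (Proposition \ref{prop:MSS}) holds on $M$ --- and, restricted to functions supported in a fixed end, on that end --- with no mean-curvature term; for $n>2$ it self-improves to the $L^{2}$-Sobolev inequality $\big(\int |f|^{\frac{2n}{n-2}}\big)^{\frac{n-2}{n}} \le C\int |\nabla f|^{2}$. From this one reads off that $M$ has infinite volume and that every end of $M$ is non-parabolic (this is standard given such a Sobolev inequality, cf.\ \cite{Carron}). With at least two non-parabolic ends, the Li--Tam construction applies (cf.\ \cite{Li:harmonic.lectures}): choosing an exhaustion $\{\Omega_i\}$ and solving $\Delta f_i = 0$ on $\Omega_i$ with $f_i = 1$ on the boundary components lying in one fixed end and $f_i = 0$ on the remaining ones, one gets $0 \le f_i \le 1$ with $\int_{\Omega_i}|\nabla f_i|^{2}$ equal to the monotone and bounded flux through $\partial\Omega_i$; passing to a limit gives a bounded harmonic function $u$ on $M$ with $\int_M |\nabla u|^{2} < \infty$ which is non-constant because the chosen end is non-parabolic.

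\emph{Step 2: the Bochner argument and conclusion.} I would plug $f = j_\delta\psi$ into \eqref{eq:stable}, where $j_\delta = (|\nabla u|^{2}+\delta)^{1/2}$ and $\psi \in C^{\infty}_{c}(M)$, regularizing exactly as in the proof of Proposition \ref{prop:stern.bochner.stable} --- the critical set of $u$ has Hausdorff codimension at least two by Proposition \ref{prop:crit.harm}, so it is harmless. Using the Bochner formula $\tfrac12\Delta|\nabla u|^{2} = |\nabla^{2}u|^{2} + \Ric_M(\nabla u,\nabla u)$, the refined Kato inequality $|\nabla^{2}u|^{2} \ge \tfrac{n}{n-1}|\nabla|\nabla u||^{2}$ for the harmonic function $u$, and $\Ric_M(\nabla u,\nabla u) \ge -|A_M|^{2}|\nabla u|^{2}$ from the Gauss equation, then integrating by parts and letting $\delta \searrow 0$, the cross terms $\int \psi |\nabla u|\langle\nabla|\nabla u|,\nabla\psi\rangle$ cancel and one is left with
\[
\tfrac{1}{n-1}\int_M |\nabla|\nabla u||^{2}\psi^{2} \le \int_M |\nabla u|^{2}|\nabla\psi|^{2}.
\]
Taking $\psi$ equal to $1$ on the intrinsic ball $B_R$ of radius $R$ about a fixed point of $M$, supported in $B_{2R}$, with $|\nabla\psi|\le 2/R$, the right-hand side is at most $\tfrac{4}{R^{2}}\int_M|\nabla u|^{2} \to 0$ as $R\to\infty$. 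Hence $|\nabla u|$ is constant on $M$, and since $\int_M|\nabla u|^{2}<\infty$ while $M$ has infinite volume this constant is $0$, so $u$ is constant --- contradicting Step 1. Therefore $M$ has exactly one end.

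\emph{Main obstacle.} The crux is Step 1: producing a non-constant bounded harmonic function of finite Dirichlet energy forces one to know the ends of $M$ are non-parabolic, which is precisely where the hypothesis $n>2$ and the Michael--Simon--Sobolev inequality enter. The rest is a routine application of the stability inequality and the Bochner formula, the only subtlety being the standard regularization near the critical set of $u$.
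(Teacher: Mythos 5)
Your argument is correct and is essentially the proof of the cited result: the paper states Proposition \ref{prop:CSZ} as a quoted theorem of Cao--Shen--Zhu without giving a proof, and their original argument is exactly your combination of the Michael--Simon Sobolev inequality (making every end non-parabolic for $n>2$), the Li--Tam construction of a non-constant bounded harmonic function with finite Dirichlet integral when there are at least two such ends, and the Schoen--Yau substitution $f=|\nabla u|\psi$ in the stability inequality together with the Bochner formula, the refined Kato inequality, and a cutoff using the finite Dirichlet energy. The only wording to tighten is in Step 1: non-constancy of the Li--Tam limit uses that \emph{two} ends are non-parabolic (which you do have, since you showed all ends are non-parabolic), not merely non-parabolicity of the single chosen end.
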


\begin{prop}[\cite{LiWang}]\label{prop:finitely.ends}
For $n>2$, if $M^n\to\RR^{n+1}$ is a complete, connected, two-sided, stable minimal immersion with finite index, then $M$ has finitely many ends. Moreover, the space of $L^2$-harmonic $1$-forms is finite. 
\end{prop}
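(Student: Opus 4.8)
The plan is to reduce the statement to the finite dimensionality of the space $\mathcal{H}^1_{L^2}(M)$ of $L^2$ harmonic $1$-forms, and then to establish that finiteness using the index hypothesis. The starting point is the standard fact (see e.g.\ \cite{FC}) that a complete minimal immersion has finite Morse index if and only if there is a compact set $K\subset M$ so that $M\setminus K$ is stable, i.e.\ $Q(f,f)\geq 0$ for all $f\in C^\infty_c(M\setminus K)$; I would fix such a $K$, enlarged so that it has smooth boundary and every component of $M\setminus K$ is unbounded. I will use freely that $M$ is two-sided with bounded curvature, that the Gauss equation for $M^3\to\RR^4$ gives $\Ric_M\geq -|A_M|^2$, and that the Michael--Simon Sobolev inequality (Proposition \ref{prop:MSS}) holds on $M$, hence on every end (taking test functions supported there); since $n=3>2$ this already forces $M$, and each of its ends, to be non-parabolic.

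For the reduction: since every end of $M$ is non-parabolic, the Li--Tam construction (cf.\ \cite{LiWang} and the references therein) would produce, whenever $M$ has $k$ ends, a $k$-dimensional space of bounded harmonic functions of finite Dirichlet energy spanned by $h_1,\dots,h_k$ with $\sum_i h_i$ constant, so that $dh_1,\dots,dh_k$ span a $(k-1)$-dimensional subspace of $\mathcal{H}^1_{L^2}(M)$ (each $dh_i$ is closed, and co-closed since $\delta dh_i=\Delta h_i=0$, and $L^2$ since $h_i$ has finite Dirichlet energy). Thus $k\leq \dim\mathcal{H}^1_{L^2}(M)+1$, and it remains to bound $\dim\mathcal{H}^1_{L^2}(M)$.

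To bound $\dim\mathcal{H}^1_{L^2}(M)$ I would argue as follows. Any $\omega\in\mathcal{H}^1_{L^2}(M)$ is closed and co-closed, so the Bochner formula, the refined Kato inequality $|\nabla\omega|^2\geq\tfrac{n}{n-1}|\nabla|\omega||^2$, and $\Ric_M\geq-|A_M|^2$ give, weakly on $M$,
\[
|\omega|\,\Delta|\omega|\ \geq\ \tfrac{1}{n-1}|\nabla|\omega||^2\ -\ |A_M|^2|\omega|^2 .
\]
Multiplying by $\phi^2$ with $\phi\in C^\infty_c(M\setminus K)$, integrating by parts, and comparing with the stability inequality on $M\setminus K$ applied to $f=|\omega|\phi$, the $|A_M|^2$ terms cancel and one is left with a Caccioppoli estimate
\[
\int_{M\setminus K}\phi^2|\nabla|\omega||^2\ \leq\ (n-1)\int_{M\setminus K}|\omega|^2|\nabla\phi|^2 ,
\]
which, since $|\omega|\in L^2(M)$, gives $|\nabla|\omega||\in L^2(M\setminus K')$ for a slightly larger compact $K'$ after choosing cutoffs equal to $1$ on large balls. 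Since Proposition \ref{prop:MSS} is a Euclidean-type Sobolev inequality, a Moser iteration applied to the subsolution $|\omega|$ of $\Delta+|A_M|^2$ on $M\setminus K'$ then yields a uniform mean value inequality $\sup_{B_r(x)}|\omega|^2\leq Cr^{-n}\int_{B_{2r}(x)}|\omega|^2$ for balls of bounded radius in $M\setminus K'$. Since restriction to $M\setminus K'$ is injective on $\mathcal{H}^1_{L^2}(M)$ by unique continuation for harmonic forms, a dimension-counting argument for spaces of $L^2$ sections obeying a uniform mean value inequality at infinity (as in \cite{LiWang}) would bound the dimension of the image, so $\dim\mathcal{H}^1_{L^2}(M)<\infty$.

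The main obstacle is this last dimension count: converting the mean value inequality on the stable region into a uniform bound on $\dim\mathcal{H}^1_{L^2}(M)$. It requires both the interaction of stability with the Michael--Simon inequality (to run Moser iteration with the potential $|A_M|^2$ present) and a careful covering argument controlling how much $L^2$ mass a harmonic form can concentrate. The remaining ingredients — the Bochner/Kato manipulations, the non-parabolicity of ends, and the Li--Tam theory — are routine.
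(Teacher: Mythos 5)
The paper does not prove this proposition; it quotes it from \cite{LiWang}, and your outline is essentially the skeleton of that proof: finite index gives stability outside a compact set, each end is non-parabolic by the Michael--Simon Sobolev inequality, Li--Tam theory converts $k$ ends into a $(k-1)$-dimensional subspace of $L^2$ harmonic $1$-forms, and the Bochner formula with the refined Kato inequality combined with the stability inequality (the $|A_M|^2$ terms cancelling) yields the Caccioppoli estimate $\int \phi^2 |\nabla|\omega||^2 \le (n-1)\int |\omega|^2|\nabla\phi|^2$ on the stable region. All of that is correct and is exactly how Li--Wang proceed.

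The gap is at the crux, the finite dimensionality of the space of $L^2$ harmonic $1$-forms. First, you ``use freely'' that $M$ has bounded curvature: this is not a hypothesis of the proposition (which is stated for all $n>2$, where no curvature estimate is available), and it is what you implicitly need to run Moser iteration for the subsolution of $\Delta + |A_M|^2$ with uniform constants near infinity. Second, even granting a uniform mean value inequality on balls at infinity, that alone does not bound the dimension: a space of $L^2$ forms can be infinite dimensional while every element satisfies such a local estimate (the forms can live farther and farther out). The actual mechanism in \cite{LiWang} avoids both issues: the Caccioppoli estimate, fed into the Michael--Simon Sobolev inequality with cutoffs adapted to the stable region, shows that every $\omega$ in the space satisfies $\int_M |\omega|^2 \le C \int_{\Omega} |\omega|^2$ for a \emph{fixed} compact set $\Omega$ (no $L^2$ mass escapes to infinity, uniformly over the space), and then finite dimensionality follows from the standard lemma of Li applied on $\Omega$, where the needed mean value inequality holds automatically by local elliptic theory on a compact set -- no control of $|A_M|$ at infinity is required. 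Since you defer precisely this concentration-plus-counting step to \cite{LiWang}, which is the theorem being proved, the proposal as written is incomplete at its essential point; replacing the ``uniform mean value inequality at infinity'' plan by the concentration estimate on a fixed compact set would repair it.
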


\section{Harnack and Sobolev inequalities}\label{app:harnack.sobolev}
\begin{prop}[{\cite{Yau:harmonic}, cf.\ \cite[Theorem I.3.1]{SchoenYau:lectures}}]\label{prop:diff.harnack}
Suppose that $(M^n,g)$ is a complete Riemannian manifold and $u$ is a positive harmonic function on $B_r(x)$. If $\Ric \geq - K^2$ on $B_r(x)$ then
\[
|\nabla u|(x) \leq C (r^{-1} + K) u(x)
\]
for $C=C(n)$. 
\end{prop}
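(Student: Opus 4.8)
The plan is to recall Yau's argument \cite{Yau:harmonic} (see also \cite[Theorem I.3.1]{SchoenYau:lectures}), which proceeds via the Bochner formula applied to $w := \log u$ together with a cutoff and the maximum principle.

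First, since $u>0$ is harmonic, the function $w=\log u$ satisfies $\Delta w = -|\nabla w|^2$. Writing $Q := |\nabla w|^2$ and applying the Bochner formula
\[
\tfrac12 \Delta |\nabla w|^2 = |\nabla^2 w|^2 + \bangle{\nabla w, \nabla \Delta w} + \Ric(\nabla w,\nabla w),
\]
substituting $\Delta w = -Q$, bounding $|\nabla^2 w|^2 \geq \tfrac1n (\Delta w)^2 = \tfrac1n Q^2$ by Cauchy--Schwarz, and using the hypothesis $\Ric \geq -K^2$, one obtains on $B_r(x)$ the differential inequality
\[
\tfrac12 \Delta Q \geq \tfrac1n Q^2 - \bangle{\nabla w, \nabla Q} - K^2 Q .
\]

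Next I would introduce a cutoff $\phi(y) = \eta(d(x,y)/r)$, where $\eta:[0,\infty)\to[0,1]$ is smooth and nonincreasing with $\eta \equiv 1$ on $[0,\tfrac12]$, $\eta\equiv 0$ on $[1,\infty)$, $(\eta')^2 \leq C\eta$, and $\eta'' \geq -C$. The Laplacian comparison theorem (this is where the Ricci lower bound enters a second time) gives $\Delta d(x,\cdot) \leq (n-1)(d^{-1} + K)$ in the barrier sense away from $x$, so that $|\nabla\phi|^2 \leq C r^{-2}\phi$ and $\Delta \phi \geq -C(r^{-2} + Kr^{-1})$ in the barrier sense. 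One then considers $G := \phi^2 Q$, which is continuous on $B_r(x)$ and compactly supported there, hence attains its maximum at an interior point $x_0$; we may assume $Q(x_0)>0$. At $x_0$ one has $\nabla G = 0$ and $\Delta G \leq 0$ (in the barrier sense, using Calabi's trick to handle the cut locus of $x$). Expanding $\Delta(\phi^2 Q)$, eliminating $\nabla Q$ at $x_0$ via $\nabla G(x_0) = 0$, inserting the differential inequality for $\Delta Q$, and absorbing the resulting cross terms with Young's inequality, one deduces $\phi^2 Q(x_0) \leq C(n)(r^{-2}+K^2)$. Since $\phi \equiv 1$ on $B_{r/2}(x)$ and $x_0$ maximizes $G$, this yields $Q(x) \leq G(x_0) \leq C(n)(r^{-2}+K^2)$, i.e. $|\nabla w|(x) \leq C(n)(r^{-1}+K)$, which is the asserted bound after unwinding $w = \log u$.

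I expect the only genuine technical point to be the non-smoothness of the distance function at the cut locus of $x$ in the cutoff step; this is handled in the standard way by Calabi's trick (replacing $d(x,\cdot)$ near $x_0$ by a smooth upper barrier) or, equivalently, by reading the maximum principle in the support/barrier sense. The algebraic manipulation in the maximum-principle step, while somewhat lengthy, is routine once the cross terms are organized with Young's inequality.
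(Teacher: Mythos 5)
Your argument is correct and is exactly the standard Yau gradient estimate proof (Bochner formula for $\log u$, cutoff via Laplacian comparison, maximum principle with Calabi's trick), which is the argument in the cited sources; the paper itself states Proposition \ref{prop:diff.harnack} without proof, quoting \cite{Yau:harmonic} and \cite[Theorem I.3.1]{SchoenYau:lectures}. Nothing further is needed.
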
 

\begin{prop}[\cite{MichaelSimon}]\label{prop:MSS}
For $n>2$, suppose that $M^n\to \RR^{n+1}$ is a complete minimal immersion. Then for any $w \in C^{0,1}_c(M)$, it holds that 
\[
\left(\int_M w^{\frac{2n}{n-2}} \right)^{\frac{n-2}{n}} \leq C \int_M |\nabla w|^2
\]
for $C=C(n)$. 
\end{prop}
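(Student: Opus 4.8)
The plan is to deduce the stated $L^{2}$-Sobolev inequality from the endpoint ($L^{1}$) Sobolev inequality, reduce the latter to a relative isoperimetric inequality on $M$ by the Federer--Fleming argument, and finally prove that isoperimetric inequality using the monotonicity formula, which is the one place where minimality of $M$ is genuinely used.

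\emph{Step 1: reduction to the $L^{1}$ case.} It suffices to show that for every nonnegative $h\in C^{0,1}_{c}(M)$ one has $\big(\int_{M}h^{n/(n-1)}\big)^{(n-1)/n}\le c(n)\int_{M}|\nabla h|$. Granting this, apply it with $h=|w|^{2(n-1)/(n-2)}$ (assuming $w\not\equiv 0$, else the inequality is trivial). Since the exponent is $\ge 2>1$ this $h$ is Lipschitz with compact support, $h^{n/(n-1)}=|w|^{2n/(n-2)}$, and $|\nabla h|=\tfrac{2(n-1)}{n-2}|w|^{n/(n-2)}|\nabla w|$ a.e. By Cauchy--Schwarz $\int_{M}|\nabla h|\le\tfrac{2(n-1)}{n-2}\big(\int_{M}|w|^{2n/(n-2)}\big)^{1/2}\big(\int_{M}|\nabla w|^{2}\big)^{1/2}$, where the first factor is finite because $w$ is bounded with compact support. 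Since $\tfrac{n-1}{n}-\tfrac12=\tfrac{n-2}{2n}>0$ for $n>2$, one divides through by the appropriate power of $\int_{M}|w|^{2n/(n-2)}$ and squares to obtain the claimed inequality with $C=C(n)$.

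\emph{Step 2: reduction to a relative isoperimetric inequality.} I claim it is enough to prove that for every bounded open $U\subset M$ with smooth boundary, $\mathcal{H}^{n}(U)^{(n-1)/n}\le c(n)\,\mathcal{H}^{n-1}(\partial U)$. Indeed, for nonnegative $h\in C^{0,1}_{c}(M)$ write $h=\int_{0}^{\infty}\mathbf{1}_{\{h>t\}}\,dt$; Minkowski's integral inequality in $L^{n/(n-1)}(M)$ gives $\|h\|_{n/(n-1)}\le\int_{0}^{\infty}\mathcal{H}^{n}(\{h>t\})^{(n-1)/n}\,dt$. By Sard's theorem, for a.e.\ $t$ the set $\{h>t\}$ is open with smooth boundary $\{h=t\}$, so the isoperimetric inequality applies; finally the coarea formula gives $\int_{0}^{\infty}\mathcal{H}^{n-1}(\{h=t\})\,dt=\int_{M}|\nabla h|$, completing this step.

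\emph{Step 3: the isoperimetric inequality (the main obstacle).} Here minimality enters. Because $M$ is minimal (and complete, so $\partial M=\emptyset$), it is stationary: $\int_{M}\operatorname{div}_{M}X=0$ for $X\in C^{1}_{c}(M;\RR^{n+1})$. Plugging in radial vector fields $X(y)=\gamma(|y-x_{0}|)(y-x_{0})$ and manipulating yields the monotonicity formula: $\rho\mapsto\rho^{-n}\mathcal{H}^{n}(M\cap B_{\rho}(x_{0}))$ is nondecreasing, hence the density satisfies $\Theta(M,x_{0})\ge 1$ for every $x_{0}\in M$. For a bounded open $U$ with smooth boundary one runs the same first-variation computation on $U$, now picking up a boundary term bounded by $\mathcal{H}^{n-1}(\partial U\cap B_{\rho}(x_{0}))$; this produces a differential inequality for $\rho^{-n}\mathcal{H}^{n}(U\cap B_{\rho}(x_{0}))$ with a defect controlled by the perimeter. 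Combining the interior density lower bound with a Vitali (or Besicovitch) covering argument applied to this family of ball estimates then gives $\mathcal{H}^{n}(U)^{(n-1)/n}\le c(n)\,\mathcal{H}^{n-1}(\partial U)$ with a purely dimensional constant. The delicate part is making the monotonicity-with-boundary-term estimate quantitative and organizing the covering so overlaps are bounded and the perimeter is not over-counted; I would follow Michael--Simon's original argument (or the equivalent varifold formulation via Allard's estimates) for this, and it is the technical heart of the proof.
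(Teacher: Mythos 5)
The paper does not actually prove this proposition: it is recalled in an appendix purely as a citation to Michael--Simon, so there is no internal argument to compare against. Your Steps 1 and 2 are the standard and correct reductions (the substitution $h=|w|^{2(n-1)/(n-2)}$ with Cauchy--Schwarz, and the Federer--Fleming layer-cake/coarea argument, modulo the routine point that for Lipschitz $h$ one should either smooth first or work with finite-perimeter superlevel sets rather than invoking Sard directly), and you correctly identify that minimality enters only through stationarity/first variation.

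The issue is Step 3, which is the entire content of the theorem, and as sketched it does not close. Integrating the monotonicity-with-boundary-defect inequality from $0$ to a radius $R$ with $R^{-n}\mathcal{H}^n(U\cap B_R)\le \tfrac12\omega_n$ only yields $\int_0^R \rho^{-n}\mathcal{H}^{n-1}(\partial U\cap B_\rho(x))\,d\rho\ge \tfrac12\omega_n$; because of the logarithmic weight this does \emph{not} produce a scale $\rho_x$ with $\mathcal{H}^{n-1}(\partial U\cap B_{\rho_x})\gtrsim \rho_x^{\,n-1}$ (the integral can be driven by concentration at tiny scales), and the subsequent Vitali step would in addition require upper bounds on $\mathcal{H}^n(U\cap B_{5\rho_i})$ in terms of $\rho_i^n$, which minimal immersions do not satisfy (monotonicity gives only lower density bounds). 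Michael--Simon circumvent both problems with a more refined differential inequality (the defect appears as a derivative term that can be integrated by parts) and a stopping-time selection at the level of the measure itself, which gives two-sided control at the chosen radius; this is exactly the ``technical heart'' you defer to their paper. Since the statement being proved \emph{is} the Michael--Simon inequality, deferring that step to their argument means your proposal is ultimately the same as the paper's treatment (a citation), not an independent proof; if you want a genuinely self-contained alternative for the isoperimetric inequality in codimension one, Brendle's ABP-type argument is the natural route, but it is a different method from what you outline.
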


\section{Level sets of harmonic functions} \label{app:harmonic}

\begin{prop}[{\cite{HardtSimon,Lin:nodal,CheegerNaberValtorta}}]\label{prop:crit.harm}
Fix a compact subset $K$ of a Riemannian $n$-manifold $(M^n,g)$, if $\Delta u = 0$ is a harmonic function on $(M,g)$ then 
\[
\cH^{n-1}(B_\rho(x) \cap \{u=s\}) \leq C \rho^{n-1}
\]
for any $s\in\RR$, $x \in K$, $\rho \leq \rho_0=\rho_0(M,g,K,u)$ where $C=C(M,g,K,u)$. Furthermore
\[
\dim_\textnormal{Hau} (\{u=s,|\nabla u|=0\}) \leq n-2
\]
for any $s \in \RR$. 
\end{prop}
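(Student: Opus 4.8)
The statement is a standard fact about nodal sets of harmonic functions, and the plan is to reduce it to the cited works \cite{HardtSimon,Lin:nodal,CheegerNaberValtorta}. First I would fix $s$ and pass to $v=u-s$, which is harmonic; then $\{u=s\}$ is the nodal set $\{v=0\}$ and $\{u=s,|\nabla u|=0\}$ is its singular part $\{v=0\}\cap\{|\nabla v|=0\}$. If $v$ vanishes identically near a point of $K$ both assertions are trivial there, so I may assume $v\not\equiv 0$ near each point of $K$; this non-degeneracy is exactly what makes the frequency function below finite.

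For the $\cH^{n-1}$ bound, the key tool is Almgren's frequency function: for $x\in K$ and small $r$, set
\[
N(x,r)=\frac{r\int_{B_r(x)}|\nabla v|^2}{\int_{\partial B_r(x)}v^2}.
\]
On a smooth Riemannian manifold $r\mapsto e^{\Lambda r}N(x,r)$ is monotone nondecreasing for $r\le\rho_0(M,g,K)$, with $\Lambda,\rho_0$ depending only on the local geometry (Garofalo--Lin); the exponential factor absorbs the curvature and the deviation of $g$ from the Euclidean metric in normal coordinates. Using compactness of $K$ together with a uniform upper bound for $N(x,r_1)$ at one fixed scale $r_1$ --- controlled by $\|u\|_{C^1}$ near $K$ and a positive lower bound for $\int_{\partial B_{r_1}(x)}v^2$ (this is where $v\not\equiv 0$ enters) --- I would deduce a uniform frequency bound $N(x,r)\le N_0=N_0(M,g,K,u)$ for all $x\in K$, $r\le\rho_0$. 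Standard arguments then give the doubling estimate $\int_{\partial B_{2r}(x)}v^2\le 2^{C(N_0+1)}\int_{\partial B_r(x)}v^2$, and the $(n-1)$-dimensional Hausdorff (indeed Minkowski) bound $\cH^{n-1}(B_\rho(x)\cap\{v=0\})\le C\rho^{n-1}$ with $C=C(N_0,M,g)$ is precisely the theorem of Hardt--Simon (analytic case) and Lin, in the quantitative form of Cheeger--Naber--Valtorta; I would simply invoke these references, the constant depending on $(M,g,K,u)$ only through $N_0$ and the geometry.

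For the dimension bound on the singular set I would run Federer's dimension reduction on blow-ups. At a point $x_0\in\{v=0\}\cap\{|\nabla v|=0\}$, the frequency-normalized rescalings of $v$ converge subsequentially to a nonzero homogeneous harmonic polynomial $P$ on $\RR^n$ of degree $d=N(x_0,0^+)$; since $\nabla v(x_0)=0$ the leading Taylor part of $v$ at $x_0$ has order at least two, so $d\ge 2$. The singular set $\{P=0\}\cap\{\nabla P=0\}$ of such a polynomial is a proper cone of dimension at most $n-2$: its zero set is $(n-1)$-dimensional and $P$ is non-constant on each of its components, so deleting the critical locus strictly drops the dimension. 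The usual stratification argument then transfers this bound to $\{v=0\}\cap\{|\nabla v|=0\}$, yielding $\dim_{\textnormal{Hau}}(\{u=s,|\nabla u|=0\})\le n-2$.

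The main obstacle, were one to reproduce rather than cite these facts, lies in the first part: establishing the uniform doubling over the compact set $K$ (the frequency monotonicity for general smooth metrics needs care with the curvature error terms and with the non-degeneracy input $v\not\equiv 0$), and then the Cheeger--Naber--Valtorta covering argument upgrading doubling to the effective $(n-1)$-dimensional volume estimate. The dimension bound on the singular set is comparatively soft once the frequency machinery is in place.
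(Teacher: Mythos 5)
Your proposal is correct in substance and follows essentially the same route as the paper: the paper offers no proof of Proposition \ref{prop:crit.harm} beyond citing Hardt--Simon, Lin, and Cheeger--Naber--Valtorta, and your frequency-function/doubling reduction (with uniformity over $K$ and $s$ coming from compactness and $v\not\equiv 0$) plus Federer dimension reduction for the critical set is precisely the standard content of those references. One small caveat: if $v=u-s$ vanished identically near a point, then by unique continuation $u\equiv s$ on that component and both conclusions would fail rather than being trivial, so the non-constancy of $u$ is an implicit hypothesis --- harmless here, since the paper applies the proposition to a non-constant Green's function.
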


\section{The Schoen--Simon--Yau $L^3$ estimate}\label{app:L3}
Schoen--Simon--Yau have obtained $L^p$ estimates for the second fundamental form of a stable minimal hypersurface $M^n \to \RR^{n+1}$ in \cite{SSY}. The following $L^3$ estimate follows from their arguments in a straightforward manner but we could not find it in the literature (the estimate from \cite{SSY} is only stated for $p\geq 4$) so we include a proof. 
\begin{prop}\label{prop:SSY.L3}
For $n < 8$, if $M^n \to\RR^{n+1}$ a complete, connected, two-sided, stable minimal immersion then there is $C=C(n)$ so that
\[
\int_{M} |A_{M}|^{3} f^{3} \leq C \int_{M}  |\nabla f|^{3} .
\] 
for any $f\in C^{0,1}_{0}(M)$. 
\end{prop}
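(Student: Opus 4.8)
The plan is to derive the $L^3$ estimate from the standard Schoen--Simon--Yau machinery, where the starting point is the stability inequality tested against $|A_M|^{1+q} f^{1+q}$ for a suitable exponent $q$, combined with the Simons-type inequality
\[
|A_M| \Delta |A_M| \geq -|A_M|^4 + \left(1 + \tfrac{2}{n}\right)|\nabla |A_M||^2
\]
valid on a minimal hypersurface (away from the zero set of $A_M$, with the usual care taken there). The idea is that plugging $f = |A_M|^{1+q}\varphi^{1+q}$ into \eqref{eq:stable}, integrating by parts using the Simons inequality, and applying Young's inequality yields, for $q$ in the range dictated by the dimension,
\[
\int_M |A_M|^{2q+2}\varphi^{2q+2} \leq C(n,q)\int_M |A_M|^{2q}\varphi^{2q}|\nabla \varphi|^2,
\]
provided $q < \sqrt{2/n}$ (this is exactly the Schoen--Simon--Yau condition, which for $n<8$ allows $q$ up to slightly less than $1/2$; the case $n=3$ is very comfortable). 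Taking $q$ close to $1/2$ gives an estimate controlling $\int_M |A_M|^{3-\epsilon}\varphi^{3-\epsilon}$ by $\int_M |A_M|^{1-\epsilon}\varphi^{1-\epsilon}|\nabla\varphi|^2$.

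Next I would convert this ``reverse Hölder'' type bound with $\varphi$-weights into the clean statement with $f$. The standard trick (as in \cite{SSY}) is to run the above with $\varphi$ replaced by appropriate powers and then use Hölder's inequality to absorb the $|A_M|$-weight on the right-hand side: writing $\int_M |A_M|^{2q}\varphi^{2q}|\nabla\varphi|^2 \leq \left(\int_M |A_M|^{2q+2}\varphi^{2q+2}\right)^{\frac{q}{q+1}}\left(\int_M (\varphi^{q}|\nabla\varphi|^{q+1})^{\frac{2(q+1)}{q+1-q\cdot 0}}\dots\right)$, one absorbs the first factor into the left side (it is finite because $f$ has compact support and $M$ is smooth), obtaining
\[
\int_M |A_M|^{2q+2}\varphi^{2q+2} \leq C\int_M |\nabla \varphi|^{2q+2}.
\]
Finally, setting $\varphi = f$ and letting $q \nearrow 1/2$ (legitimate since $n=3 < 8$ and the constants stay bounded, or alternatively just taking $q=1/2$ directly when $n<8$ since then $1/2 < \sqrt{2/n}$) gives precisely
\[
\int_M |A_M|^3 f^3 \leq C(n)\int_M |\nabla f|^3,
\]
which is the claim; a density argument extends it from $C^\infty_c$ to $C^{0,1}_c$.

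The main obstacle I anticipate is the careful handling of the set $\{A_M = 0\}$ where $|A_M|$ fails to be smooth and the Simons inequality must be interpreted distributionally — this is routine but needs the standard Schoen--Simon--Yau regularization (e.g. working with $(|A_M|^2 + \delta)^{1/2}$ and sending $\delta \to 0$, using Kato's inequality), exactly as in the regularizations already performed in Propositions \ref{prop:stern.bochner.stable} and \ref{prop:MW-monotonicity}. A secondary technical point is verifying that the exponent arithmetic goes through for $q$ up to $1/2$ when $n = 3$ (and more generally $n < 8$), keeping the constant $C(n,q)$ bounded as $q$ approaches the endpoint; since the critical Schoen--Simon--Yau threshold is $q < \sqrt{2/n}$ and $\sqrt{2/3} > 1/2$, there is a definite gap and no delicate limiting is actually needed — one simply picks $q = 1/2$. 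The rest is bookkeeping with Young's and Hölder's inequalities.
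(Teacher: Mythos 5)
Your proposal follows essentially the same route as the paper's appendix proof: test the stability inequality against an $|A_M|^{1/2}$-weighted cutoff, use the Simons identity together with the improved Kato inequality (regularizing with $(|A_M|^2+\delta)^{1/2}$ near $\{A_M=0\}$), and finish with a H\"older absorption, the hypothesis $n<8$ entering precisely through positivity of the resulting coefficient. The only correction needed is in the exponent bookkeeping: with the test function $|A_M|^{1+q}\varphi^{1+q}$ the stability inequality produces $\int_M |A_M|^{4+2q}\varphi^{2+2q}$ on the left-hand side, so the $L^3$ estimate corresponds to $q=-\tfrac12$ with the Schoen--Simon--Yau admissibility condition $q^2<\tfrac2n$ (equivalently $n<8$), rather than to the displayed inequality with exponent $2q+2$ and threshold $q<\sqrt{2/n}$; once the weight is taken to be $|A_M|^{1/2}$ (i.e.\ the exponents are made consistent), the computation and the final absorption go through exactly as you describe.
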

\begin{proof}
Set 
\[
a_\delta = (|A_M|^2+\delta)^{\frac 14}.
\]
For $f \in C^\infty_0(M)$ we can take $a_\delta f$ in the stability inequality to find 
\begin{align}
& \int_M |A_M|^2 a_\delta^2 f^2 \nonumber \\
& \leq \int_M |\nabla a_\delta|^2f^2 + f \bangle{\nabla a_\delta^2,\nabla f} + a_\delta^2 |\nabla f|^2 \nonumber\\
& = \int_M \tfrac {1}{16} a_\delta^{-6} |\nabla |A_M|^2|^2 f^2 + \tfrac 12 f a_{\delta}^{-2} \bangle{\nabla |A_{M}|^{2},\nabla f} + a_\delta^2 |\nabla f|^2\label{eq:stab.L3}
\end{align}
Multiplying Simons identity \cite{Simons}
\[
|\nabla A_M|^2 = \tfrac 12 \Delta |A_M|^2 + |A_M|^4
\]
by $a_\delta^{-2}f^2$ and integrating by parts we find 
\begin{align*}
& \int_M a_\delta^{-2} |\nabla A_M|^2 f^2 \\
&= \int_M \tfrac 12 a_\delta^{-2} f^2 \Delta |A_M|^2 + a_\delta^{-2} |A_M|^4 f^2\\
& = \int_M  \tfrac 14   a_\delta^{-6} |\nabla |A_M|^2|^2f^2 - a_\delta^{-2} f \bangle{\nabla  f, \nabla |A_M|^2} + a_\delta^{-2} |A_M|^4 f^2
\end{align*}
so
\begin{align*}
& \int_M a_\delta^{-2} ( |\nabla A_M|^2 f^2 -  \tfrac 14   a_\delta^{-4} |\nabla |A_M|^2|^2f^2)  \\
& = \int_M  - a_{\delta}^{-2} f \bangle{\nabla  f, \nabla |A_{M}|^{2}}  + a_\delta^{-2} |A_M|^4 f^2.
\end{align*}
Note that the improved Kato inequality reads
\[
\tfrac 14 (1+\tfrac 2n) |\nabla |A_M|^{2}| \leq |A_M|^{2} |\nabla A_{M}|^{2} \leq a_{\delta}^{4}|\nabla A_{M}|^{2}
\]
so we can rearrange this into
\begin{align*}
& \int_M \tfrac{1}{2n}  a_\delta^{-6}  |\nabla |A_M|^{2}|  f^2  \leq \int_M   - a_{\delta}^{-2} f \bangle{\nabla  f, \nabla |A_{M}|^{2}}  + a_\delta^{-2} |A_M|^4 f^2.
\end{align*}
Because $a_{\delta}^{-2}\leq |A_{M}|^{-1}$ we find
\begin{align*}
& \int_M \tfrac{1}{2n}  a_\delta^{-6}  |\nabla |A_M|^{2}|  f^2  \leq \int_M   -  a_{\delta}^{-2} f \bangle{\nabla  f, \nabla |A_{M}|^{2}}  + |A_M|^3 f^2,
\end{align*}
so we can combine this with stability to write
\begin{align*}
& \int_M  \tfrac{8-n}{16n}  a_\delta^{-6}  |\nabla |A_M|^{2}|  f^2 \leq \int_M - \tfrac 12 a_{\delta}^{-2} f \bangle{\nabla f,\nabla |A_{M}|^{2}} + a_\delta^2 |\nabla f|^2
\end{align*}
Because $n<8$ we can use Cauchy--Schwartz to find $C=C(n)$ so that 
 \begin{equation}\label{eq:L3.conc1}
 \int_M  a_\delta^{-6}  |\nabla |A_M|^{2}|  f^2 \leq C \int_M  a_\delta^2 |\nabla f|^2
\end{equation}
On the other hand, we can use Cauchy--Schwartz on \eqref{eq:stab.L3} to find 
\begin{equation}\label{eq:L3.conc2}
 \int_M |A_M|^2 a_\delta^2 f^2 \leq \int_M \tfrac {17}{16} a_\delta^{-6} |\nabla |A_M|^2|^2 f^2  + \tfrac{17}{16} a_\delta^2 |\nabla f|^2.
\end{equation}
Combining \eqref{eq:L3.conc1} and \eqref{eq:L3.conc2} and taking the limit as $\delta\to 0$ (using the dominated convergence theorem) we find
\[
\int_{M} |A_{M}|^{3} f^{2} \leq C \int_{M} |A_{M}| |\nabla f|^{2} .
\] 
A standard argument shows that this holds for $f\in C^{0,1}_{c}(M)$. We can then replace $f$ by $f^{\frac 32}$ and using H\"older's inequality to conclude the proof. 
\end{proof}

\section{An extension to almost stable minimal hypersurfaces}\label{app:almost.stable}
\begin{theo}\label{theo:almost.stable}
	Let $\delta_s\in [0,\tfrac 13]$, and $M^3\to \RR^4$ be a complete, connected, two-sided, $\delta_s$-stable minimal immersion, that is,
	\[\int_M (1-\delta_s) |A_M|^2 f^2 \le \int_M |\nabla f|^2\]
	for any $f\in C_0^\infty(M)$. Then $M$ is flat.
\end{theo}

We do not know if the bound for $\delta_s$ assumed here (and in Proposition \ref{prop:almost.stable.tech} below) is sharp. 
The proof of Theorem \ref{theo:almost.stable} closely follows that of Theorem \ref{theo:bernstein}. Note that the condition $\delta_s\le \tfrac 13$ is required to carry out the Cao--Shen--Zhu proof of the one-endedness property for $\delta_s$-stable minimal hypersurfaces (Proposition \ref{prop:CSZ}), cf.\ \cite[Lemma 10.2]{Li:harmonic.lectures}. As such, by following the argument in Section \ref{sec:proofs} we can reduce the proof of Theorem \ref{theo:almost.stable} to the following result:
\begin{prop}\label{prop:almost.stable.tech}
Suppose that $M^3\to\RR^4$ is a complete, connected, simply connected, two-sided, one-ended, minimal immersion $M^3\to\RR^4$ that is $\delta_s$-stable for $\delta_s \in [0,\tfrac 12)$. Then $M$ is flat. 
\end{prop}

Indeed, one may trace through the proof of Corollary \ref{coro:cutoff} to show that the $\delta_s$-stability condition (with $\delta_s < \frac 3 4$) implies:
\begin{align*}
& \limsup_{\ell\searrow 0} \int_{\ell}^{t}   \cA(s)\, ds + t^2 \int_{t}^{1}  s^{-2}  \cA(s)\, ds   \leq O(t)  +  \frac {4}{3-4\delta_s} \int_{t}^{1} t^2 s^{-4} F(s) \, ds. 
\end{align*}
Since Proposition \ref{prop:MW-monotonicity} does not depend on stability, we can thus follow along the proof of Theorem \ref{theo:bernstein} to find
\[
F(t) \leq O(t^2) + \frac{1}{3-4\delta_s} t^3 \int_t^1 s^{-4} F(s) ds.
\]
The proof of $F(t) = O(t^2)$ given in Corollary \ref{coro:F.est.sharp} then carries over as long as $\delta_s < \frac 12$. Finally, the $L^3$-estimate in Proposition \ref{prop:SSY.L3} holds for stable $M^n\to \RR^{n+1}$ as long as $\delta_s < \frac {8-n}{ 8}$. Putting this together, Proposition \ref{prop:almost.stable.tech} (and thus Theorem \ref{theo:almost.stable}) follows. 

%
%


\bibliography{bib}
\bibliographystyle{amsplain}

\end{document}